


\documentclass[reqno, 12pt]{amsart} 


\addtolength{\textwidth}{3pc}
\addtolength{\textheight}{5pc}
\addtolength{\hoffset}{-1.5pc} 
\addtolength{\voffset}{-2.5pc} 




\usepackage{amsmath, amssymb} 
\usepackage{amscd}            
\usepackage{amsxtra}          
\usepackage{upref}            


\usepackage[mathscr]{eucal}





\theoremstyle{plain}

\newtheorem{theorem}{Theorem}
\newtheorem{proposition}[theorem]{Proposition}
\newtheorem{lemma}[theorem]{Lemma}
\newtheorem{corollary}[theorem]{Corollary}



\theoremstyle{remark}
\newtheorem{remark}[theorem]{Remark}
\newtheorem{example}[theorem]{Example}


\numberwithin{equation}{section}
\numberwithin{theorem}{section}
\newcommand{\be}%
  {\protect\setcounter{equation}{\value{subsubsection}}}  
\newcommand{\ee}%
  {\protect\setcounter{subsubsection}{\value{equation}}}



\DeclareMathAlphabet\BOONDOX{U}{rsfso}{m}{n}
\newcommand{\N}{{\mathbb N}}
\newcommand{\Z}{{\mathbb Z}}
\newcommand{\Q}{{\mathbb Q}}
\newcommand{\R}{{\mathbb R}}
\newcommand{\C}{{\mathbb C}}

\newcommand{\qp}{{\mathbb Q}_p}
\newcommand{\aq}{{\mathbb A}_{\mathbb Q}}
\newcommand{\ak}{{\mathbb A}_K}

\newcommand{\sltwor}{{\rm SL}_2(\R)}

\newcommand{\glnqv}{{\rm GL}_n({\mathbb Q}_v)}
\newcommand{\glniqv}{{\rm GL}_{n_i}({\mathbb Q}_v)}

\newcommand{\gltwog}{{\rm GL}_2}
\newcommand{\gltwor}{{\rm GL}_2(\R)}
\newcommand{\glthreeg}{{\rm GL}_3}
\newcommand{\glthreer}{{\glthreeg(\R)}}
\newcommand{\gltwoqp}{{\rm GL}_2(\qp)}
\newcommand{\glone}{{\rm GL}_1({\mathbb A}_{\mathbb Q})}
\newcommand{\gltwo}{{\rm GL}_2({\mathbb A}_{\mathbb Q})}
\newcommand{\glthree}{{\rm GL}_3({\mathbb A}_{\mathbb Q})}

\newcommand{\gln}{{\rm GL}_n({\mathbb A}_{\mathbb Q})}

\newcommand{\glntwo}{{\rm GL}_{n-2}({\mathbb A}_{\mathbb Q})}
\newcommand{\glnak}{{\rm GL}_n(\ak)}

\newcommand{\lieg}{{\mathfrak g}}
\newcommand{\ug}{{\mathcal U}}
\newcommand{\liegltwo}{{\mathfrak{gl}}_2(\R)}

\newcommand{\aselp}{{\mathfrak A}}
\newcommand{\asel}{{\aselp}^{\#}}

\newcommand{\aseld}{\asel_d}

\newcommand{\gselp}{\mathfrak{G}}
\newcommand{\gsel}{{\gselp}^{\#}}

\newcommand{\temprep}{\BOONDOX{T}}
\newcommand{\temprepn}{\BOONDOX{T}_n}

\newcommand{\sel}{\mathcal{S}}
\newcommand{\esel}{\BOONDOX{S}^{\#}}
\newcommand{\seld}{\sel_d}
\newcommand{\eseld}{\esel_d}

\newcommand{\ft}[2]{\BOONDOX{F}_{#1}#2}

\newcommand{\aut}{{\mathcal A}^{\circ}}

\newcommand{\autone}{{\mathcal A}_1^{\circ}}

\newcommand{\auttwo}{{\mathcal A}_2^{\circ}}
\newcommand{\auttwoall}{{\mathcal A}_2}
\newcommand{\autthree}{{\mathcal A}_3^{\circ}}
\newcommand{\autthreeall}{{\mathcal A}_3}

\newcommand{\autn}{{\mathcal A}_n^{\circ}}
\newcommand{\autnall}{{\mathcal A}_n}
\newcommand{\autnoneall}{{\mathcal A}_{n-1}}
\newcommand{\autnone}{\autnoneall^{\circ}}
\newcommand{\autntwoall}{{\mathcal A}_{n-2}}
\newcommand{\autnthreeall}{{\mathcal A}_{n-3}}

\newcommand{\autni}{\autniall^{\circ}}
\newcommand{\autniall}{{\mathcal A}_{n_i}}

\newcommand{\whit}{{\mathcal W}}

\newcommand{\reals}{\rm{Re}(s)}

\DeclareMathOperator{\sgn}{sgn}
\DeclareMathOperator{\re}{{\rm{Re}}}

\DeclareMathOperator{\norm}{\|\,\|}
\DeclareMathOperator{\res}{{\rm{Res}}}
\DeclareMathOperator{\sym}{Sym}

\usepackage{scalerel,stackengine}
\stackMath
\newcommand\reallywidecheck[1]{%
\savestack{\tmpbox}{\stretchto{%
  \scaleto{%
    \scalerel*[\widthof{\ensuremath{#1}}]{\kern-.6pt\bigwedge\kern-.6pt}%
    {\rule[-\textheight/2]{1ex}{\textheight}}
  }{\textheight}%
}{0.5ex}}%
\stackon[1pt]{#1}{\scalebox{-1}{\tmpbox}}%
}


\begin{document}


\title{Quotients of $L$-functions: degrees $n$ and $n-2$}
\author{Ravi ~Raghunathan}
\address{Department of Mathematics \\ 
         Indian Institute of Technology Bombay\\
         Mumbai,\enspace  400076\\ India}
\email{raviraghunathan@iitb.ac.in}
\subjclass[2020]{11F66, 11M41, 11F70}
\keywords{automorphic and Artin $L$-functions, Selberg class, converse theorems, primitivity and zeros of cuspidal $L$-functions}

\begin{abstract} 
If $L(s,\pi)$ and $L(s,\rho)$ are the Dirichlet series attached to cuspidal automorphic representations $\pi$ and $\rho$ of ${\rm GL}_n({\mathbb A}_{\mathbb Q})$ and ${\rm GL}_{n-2}({\mathbb A}_{\mathbb Q})$ respectively, we show that $F_2(s)=L(s,\pi)/L(s,\rho)$ has infinitely many poles.
 We also establish analogous results for Artin $L$-functions and other $L$-functions not yet proven to be automorphic. Using the classification theorems of \cite{Ragh20} and \cite{BaRa20}, we show that cuspidal $L$-functions of ${\rm GL}_3({\mathbb A}_{\mathbb Q})$ are primitive in ${\mathfrak G}$, a monoid that contains both the Selberg class ${\mathcal{S}}$ and $L(s,\sigma)$ for all unitary cuspidal automorphic representations $\sigma$ of ${\rm GL}_n({\mathbb A}_{\mathbb Q})$.

\end{abstract}
\vskip 0.2cm

\maketitle


\markboth{RAVI RAGHUNATHAN}{Quotients of $L$-functions}


\section{Introduction}\label{introduction}

Let $\aq$ denote the ad\`eles over $\Q$. We denote by
$\autn$ the set of isomorphism classes of unitary cuspidal automorphic representations of $\gln$ and set $\aut=\bigsqcup_n\autn$. Let $\temprepn$ be the isomorphism classes of isobaric sums of elements of $\autni$ such that $\sum_i n_i=n$.
For $\sigma\in \temprep=\bigsqcup_n\temprepn$, let $L(s,\sigma)$ be the (incomplete) $L$-function attached to $\sigma$, and $L(s,\sigma_{\infty})$ its archimedean $L$-factor.  

\begin{theorem} \label{zerothm} Suppose $\pi\in \autn$ and $\rho\in\temprep_{n-2}$. The function $F_2(s)=L(s,\pi)/L(s,\rho)$ has infinitely many poles. Further, if $G_2(s)=L(s,\pi_{\infty})/L(s,\rho_{\infty})$ has a finite number of zeros, $F_2(s)$ has infinitely many poles in $0<\re(s)<1$.
\end{theorem}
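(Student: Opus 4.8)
The plan is to argue by contradiction: suppose $F_2(s) = L(s,\pi)/L(s,\rho)$ has only finitely many poles. Since $L(s,\rho)$ is (up to finitely many Euler factors) a product of $L$-functions of unitary cuspidal representations, its zeros and poles are controlled; in particular $L(s,\rho)$ has at most a pole of some known order at $s=1$ (coming from the trivial representation if it appears as an isobaric constituent) and is otherwise holomorphic and nonvanishing in $\re(s) \geq 1$. Hence if $F_2(s)$ has finitely many poles, then $L(s,\pi)$ divided by $L(s,\rho)$ extends to a meromorphic function with only finitely many poles, and $F_2(s)$ together with its conjectured functional equation makes it look like an element of a Selberg-type class of degree $2$ (the degrees subtract: $\deg L(s,\pi) = n$, $\deg L(s,\rho) = n-2$). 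The strategy is then to feed $F_2(s)$, or rather a completed version $\Lambda_2(s) = G_2(s) F_2(s)$ incorporating the archimedean factors, into a converse-theorem / classification argument. First I would verify that $F_2(s)$ satisfies the analytic hypotheses needed: boundedness in vertical strips (away from the finitely many poles), a functional equation of the right shape relating $s$ to $1-s$ (obtained by dividing the functional equations of $L(s,\pi)$ and $L(s,\rho)$), and the Dirichlet series / Euler product structure inherited from $\pi$ and $\rho$.

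Next I would invoke the classification results quoted from \cite{Ragh20} and \cite{BaRa20}. The key point is that a ratio of two $L$-functions behaving like a degree-$2$ element with finitely many poles is heavily constrained; the classification of low-degree elements in the relevant monoid $\gselp$ forces $F_2(s)$ to be, essentially, a product of at most two degree-$1$ Dirichlet $L$-functions (Hecke characters) or a single degree-$2$ cuspidal $L$-function, possibly times a ratio of finitely many Euler factors. In either case one derives that $L(s,\pi) = L(s,\rho) \cdot F_2(s)$ becomes an isobaric product of cuspidal representations of smaller rank --- contradicting the cuspidality (hence primitivity, or at least the genuine rank-$n$ nature) of $\pi$. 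More concretely: if $F_2$ were entire of degree $2$, then $L(s,\pi) = L(s,\rho)F_2(s)$ would exhibit $\pi$ as having an $L$-function that factors, contradicting that $L(s,\pi)$ for cuspidal $\pi$ on $\mathrm{GL}_n$ does not factor nontrivially within this class (a Rankin--Selberg / Jacquet--Shalika non-vanishing input, or the primitivity statement the paper itself is building toward). The ``infinitely many poles'' conclusion is the contrapositive.

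For the second, sharper statement --- that when $G_2(s) = L(s,\pi_\infty)/L(s,\rho_\infty)$ has only finitely many zeros, the poles of $F_2(s)$ accumulate in the critical strip $0 < \re(s) < 1$ --- I would localize the previous argument. Outside the closed strip, $L(s,\pi)$ and $L(s,\rho)$ are given by absolutely convergent Euler products (for $\re(s) > 1$) and are nonvanishing there, so $F_2$ can only have poles there coming from zeros of $L(s,\rho)$, of which there are none in $\re(s) > 1$; by the functional equation the region $\re(s) < 0$ is likewise controlled, with any poles there forced to match (via the $\Gamma$-factors $G_2$) zeros or poles of the archimedean factors. The hypothesis that $G_2$ has finitely many zeros removes the possibility of infinitely many ``trivial'' poles of $F_2$ at the shifted negative integers coming from $\Gamma$-function zeros --- wait, $\Gamma$ has no zeros, so the relevant trivial poles of $F_2$ in $\re(s) < 0$ come from zeros of $L(s,\rho_\infty)$ in the denominator, i.e. from poles of $1/L(s,\rho_\infty)$; the hypothesis on $G_2$ precisely says these are finite in number. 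Hence all but finitely many of the infinitely many poles guaranteed by the first part must lie in $0 \leq \re(s) \leq 1$, and a short additional argument (using nonvanishing of $L(s,\pi)$ and $L(s,\rho)$ on $\re(s) = 1$, à la the prime number theorem, and the functional equation to handle $\re(s)=0$) pushes them into the open strip.

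I expect the main obstacle to be the first part --- establishing rigorously that ``finitely many poles'' forces $F_2(s)$ into the scope of the classification theorems, i.e. checking that the completed ratio genuinely satisfies all the axioms (especially the growth/boundedness conditions and the precise form of the functional equation with a consistent conductor and root number) so that \cite{Ragh20} and \cite{BaRa20} apply. A secondary subtlety is handling the finitely many ``bad'' Euler factors where $\pi$ and $\rho$ ramify, which perturb the Dirichlet series coefficients and the Euler product locally; these need to be absorbed into a harmless finite product that does not affect the pole count or the degree bookkeeping. Once the ratio is legitimately placed in the degree-$2$ part of the monoid $\gselp$, the classification does the rest and the contradiction with cuspidality of $\pi$ on $\mathrm{GL}_n$ (with $n \geq n-2+2$, so the degree genuinely drops by exactly $2$) is immediate.
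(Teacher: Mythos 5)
Your high-level framing is reasonable (assume finitely many poles, show $F_2$ lives in a degree-$2$ Selberg-type class, derive a contradiction), but the proposed central step --- ``invoke the classification results quoted from \cite{Ragh20} and \cite{BaRa20}'' to force $F_2$ to be a product of at most two $\gltwog$-type $L$-functions --- does not work, because those theorems only classify elements of degree strictly less than $2$ (degrees $0$, $(0,1)$, $1$, and $(1,2)$). There is no classification of $\gselp_2$ in either reference, and indeed classifying degree-$2$ elements of a Selberg-type class is a hard open problem. So placing $F_2$ in $\gselp_2$ and factoring it into primitives tells you only that it is either a product of two degree-$1$ Dirichlet $L$-functions or a single primitive degree-$2$ element, and the latter could \emph{a priori} be anything; nothing forces it to come from an automorphic representation of $\gltwo$.

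This is exactly the gap the paper's converse-theorem machinery is built to close. The actual argument does not try to classify $F_2$; instead it shows that under the finiteness-of-poles assumption, $F_2$ and all its primitive character twists satisfy the hypotheses of a Weil-type converse theorem for $\gltwog$. That requires several nontrivial inputs that your proposal skips over: a Booker-type theorem (the paper's Theorem \ref{bookermodthm}) to show all twists $F_2(s,\chi)$ are entire in the critical strip; a careful analysis (Lemma \ref{correctgamma}) showing $G_2(s,\chi)$ equals a rational function times an honest archimedean $\gltwor$ $L$-factor; an archimedean test-vector/local-functional-equation argument (Theorem \ref{testvecforps}) to remove that rational factor from the global functional equation; and the Jacquet--Shalika stability of $\varepsilon$-factors under highly ramified twist to put the $\varepsilon$-factors into the shape the converse theorem demands. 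Only after the converse theorem yields $F_{2,S}(s) = L_S(s,\tau)$ for an actual $\tau \in \auttwoall$ can one write $L_S(s,\pi) = L_S(s,\rho)L_S(s,\tau)$ and apply Rankin--Selberg (twist by $\tilde\tau$, compare poles at $s=1$) --- that step needs $\tau$ to be automorphic, not merely a degree-$2$ Dirichlet series. Relatedly, your fallback ``or the primitivity statement the paper itself is building toward'' is circular: primitivity of $L(s,\pi)$ in $\gselp$ (Theorem \ref{primthm}) is deduced \emph{from} Theorem \ref{zerothm} together with the low-degree classifications, not the other way around.

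Your treatment of the second statement (pushing the poles into the open strip $0 < \re(s) < 1$) is essentially right and matches the paper's Corollary \ref{zerosincs}: once the first statement is in hand, poles of $F_2$ outside the strip can only come from zeros of $G_2$ by the functional equation (since $\Phi_2 = G_2 F_2$ is holomorphic in $\re(s) \le 0$), and the hypothesis bounds these.
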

Or, infinitely many zeros of $L(s,\rho)$ are not zeros of $L(s,\pi)$ when counted with multiplicity.
The automorphy of the individual $L$-functions in the quotient $F_2$ is not crucial. Theorem \ref{mostgeneral}, and its Corollaries \ref{artin} and \ref{tensprodzeros}, allow us to treat quotients of Artin and tensor product $L$-functions. Theorem \ref{symsquarebyzeta} gives an example of a situation where we can show that $F_2$ has infinitely many poles in $0<\re(s)<1$, even though $G_2$ has infinitely many zeros.

In \cite{Ragh20} 
we defined the set of Dirichlet series $\gselp$ axiomatically (see Section \ref{secdef} for the precise 
definition) which forms a monoid under multiplication,
and contains both the Selberg class $\sel$ and all the standard $L$-functions $L(s,\sigma)$ for 
$\sigma\in \aut$. By hypothesis, the elements of $\gselp$ have at most a finite number of poles in $\C$. We thus obtain the following corollary.
\begin{corollary} \label{generaln} With the notation of Theorem \ref{zerothm}, $F_2\notin \gselp$. 
\end{corollary}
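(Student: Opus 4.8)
The plan is to read this off Theorem \ref{zerothm} directly, using only the defining axioms of $\gselp$ from \cite{Ragh20}. Recall that, by hypothesis, every element of $\gselp$ is meromorphic on $\C$ and has at most a finite number of poles there (the pole set is required to be finite, in fact lying on $\re(s)=1$). So the argument I would give is by contradiction: suppose $F_2\in\gselp$; then $F_2$ has only finitely many poles in $\C$. But Theorem \ref{zerothm} asserts precisely that $F_2(s)=L(s,\pi)/L(s,\rho)$ has infinitely many poles. This contradiction forces $F_2\notin\gselp$.

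The one point worth spelling out is that the notion of ``pole'' appearing in Theorem \ref{zerothm} is the same one entering the axioms for $\gselp$. This is immediate: both $L(s,\pi)$ and $L(s,\rho)$ (for $\rho$ isobaric) extend meromorphically to $\C$, hence so does the quotient $F_2$, and the poles counted in Theorem \ref{zerothm}---those arising from zeros of $L(s,\rho)$ that are not cancelled, with multiplicity, by zeros of $L(s,\pi)$---are genuine poles of this meromorphic continuation. Thus the infinitude of poles is an intrinsic analytic property of $F_2$, incompatible with membership in any class whose elements are required to have finitely many poles.

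There is, in effect, no serious obstacle in the corollary itself: all the work is carried out in Theorem \ref{zerothm}, and the statement merely records that the conclusion of that theorem rules $F_2$ out of $\gselp$ (and hence, since $\sel\subset\gselp$ and $L(s,\sigma)\in\gselp$ for all $\sigma\in\aut$, out of the Selberg class and away from being the standard $L$-function of any isobaric automorphic representation).
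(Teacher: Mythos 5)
Your argument is essentially identical to the paper's: axiom (P2) forces any member of $\gselp$ to have only finitely many poles in $\C$, so the infinitude of poles asserted in Theorem \ref{zerothm} immediately excludes $F_2$ from $\gselp$. One small inaccuracy in your parenthetical: (P2) does not require the poles to lie on $\re(s)=1$ --- that stronger restriction (via $P(s)=(s-1)^m$) belongs to the extended Selberg class $\esel$, not to $\gselp$ --- but this has no bearing on the argument.
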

The classification of elements of $\gselp$ of degrees less than $2$ in \cite{Ragh20,BaRa20}, together with the corollary above, yields
\begin{theorem}\label{primthm} If $\pi$ is in $\autthree$, $L(s,\pi)$ is primitive in $\gselp$.
\end{theorem}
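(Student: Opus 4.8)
The plan is to run the standard degree argument for primitivity, now available inside the monoid $\gselp$: a hypothetical factorization of $L(s,\pi)$ will be forced to contain a factor of degree $1$, the classification will identify that factor as the standard $L$-function of an element of $\temprep_1$, and the complementary factor will be shown not to lie in $\gselp$ by Theorem \ref{zerothm}. To set up, recall from Section \ref{secdef} that $\gselp$ is a monoid with identity the constant function $1$, that each $F \in \gselp$ has a degree $d_F \ge 0$ which is additive, $d_{F_1F_2} = d_{F_1} + d_{F_2}$, that the only element of degree $0$ is $1$, and that $L(s,\sigma) \in \gselp$ with $d_{L(s,\sigma)} = n$ for every $\sigma \in \temprep_n$. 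In particular $L(s,\pi) \in \gselp$ with $d_{L(s,\pi)} = 3$, so $L(s,\pi) \neq 1$ and it is legitimate to ask whether it is primitive.

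Suppose then, for contradiction, that $L(s,\pi) = F_1 F_2$ with $F_1, F_2 \in \gselp$ and $F_1 \neq 1 \neq F_2$. Additivity gives $d_{F_1} + d_{F_2} = 3$, so one of the factors, say $F_1$, satisfies $d_{F_1} \le 3/2 < 2$. At this point I would appeal to the classification of the elements of $\gselp$ of degree less than $2$ obtained in \cite{Ragh20} and \cite{BaRa20}: the degree spectrum contains no value in $(0,1)\cup(1,2)$, and every element of degree exactly $1$ is of the form $L(s,\rho)$ for some $\rho \in \temprep_1 = \autone$ — concretely, a Dirichlet $L$-function $L(s+i\theta,\chi)$ with $\chi$ a primitive Dirichlet character and $\theta \in \R$, which is the standard $L$-function of the unitary Hecke character attached to $\chi$ twisted by $|\cdot|^{i\theta}$. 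Since $F_1 \neq 1$ we have $d_{F_1} \neq 0$; together with $0 \le d_{F_1} < 2$ and the gap in the spectrum this forces $d_{F_1} = 1$, hence $d_{F_2} = 2$.

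Thus $F_1 = L(s,\rho)$ with $\rho \in \temprep_1$, and $F_2 = L(s,\pi)/L(s,\rho)$. Since $\pi \in \autthree$ and $\rho \in \temprep_1$, Theorem \ref{zerothm} applies — taking $n = 3$, so that $\temprep_1 = \temprep_{n-2}$ — and shows that $F_2$ has infinitely many poles; as every element of $\gselp$ has only finitely many poles in $\C$, we conclude $F_2 \notin \gselp$, which is precisely the statement of Corollary \ref{generaln}. This contradicts $F_2 \in \gselp$, so $L(s,\pi)$ admits no nontrivial factorization in $\gselp$ and is therefore primitive.

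Almost everything here is quoted rather than proved, so the only points that actually require care are: (i) that $d_{L(s,\pi)} = 3$ and that degree is additive in $\gselp$, both immediate from the definitions in Section \ref{secdef}; (ii) that the low-degree classification really does forbid every degree in $(0,2)$ except $1$, so that $3 = d_{F_1} + d_{F_2}$ leaves only $\{d_{F_1},d_{F_2}\} = \{0,3\}$ or $\{1,2\}$, the first being the trivial factorization; and (iii) that a degree-$1$ element of $\gselp$, which the classification exhibits as a shifted Dirichlet $L$-function, genuinely falls within the scope of Theorem \ref{zerothm}, i.e. is $L(s,\rho)$ for some $\rho \in \temprep_1$. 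The last point is the only one that is not purely formal, and it is immediate from the dictionary between primitive Dirichlet characters and unitary Hecke characters. I do not anticipate any real obstacle in the primitivity deduction itself; the substance of the theorem is carried entirely by Theorem \ref{zerothm} and the degree-$<2$ classification.
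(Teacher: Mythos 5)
Your proof is correct and follows essentially the same route as the paper: both arguments reduce, via the degree-additivity and the classification of $\gselp$ in degrees $0$, $(0,1)$, $1$, and $(1,2)$, to the existence of a degree-one factor $L(s,\rho)$ with $\rho\in\autone$, and then invoke Corollary~\ref{generaln} (i.e.\ Theorem~\ref{zerothm} with $n=3$) on the cofactor $L(s,\pi)/L(s,\rho)$. The only cosmetic difference is that the paper first invokes Theorem~\ref{factorisation} to obtain a factorization into primitives and then enumerates the possible degree partitions $1{+}1{+}1$ and $1{+}2$, whereas you work directly with a hypothetical two-factor decomposition $F_1F_2$, which is a slight streamlining but not a different proof.
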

Recall that an element $F\ne 1$ in $\gselp$ is primitive if 
$F=F_0F_1$, for $F_0,F_1\in \gselp$,  implies $F_0=1$ or $F_1=1$ ($1$ is the only unit in $\gselp$). Theorem \ref{primthm} is the first instance of establishing the primitivity of $L$-functions of degree greater than $2$. 

In some important cases it is known that $L(s,\pi)$ with $\pi\in \autthree$ belongs to the Selberg class $\sel\subseteq \gselp$. In these cases Theorem \ref{primthm} yields the following corollaries, since primitivity in $\gselp$ implies primitivity in $\sel$.
In the corollaries that follow, it is known that $L(s,\pi)\in \sel$, so the primitivity results in $\sel$ follow from Theorem \ref{primthm}
\begin{corollary}\label{symsquare} Suppose that $\pi$ is is the symmetric square lift
of a representation $\sigma$ in $\auttwo$, where $\sigma$ arises in one of the following ways.
\begin{enumerate}
\item The representation $\sigma$ is associated to a holomorphic cuspidal eigenform of any
congruence subgroup of ${\rm{SL}}_2(\Z)$ and is not dihedral.
\item The representation $\sigma$ is associated to a Maass cuspidal eigenform 
of the full modular group ${\rm SL}_2(\Z)$.
\end{enumerate}
Then $L(s,\pi)$ is primitive in $\sel$.
\end{corollary}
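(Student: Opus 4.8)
The plan is to obtain the corollary as a formal consequence of Theorem \ref{primthm}, resting on two observations: that primitivity in $\gselp$ is automatically inherited by any submonoid containing the element in question, and that in each of the listed cases the function $L(s,\pi)$ genuinely lies in $\sel$. The second point is a matter of citing known automorphy and analytic results rather than of proof, so the real content is already in Theorem \ref{primthm}.

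First I would record the descent of primitivity. Suppose $L(s,\pi)=F_0F_1$ with $F_0,F_1\in\sel$. Since $\sel$ is a submonoid of $\gselp$, this is also a factorization inside $\gselp$; as $L(s,\pi)\in\autthree$ has degree $3$ and is in particular not the constant function $1$, Theorem \ref{primthm} forces $F_0=1$ or $F_1=1$. Because $1\in\sel$ and $1$ is the only unit of $\sel$, this is precisely the assertion that $L(s,\pi)$ is primitive in $\sel$. No property of $\sel$ beyond the inclusion $\sel\subseteq\gselp$ is used here.

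It then remains to explain why $L(s,\pi)\in\sel$ in cases (1) and (2). Write $\pi=\sym^2\sigma$. By Gelbart--Jacquet (and, in the holomorphic case, Shimura), $\sym^2\sigma$ is a unitary cuspidal automorphic representation of $\mathrm{GL}_3(\aq)$ exactly when $\sigma$ is not dihedral: this is the hypothesis imposed in (1), and it is automatic in (2), since a dihedral Maass form is automorphically induced from a Hecke character of a real quadratic field and hence has level greater than $1$. Cuspidality on $\mathrm{GL}_3$ supplies the Euler product, the entire continuation, and the functional equation, with an archimedean factor of the Selberg-class shape read off from the known component $\sigma_\infty$ (the temperedness at infinity, needed so that the gamma-factor parameters have nonnegative real part, holds because Selberg's eigenvalue conjecture is known for the full modular group). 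The last Selberg-class axiom to check is the bound on the Dirichlet coefficients of $L(s,\pi)$: in case (1) it is Deligne's Ramanujan--Petersson bound for holomorphic cuspidal eigenforms (trivial for weight one, where the Hecke eigenvalues are sums of roots of unity), and in case (2) it is the estimate available in the literature for the symmetric square of a level-one Maass form. Combining this with the first step yields the corollary.

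I do not anticipate a genuine obstacle: all of the substance has already been invested in Theorem \ref{primthm} and, upstream, in the classification results of \cite{Ragh20} and \cite{BaRa20} and in Corollary \ref{generaln}. The only step requiring attention is the bookkeeping in the paragraph above — verifying that each $\sigma$ on the list really does produce a symmetric square whose $L$-function satisfies every defining condition of $\sel$ — and this is a matter of assembling the correct automorphic and analytic references rather than of fresh argument.
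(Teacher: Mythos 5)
Your proposal is correct and follows the same route as the paper: Theorem \ref{primthm} gives primitivity in $\gselp$, and since $\sel\subseteq\gselp$ with $1$ the only unit, primitivity descends to $\sel$ once one knows $L(s,\pi)\in\sel$. The paper simply asserts this membership in $\sel$ in the sentence preceding the corollary; your paragraph assembling the automorphy (Gelbart--Jacquet), the non-dihedral verification, the Deligne/Deligne--Serre bounds in case (1), and the Selberg eigenvalue theorem in case (2) supplies the references the paper leaves implicit.
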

Apart from those covered in the corollary above, there is one other important case.
\begin{corollary}\label{cubiclift} Let $K$ be a cubic extension of $\Q$ and let 
$\chi$ be an id\`ele class character of $\ak$ whose cubic lift lies in 
$\autthree$. Then $L(s,\chi)$ is primitive in $\sel$.
\end{corollary}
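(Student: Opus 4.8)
The plan is to deduce this directly from Theorem \ref{primthm}. Let $K/\Q$ be the cubic extension and $\chi$ the idèle class character of $\ak$ whose cubic lift (automorphic induction from $\ak^\times$ to $\gltwo$... actually to $\glthree$) is the cuspidal representation $\pi\in\autthree$. By construction, the finite part of $L(s,\chi)$, the Hecke $L$-function over $K$, coincides with $L(s,\pi)$ up to the archimedean factors which do not affect the Dirichlet series; in particular $L(s,\chi)$ \emph{is} (the incomplete Dirichlet series of) an element of $\autthree$. Hence the first step is simply to record that the hypothesis ``the cubic lift of $\chi$ lies in $\autthree$'' means precisely that $L(s,\chi)=L(s,\pi)$ for some $\pi\in\autthree$, so that Theorem \ref{primthm} applies verbatim: $L(s,\pi)$ is primitive in $\gselp$.

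Next I would invoke the inclusion $\sel\subseteq\gselp$, established in \cite{Ragh20}, together with the elementary observation that primitivity in a larger monoid implies primitivity in a sub-monoid: if $F\in\sel$ and $F=F_0F_1$ with $F_0,F_1\in\sel\subseteq\gselp$, then by primitivity in $\gselp$ one of $F_0,F_1$ equals $1$. Thus the second step is to check that $L(s,\chi)$ actually lies in $\sel$, not merely in $\gselp$ --- this is the classical fact (going back to Hecke, and well known for automorphic inductions from solvable extensions) that Hecke $L$-functions of idèle class characters over number fields satisfy the functional equation, Euler product, Ramanujan bound and growth conditions defining the Selberg class. With $L(s,\chi)\in\sel$ and primitivity in $\gselp$ in hand, primitivity in $\sel$ follows immediately.

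I expect the only genuine subtlety to be bookkeeping about archimedean factors and the distinction between the completed and incomplete $L$-functions: one must make sure that the ``cubic lift lies in $\autthree$'' hypothesis is exactly what licenses replacing $L(s,\chi)$ by $L(s,\pi)$ in the sense relevant to $\gselp$ (which is defined in terms of the Dirichlet series, with archimedean data entering only through the axioms of \cite{Ragh20}). Once that identification is made, there is no further work: the corollary is a formal consequence of Theorem \ref{primthm} plus $\sel\subseteq\gselp$. The main obstacle, such as it is, is therefore not analytic but definitional --- confirming that the normalisations in \cite{Ragh20,BaRa20} and in the theory of automorphic induction are compatible, which I would dispatch with a sentence citing the relevant conventions.
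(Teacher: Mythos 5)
Your proposal is correct and follows exactly the paper's intended argument: the cubic lift hypothesis identifies $L(s,\chi)$ with $L(s,\pi)$ for some $\pi\in\autthree$, Theorem \ref{primthm} gives primitivity in $\gselp$, and since Hecke $L$-functions of id\`ele class characters are classically known to lie in $\sel\subseteq\gselp$, primitivity descends to $\sel$. The paper itself dispatches this corollary with precisely this observation (stated just before the corollaries), so there is nothing to compare.
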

The conclusion of Corollary \ref{symsquare} 
is valid for the $L$-functions of Mass cuspidal eigenforms forms of congruence subgroups of small level for which the Selberg Eigenvalue conjecture has been proved (see \cite{BoSt2007} and \cite{BLS2020}, for instance). It is also valid for the $L$-functions of a large class of regular algebraic cuspidal Galois representations for which the $L$-functions are, once again, known to lie in $\sel$.
We also note that we can use the classification results of Kaczorowski and Perelli in \cite{KaPe99} and \cite{KaPe03} for the class $\sel$ (for degrees less that $2/3$) together with Corollary \ref{generaln} to prove the two corollaries above. Thus, they are not dependent on the more general results of \cite{Ragh20} and \cite{BaRa20}.

Prior to the theorems in this paper, primitivity was only know for the $L$-functions $L(s,\pi)$ with $\pi$ in $\autnone$ or $\auttwo$ (see Theorem 6.3 of \cite{Ragh20}).
In these cases the primitivity results are almost direct consequences 
of the theorems in \cite{Ragh20} classifying elements of small degree 
in $\asel$, a class of Dirichlet series containing $\gselp$ (the corresponding theorems for the Selberg class $\sel$ were proved in 
\cite{KaPe99}). In 1996, Ram Murty (and more recently, A. Ivic in 2010) asked me if I could prove that the symmetric square $L$-function of the Ramanujan cusp form was primitive in $\sel$. Corollary \ref{symsquare} of this paper answers the question affirmatively, in the even larger class 
$\gselp$ and, in fact, for all $\pi\in \autthree$.

Theorems \ref{zerothm} and  \ref{mostgeneral} generalise the previous results in three directions. First, there is no restriction on the  conductors of the representations $\pi$ and $\rho$. Second, the theorems apply to the case when $G_2(s)=L(s,\pi_{\infty})/L(s,\rho_{\infty})=R(s)L(s,\tau_{\infty})$, where $\tau_{\infty}$ is a unitary irreducible representation of $\gltwor$ and $R(s)$ is any rational function. In fact, we prove that the assumption that $G_2$ has finitely many zeros implies that it must have this latter form. 
Finally, there is no restricition on the degree $n$.
The theorems of \cite{Ragh99} and \cite{Ragh10} required $\pi$ and $\rho$ to have the same conductors and required $R(s)=1$.
The two theorems also vastly generalise the Theorem of Neurrer and Oliver in \cite{NeOl2020}, where $n=3$, $\pi$ is a symmetric square lift from $GL_2$ and $\rho$ is the trivial character, and a similar result of Hochfilzer and Oliver in \cite{HoOl2022} for Artin $L$-functions. For $\pi\in \autn$ and $\rho\in \autnone$, Booker proved an analogous  result in \cite{Booker2015}, and this was improved upon in \cite{Ragh20}.

In Theorem \ref{mostgeneral}, which is valid for a large number of quotients of $L$-functions which are not known individually to be automorphic,  we will actually prove  that if $F_2$ has a finite number of poles in $\C$ it must be quasi-automorphic, that is, there exist a (unitary)  automorphic representation $\tau\in \auttwoall$ such that we have an equality of partial $L$-functions $F_{2,S}(s)=L_S(s,\tau)$, for a large enough set of finite places $S$. In many cases, including those involving Artin $L$-functions or the tensor product $L$-functions, this will contradict the irreducibility or the pairwise distinctness of these $L$-functions.
For the standard automorphic $L$-functions, this distinctness is a consequence of the theory of Rankin-Selberg convolutions developed by Jacquet, Piatetski-Shapiro and Shalika and, more specifically, the results of Jacquet and Shalika in \cite{JaSh811}. 
For other $L$-functions, we are sometimes able to reduce the situation to the automorphic case, or impose additional conditions to get the desired outcomes.

Recall that one way of proving the automorphy of Dirichlet series is to use
a {\it converse theorem}, that is, a theorem that asserts that a Dirichlet series is automorphic if it satisfies certain analytic properties and satsfies suitable functional equations. Our strategy for proving the automorphy of $F_2$ is to show that if $F_2\in \gselp$ , it necessarily satisfies the hypotheses of the celebrated {\it converse theorem} of Weil (\cite{Weil67}), or its analogues, which we will refer to collectively as $\gltwog$-type converse theorems.

If the function $G_2$ has infinitely many zeros, Theorem \ref{zerothm} follows almost immediately, so we can assume that $G_2$ has only finitely many zeros. {\it A priori}, $G_2$ does not appear to have the form of the factors that arise in $\gltwog$-type converse theorems, where there are no gamma functions in the denominator. An  elementary but careful analysis shows that the factors in the denominator actually cancel with those in the numerator leaving the correct form upto a factor of a rational function. Removal of this factor requires a choice of a suitable test vector for the archimedean integrals associated to the representations of $\gltwor$,
and an invocation of the local functional equation for these integrals. This is one of the main ways in which our proof improves on earlier work. 

A second technical difficulty is overcome using a modification of Booker's theorem in \cite{Booker2003} so that it holds in the class $\gselp$ (and, in fact, in the even larger class $\gsel$): crucially we allow quotients of the usual gamma factors to appear in the functional equation. This will allow us to treat character twists of Dirichlet series of degree 2 satsifying even more general functional equations than the classical Hecke and Maass functional equations (and our treatment gives a uniform proof across different cases; we do not follow the more recent \cite{BFL2022}). 

We should note that a stronger version of Theorem \ref{bookermodthm} of this paper will show that $F_2$ has infinitely many poles in the critical strip without assuming the condition that $G_2$ has finitely many zeros. Our primary aim in this paper was to prove that the $L$-functions of cuspidal automorphic representations of $\glthree$ are primitive for which we require only the weaker Corollary \ref{generaln}. The strengthening of Theorem \ref{bookermodthm} is work in progress.

In addition to the above, we will need to use the stability results for gamma factors proved by Jacquet and Shalika in \cite{JaSh85} in order control the $\varepsilon$-factors that occur in the functional equations of the character twists of $F_2(s)$, as well as a number of other inputs from the theory of automorphic $L$-functions and their integral representations. We will discuss these issues in further detail in Section \ref{org} where we outline the strategy of the proof.

Finally, we note that our techniques are unlikely to generalise further without significant breakthroughs in at least two directions. For instance, for a pair $(\pi,\rho)$ where $\pi\in \autn$ and $\rho\in \autnthreeall$, we would require the analogue of Booker's theorem in \cite{Booker2003} for $L$-functions of degree $3$, which so far seems to be outside the scope of our current technology. If the difference in degrees between $L(s,\pi)$ and $L(s,\sigma)$ is greater than $3$, there is simply no analogue of Weil's converse theorem available which guarantees the automorphy of the $L$-function from the knowledge of only the character twists. Even when the difference in the degrees is exactly three, we will need an improvement on the converse theorem of Jacquet, Piatetski-Shapiro and Shalika  formulated for the $L$-functions of admissible representations. These have Euler factors at each prime $p$ which are reciprocals of polynomials in $p^{-s}$, but our methods require a theorem for series of degree $3$ that may not have this property.

\section{The strategy of the proof and organisation of the paper}\label{org}

We give an outline of our proof of Theorem \ref{zerothm} and also describe the structure of the paper.

In Section \ref{secdef}, we give the precise definitions
of the various classes of Dirichlet series $\sel$, $\esel$, $\gselp$, $\gsel$, $\aselp$, and $\asel$ defined axiomatically in the spirit of the Selberg class $\sel$, and other variants that appear in this paper. 

Assume now that $F_2(s)=L(s,\pi)/L(s,\rho)=\sum_{n=1}^{\infty}a_nn^{-s}\in \gselp$. 
Given the definition of $F_2$, this amounts to assuming that $F_2$ has at most a finite number of poles in $\C$. All the other conditions for membership in $\gselp$ are easily seen to be satisfied because of the properties of $L(s,\pi)$ and $L(s,\rho)$.
If $G_2(s)=L(s,\pi_{\infty})/L(s,\rho_{\infty})$ has infinitely many zeros, it is easy to see that $F_2$ has (infinitely many) poles at those zeros, so we focus on the case when $G_2$ has only finitely many zeros. As mentioned in the introduction, the main task is to establish that the function $F_2$ satisfies the hypotheses of the converse theorem of Weil, or that of its analogues for $L$-functions having archimedean factors arising from principal series representations of $\gltwor$ (recall that the series in Weil's theorem are assumed to have archimedean factors that arise from (limits of) discrete series representations of $\gltwor$). We will refer to the archimedean factors in the two cases together as ``$\gltwog$-type" factors and to the corresponding converse theorems as ``$\gltwog$-type" (converse) theorems. We emphasise that these converse theorems do not require that the Dirichlet series in question have Euler products.

We first show that the twisted series $F_2(s,\chi)=\sum_{n=1}^{\infty}\chi(n)a_nn^{-s}$ which are {\it a priori} only meromorphic are actually entire for all primitive Dirichlet characters $\chi$, a necessary input for the converse theorem. This is carried out in Section \ref{secbookanalog} using the ideas of Booker \cite{Booker2003} but in the more general context of the class $\gsel$. This has the virtue of giving both a more general result as well as avoiding case by case analyses. 

In Section \ref{secautomorph} we describe the local and global $L$-functions and 
associated to automorphic representations of $\gln$. We also record many of the foundational theorems for these $L$-functions primarily due to Jacquet, Piatetski-Shapiro and Shalika via the method of integral representations, Shahidi, Kim-Shahidi and Gelbart-Shahidi using the Langlands-Shahidi method, and the work of Moeglin-Waldspurger which combines both approaches. These allow us to conclude that $L(s,\pi)\in \gselp$ if $\pi\in \temprepn$ and, in fact, satisfy the more restrictive form of the functional equation \eqref{autlfneqn}. We obtain slightly weaker conclusions for the tensor product, exterior square and symmetric square $L$-functions, which will nonetheless suffice for our purposes.

As mentioned above, the $\gltwog$-type converse theorem demands that the archimedean factors $G_2(s,\chi)$ that appear in the functional equation of $F_2(s,\chi)$ should be the archimedean $L$-functions associated to an irreducible admissible representation of $\gltwor$, but {\it a priori}, we can only conclude that the $G(s,\chi)$ are quotients of the gamma functions defined in \eqref{archlfn} and \eqref{twistedarchlfn}. 
The passage from such quotients to the $\gltwog$-type gamma factors is done in two stages which we describe below (recall that we are assuming that $G_2$ has finitely many zeros!).

In Lemma \ref{correctgamma} of Section \ref{redtogltwo} we show that this is true upto a factor of a rational function, that is, if $F_2\in \gselp$, $G(s,\chi_{\infty})=R(s,\chi_{\infty})L(s,\tau_{\infty}\times\chi_{\infty})$ for some irreducible admissible representation $\tau_{\infty}$ of $\gltwor$ and some rational function $R(s,\chi_{\infty})$. 
The arguments involve elementary but careful analyses of the poles of the relevant gamma functions. Then, in Theorem \ref{testvecforps} of Section \ref{archgltwo}, we analyse the archimedean local integrals associated to irreducible admissible representations $\tau_{\infty}$ of $\gltwor$, and show that by choosing suitable test vectors for these integrals we can obtain the factor $P(s,\chi_{\infty})L(s,\sigma_{\infty}\times\chi_{\infty})$ for any polynomial $P(s,\chi_{\infty})$. The (archimedean) local functional equation then allows us to eliminate the factor $R(s,\chi_{\infty})$ from the global functional equation.

Finally, the converse theorem of $\gltwog$-type requires a degree of control on the $\varepsilon$-factors that appear in the functional equation of $F_2(s,\chi)$. This
technical problem can be overcome by twisting $F_2(s)$ by a character $\chi_0$ that is sufficiently highly ramified at a set of finite places (depending on $\pi$ and $\rho$), and then using a stability result due to Jacquet and Shalika for $\varepsilon$-factors. This process is described in Section \ref{secepsilon} and guarantees that the  $\varepsilon$-factors are those that appear in the hypotheses of the converse theorem.

In Section \ref{conversesec} we state the precise $\gltwog$-type converse theorem that we will use. We also state Theorem \ref{mostgeneral} and complete its proof, an easy task, since all the ingredients are already in place. Theorem \ref{zerothm} follows almost immediately. 

 Section \ref{secprim} is devoted to the proof of Theorem\ref{primthm} and its corollaries after reviewing our results from\cite{Ragh20} and \cite{BaRa20} classifying the elements of $\gsel$ of degrees less than $2$. This concludes the main part of the paper.
 
 The remaining sections deal with the applications and extensions of Theorem \ref{zerothm}.
 Section \ref{zerosets} discusses pairs of $L$-functions of symmetric powers as concrete instances of Theorem \ref{zerothm}
 when $G_2$ has finitely many zeros, and how these results improve
 on existing ones. Section \ref{relaxaut} treats Artin $L$-functions
 and other examples of $L$-functions which are not known to be
 automorphic. Finally, in Section \ref{symsquarebyzeta}, we are 
 able to show that $F_2(s)=L(s,\sym^2\tau)/\zeta(s)$ has infinitely
 many poles in the critical strip for $\tau\in \auttwo$, with 
 $\tau_{\infty}$ a discrete series representation of $\gltwor$ of
 even weight. This case is interesting because $G_2$ has infinitely
 many zeros, so it does not satisfy the hypothesis of the second part of Theorem \ref{zerothm}. Nonetheless, the conclusion holds.

\section{Various axiomatically defined classes of Dirichlet series}
\label{secdef}
We recall the definitions of the various classes of Dirichlet series
that appear in this paper. Let $s=\sigma+it\in \C$ and let $F(s)$ be a non-zero meromorphic
function on $\C$. 
We consider the following conditions on $F(s)$.
\begin{enumerate}
\item [(P1)] The function $F(s)$ is given by a
Dirichlet series $\sum_{n=1}^{\infty}\frac{a_n}{n^{s}}$ 
with abscissa of absolute convergence $\sigma_a\ge 1/2$.
\item[(P2)] There is a polynomial $P(s)$ such that $P(s)F(s)$ 
extends to an entire function, and such that in  any
vertical strip $\sigma_1\le \sigma\le \sigma_2$, 
\[
P(s)F(s)\ll e^{|t|^{\rho}}
\]
for some $\rho>0$.
\item[(P3)] There exist a real number $Q>0$, a complex number $\omega$, 
and a function $G(s)$ of the form
\begin{equation}\label{gammafactor}
G(s)=\prod_{j=1}^{r}\Gamma(\lambda_j s+\mu_j)\prod_{j'=1}^{r'}\Gamma(\lambda_{j^{\prime}}^{\prime}s+\mu_{j^{\prime}}^{\prime})^{-1},
\end{equation}
where $\lambda_j, \lambda_{j^{\prime}}^{\prime}\in \R_{>0}$, $\mu_j, \mu_{j^{\prime}}^{\prime}\in \C$, 
and $\Gamma(s)$ denotes the usual gamma function, such that
\begin{equation}\label{fnaleqn}
\Phi(s):=Q^{s}G(s)F(s)=\omega\overline{\Phi(1-\bar{s})}.
\end{equation}
\item[(P4)] The function $F(s)$ can be expressed as a product
$F(s)=\prod_p F_p(s)$, where 
\begin{equation}\label{prod}
\log F_p(s)=\sum_{k=1}^{\infty} \frac{b_{p^k}}{p^{ks}}
\end{equation}
with $\vert b_{p^k}\vert \le Cp^{k\theta}$ for some $\theta>0$ and some
constant $C>0$.
\end{enumerate}
In \cite{Ragh20} we introduced the monoid (under multiplication of functions) of non-zero meromorphic functions $F$ 
satisfying (P1)-(P3) and denoted it by $\asel$. The monoid  of functions $F\in \asel$ also satisfying (P4) is called $\aselp$.

We have also previously introduced the submonoid $\gselp\subset \aselp$ (see \cite{Ragh20}) consisting of Dirichlet series satisfying the following additional properties.
\begin{enumerate}
\item[(G1)] The abscissa of absolute convergence satisfies $\sigma_a\le 1$.
\item[(G3)] The archimedean parameters satisfy the bounds
\begin{equation}\label{gselbound}
\re\left(-\frac{\mu_j}{\lambda_j}\right), 
\re\left(-\frac{\mu_{j^{\prime}}^{\prime}}{\lambda_{j^{\prime}}^{\prime}}\right)<\frac{1}{2}, \,\,1\le j\le r,\,\,1\le j^{\prime}\le r^{\prime}.
\end{equation}
\item[(G4)] The non-archimedean parameters satisfy $\theta<1/2$.
\end{enumerate}
Thus, $F\in \gselp$ if $F$ satisfies (G1), (P2), (G3) and (G4).
Given $F(s)$ satisfying an equation of the form \eqref{fnaleqn}, we define
the degree of $F$ to be
$d_F=2(\sum_{j=1}^{r}\lambda_j-\sum_{j^{\prime}=1}^{r^{\prime}}\lambda_{j^{\prime}}^{\prime})$.
We recall Theorem 4.1 of \cite{Ragh20}.
\begin{theorem}\label{uniquegamma} 
If $F\in \asel$ satisfies 
\eqref{fnaleqn} for two functions
$G^{(1)}(s)$ and $G^{(2)}(s)$ of the form given by \eqref{gammafactor},
let $d^{(1)}=2\sum_{i=1}^{{r}_i}\lambda_i^{(1)}$ and
 $d^{(2)}=2\sum_{i=1}^{{r}_i}\lambda_i^{(2)}$. Then
 $d^{(1)}=d^{(2)}$.  If we further assume that $F\in \gsel$,  $G_1(s)=cG_2(s)$ for some constant $c$. 
\end{theorem}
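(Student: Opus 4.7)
The plan is to split the statement into its two assertions and handle them independently. The first assertion, that the degree is well defined, follows from reading off $d$ from the asymptotic order of growth of $|F(\sigma+it)|$ deep in the left half-plane, where the functional equation forces a precise rate governed by Stirling's formula; the second follows by isolating the quotient $H(s)=G^{(1)}/G^{(2)}$, which eliminates $F$ and satisfies its own functional equation, and then using the normalization (G3) to show this quotient has no poles or zeros and is therefore essentially a constant.

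For part (i), I would rewrite \eqref{fnaleqn} as
$F(s) = \omega\,Q^{1-2s}\,\overline{G(1-\bar s)}\,G(s)^{-1}\,\overline{F(1-\bar s)}.$
For $\re(s)>\sigma_a$ the Dirichlet series in (P1) converges absolutely, so $F(\sigma+it)$ is uniformly bounded; after pulling out a leading term one may further assume that $|F(\sigma+it)|$ is bounded away from $0$ for $\sigma$ sufficiently large, hence $|F(1-\bar s)|\asymp 1$ for $\sigma$ sufficiently small. Stirling's asymptotic $|\Gamma(x+iy)|\sim\sqrt{2\pi}|y|^{x-1/2}e^{-\pi|y|/2}$ applied separately to the $\Gamma$-factors in $G(s)$ and $G(1-\bar s)$ causes the exponential decay factors to cancel (they depend on $\lambda_j$ and $|t|$ but not on $\sigma$), leaving $|G(1-\bar s)/G(s)|\sim C_\sigma\,|t|^{(1-2\sigma)(\sum\lambda_j-\sum\lambda'_{j'})}$. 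Combining these facts, $|F(\sigma+it)|\asymp |t|^{(1-2\sigma)d/2}$ as $|t|\to\infty$ in a deep left half-plane, where $d=2(\sum\lambda_j-\sum\lambda'_{j'})$. Since the function $F$ is the same for both functional equations, the exponent of $|t|$ is an invariant of $F$ alone, forcing $d^{(1)}=d^{(2)}$.

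For part (ii), set $H(s)=G^{(1)}(s)/G^{(2)}(s)$, $q=Q_1/Q_2>0$, and $c_0=\omega_1/\omega_2$. Dividing the two functional equations cancels the $F$-factors and produces the self-contained equation $H(s)=c_0\,q^{1-2s}\,\overline{H(1-\bar s)}$. Under (G3), every factor $\Gamma(\lambda_j s+\mu_j)$ in $G^{(1)}$ or $G^{(2)}$ has its poles at $s=-(\mu_j+n)/\lambda_j$, whose real parts are at most $\re(-\mu_j/\lambda_j)<1/2$; the same holds for the factors appearing as $\Gamma(\lambda'_{j'}s+\mu'_{j'})^{-1}$ (which contribute zeros). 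Hence every pole and every zero of $H$ lies strictly in the open half-plane $\re(s)<1/2$. But the reflection $s\mapsto 1-\bar s$ carries this region into $\re(s)>1/2$, so the poles and zeros of $\overline{H(1-\bar s)}$ all lie strictly to the right of $\re(s)=1/2$. Since $q^{1-2s}$ is nowhere zero or singular, the functional equation forces these two disjoint sets to coincide, so $H$ is entire and nowhere vanishing. Being a ratio of $\Gamma$-functions, $H$ has order at most one, so Hadamard's factorization theorem gives $H(s)=Ce^{as}$ for constants $C$ and $a$.

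The \emph{main obstacle} is the last step: eliminating the linear term by showing $a=0$. A non-zero $a$ would reflect a genuine multiplication-formula relation of the type $\prod_{k=0}^{n-1}\Gamma(\lambda s+\mu+k/n)=c\cdot n^{-n\lambda s}\,\Gamma(n\lambda s+n\mu)$, which a priori allows the same completed function $Q^sG(s)F(s)$ to be split into $\Gamma$-factors in genuinely distinct ways with $Q_1\ne Q_2$. Re-inserting $H=Ce^{as}$ into the functional equation pins down $\re a=-\log q$, so the task reduces to ruling out this scenario. Here I would use the sharper $\gsel$-axioms — in particular (G1), which imposes $\sigma_a\le 1$ and therefore, together with the growth analysis of part (i), constrains the admissible value of $d$ — together with the Dirichlet series representation of $F$, to show that the coarse matching of Dirichlet coefficients with the asymptotic profile from (i) leaves no room for a non-trivial exponential. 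This forces $a=0$, so $H=C$ and $G^{(1)}(s)=CG^{(2)}(s)$, completing the proof.
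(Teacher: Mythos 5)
Your part (i) and the first half of part (ii) are sound: the Stirling growth comparison on a far-left vertical line does pin down the degree, and the reduction to $H(s)=G^{(1)}(s)/G^{(2)}(s)$ with $H(s)=c_0q^{1-2s}\overline{H(1-\bar s)}$, the confinement of all zeros and poles of $H$ to $\re(s)<1/2$ via (G3), and the conclusion $H(s)=Ce^{as}$ (the ``order at most one'' claim needs a word, e.g.\ a minimum--modulus estimate or direct Stirling bounds away from the poles, but this is routine) are exactly the right reductions. The genuine gap is the step you yourself flag, and it is worse than unfinished: in the generality you set up, where the two functional equations may carry different constants $Q_1\neq Q_2$, the assertion $a=0$ is \emph{false}, so no argument via (G1) or matching of Dirichlet coefficients can close it. Concretely, Legendre duplication gives $\Gamma\bigl(\tfrac{s+\kappa}{2}\bigr)\Gamma\bigl(\tfrac{s+\kappa+1}{2}\bigr)=\sqrt{\pi}\,2^{1-\kappa}\,2^{-s}\,\Gamma(s+\kappa)$, and both sides are admissible factors of the form \eqref{gammafactor} whose shifts satisfy the (G3) bound; absorbing the positive constant into $\omega$ and the $2^{-s}$ into $Q$, the very same $F$ (for instance $\zeta(s)$, using $\Gamma(s/2)=\sqrt{\pi}\,2^{1-s}\,\Gamma(s)\,\Gamma((s+1)/2)^{-1}$, or any $F$ with a $\Gamma_{\C}$-factor) satisfies \eqref{fnaleqn} for two (G3)-admissible gamma data whose ratio is a genuine exponential. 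So the uniqueness can only hold for the completed factor $Q^{s}G(s)$ up to a nonzero constant --- equivalently for $G$ itself once the same $Q$ is used in both equations --- and your plan aims at a stronger statement than is true. (The paper does not reprove Theorem \ref{uniquegamma}; it quotes Theorem 4.1 of \cite{Ragh20}, so the comparison here is with the expected argument rather than a proof in the text.)

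Under the fixed-$Q$ reading the remaining work is also not where you place it, and the tools you propose are not the relevant ones. With $q=1$ your own computation gives $\re a=0$, so $H(s)=Ce^{i\beta s}$ with $\beta\in\R$, and this is killed by a two-line growth comparison that only uses what you already established in part (i): on a fixed vertical line, Stirling gives $|H(\sigma+it)|\asymp_{\sigma}|t|^{A(\sigma)}$ as $|t|\to\infty$, because the exponential factors $e^{-\pi\lambda_j|t|/2}$ from $G^{(1)}$ and $G^{(2)}$ cancel precisely since $d^{(1)}=d^{(2)}$; on the other hand $|Ce^{i\beta s}|=|C|e^{-\beta t}$ is exponentially large or small in $t$ unless $\beta=0$. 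Run with general $q$, the same comparison shows $\im a=0$, hence $H(s)=C(Q_2/Q_1)^{s}$, i.e.\ $Q_1^{s}G^{(1)}(s)=C\,Q_2^{s}G^{(2)}(s)$, which is the sharp form of the second assertion and reduces to $G^{(1)}=cG^{(2)}$ exactly when $Q_1=Q_2$. So: drop the proposed appeal to (G1) and to coarse Dirichlet-coefficient matching, replace it by this Stirling comparison on a vertical line, and state the conclusion either with $Q$ held fixed or for $Q^{s}G(s)$.
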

The theorem says that $d_F$ does not depend on the choice of $G(s)$ in equation \eqref{fnaleqn}. Thus, the notation $\aseld$ for set of series $F$ in $\asel$ with $d_F=d$ is justified.

We recall that the extended Selberg class $\esel$ consists of the series in $\asel$ satisfying
$\sigma_{a}\le 1$, $P(s)=(s-1)^m$ for some integer $m\ge 0$, $r^{\prime}=0$ and $\re(-\mu_j)\le 0$ for $1\le j\le r$. In particular, elements of $\esel$ satisfy (G1) and (G3). The subset of series in $\esel$ with an Euler product satisfying (P4) and (G4) is the Selberg class $\sel$. Thus, we see that $\sel\subset \gselp$. We denote by $\gsel_d$ (resp. $\gselp_d$, $\eseld$, $\seld$) the set of series in $\gsel$ (resp. $\gselp$, $\esel$, $\sel$) of degree $d$.

As we have remarked in \cite{Ragh20}, the conditions (G3) and (G4) impose a uniform bound of $1/2$ on both the archimedean and non-archimedean local parameters, perhaps making the class $\gselp$ a more aesthetically satisfying choice. By contrast, the Selberg class demands $\re(-\mu_j)\le 0$, but $\theta<1/2$. The condition (G3) ensures that there are no trivial zeros of $F$ on the critical line $\re(s)=1/2$.

For $F(s)\in \gselp$ and $S$ a finite set of primes, we define the partial $L$-function (or partial Dirichlet series) by 
\[
F_S(s)=\prod_{p\notin S}F_p(s).
\]
If $F(s)=\sum_{n=1}^{\infty}a_nn^{-s}$ and $\chi$ is a 
Dirichlet character, we define the twisted series
\begin{equation}\label{selchartwist}
F(s,\chi)=\sum_{n=1}^{\infty}\frac{\chi(n)a_n}{n^s}.
\end{equation}
We note that if $F(s)=F_1(s)F_2(s)$ is a product of two Dirichlet series,
\begin{equation}\label{twistprod}
F(s,\chi)=F_1(s,\chi)F_2(s,\chi).
\end{equation}
It is a conjecture of Selberg that the class $\sel$ is closed under twisting by 
Dirichlet characters. We expect that the same is true for the class $\gselp$ (in fact, one would conjecture
that $\gselp=\sel$). In what follows we may sometimes require $F(s)$ to satisfy the following property:
\begin{enumerate}
 \item[(G5)] For every non-principal Dirichlet character $\chi$, the series
$F(s,\chi)=\sum_{n=1}\frac{\chi(n)a_n}{n^s}$ extends to a meromorphic function on $\C$ and is holomorphic on the line $\re(s)=1$.
\end{enumerate}

We could have formulated our results somewhat more generally and avoided (to some extent) invoking (P4) (and (G4)), since many of the necessary inputs have been proved for the much larger class $\asel$. However, the sets $\esel_0$ and $\asel_0$ are quite large (and coincide), and this prevents a clean formulation of factorisation results. In contrast $\gselp_0=1$ (see Theorem \ref{gselpzero} of this paper), so we do not have to worry about degree zero elements.

The main (and conjecturally, the only) examples of Dirichlet series in the various classes above arise from automorphic $L$-functions. Let $\sigma$ be a unitary automorphic representation of $\glnak$, where $K$ is a number field, $\ak$ its ring of ad\`eles, and $m=[K:\Q]$ its degree. We will denote by $L(s,\sigma)$ the standard $L$-function associated to $\sigma$. As we will see in Theorem \ref{lautoing} of Subsection \ref{subsecgloball}, $L(s,\sigma)$ belongs to $\gselp$, and its degree is $mn$ (although Theorem \ref{lautoing} is stated only for $\sigma\in \autnall$, it is valid over number fields). This the main reason we choose to work in in $\gselp$. In contrast, since the Generalised Ramanujan Conjecture (at infinity) is very far from being proved for these $L$-functions, they are not known to belong to $\sel$. The only known examples are $L$-functions of Hecke characters of number fields, those arising in some way (e.g. as symmetric powers) from holomorphic cuspidal eigenforms (of any level) or Maass cuspidal eigenforms of small level, or those arising from sufficiently nice Galois representations.  We will discuss these  automorphic $L$-functions in greater detail in Section \ref{secautomorph}.

\section{An analogue of Booker's theorem for the class $\gselp$}\label{secbookanalog}

The aim of this section is to prove the following variant of the main theorem of \cite{Booker2003}.
\begin{theorem} \label{bookermodthm} Suppose that $F(s)\in \gsel_2$ and that it satisfies (G5).
Then $F(s,\chi)$ has no poles in $0<\re(s)\le 1$, if $\chi$ is not principal.
\end{theorem}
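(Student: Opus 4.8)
The plan is to adapt Booker's argument from \cite{Booker2003} to the class $\gsel_2$, the main new feature being that we must tolerate quotients of gamma functions in the functional equation \eqref{fnaleqn} rather than a product of gamma factors. Fix a non-principal Dirichlet character $\chi$ of conductor $q$. By (G5), $F(s,\chi)$ is meromorphic on $\C$ and holomorphic on $\re(s)=1$; we must push this holomorphy into the strip $0<\re(s)<1$. Since $F\in\gsel_2$ has degree $2$, its gamma factor $G(s)$ in \eqref{gammafactor} satisfies $2(\sum\lambda_j-\sum\lambda'_{j'})=2$, and the bounds (G3) on the shifted parameters $\re(-\mu_j/\lambda_j)<1/2$ (and the analogous bound for the primed parameters) guarantee that the poles of $G(s)$ all lie in $\re(s)<1/2$ and the poles of $G(s)^{-1}$, equivalently the zeros of the denominator gamma factors, also lie in $\re(s)<1/2$. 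The first step is therefore bookkeeping: write down the functional equation for the twist $F(s,\chi)$. As in the classical case, $F(s,\chi)$ satisfies a functional equation relating it to $\overline{F(1-\bar s,\bar\chi)}$ with a gamma factor $G(s,\chi)$ obtained from $G(s)$ by the usual shift $\mu_j\mapsto \mu_j$ with an extra $\Gamma$-type contribution from the conductor, times a power of $q$ and an $\varepsilon$-factor; crucially $G(s,\chi)$ is again a ratio of gamma functions whose numerator poles and whose denominator zeros lie in $\re(s)<1/2$, by (G3).

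The heart of the argument is the contour-integral/positivity device of Booker. One considers, for a suitable rapidly decaying test function or a suitable kernel, an integral transform of $F(s,\chi)$ along the line $\re(s)=1$, moves the contour to $\re(s)=0$ using the functional equation, and picks up residues exactly at the poles of $F(s,\chi)$ inside $0<\re(s)<1$. The input from (G1), namely $\sigma_a\le 1$, together with (P2) (polynomial-order growth of $P(s)F(s)$ in vertical strips, hence of $P(s,\chi)F(s,\chi)$) justifies the contour shift and the convergence of the relevant Mellin--Barnes integrals; this is where the degree-$2$ hypothesis enters quantitatively, since it pins down the total "size" $\sum\lambda_j-\sum\lambda'_{j'}=1$ of the archimedean data and hence the analytic conductor growth in $t$. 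One then arrives at an identity in which a quantity known to be holomorphic (coming from the $\re(s)=1$ and $\re(s)=0$ lines, where $F(\cdot,\chi)$ is holomorphic by (G5) and its functional-equation image) equals a sum over the putative poles in the open strip. The key technical point I would isolate is a \emph{rigidity} statement: the gamma factors appearing, being genuine ratios of $\Gamma$'s with parameters constrained by (G3) and the degree equal to $2$, cannot conspire to cancel the contributions of poles of $F(s,\chi)$; this is precisely the place where allowing denominator gamma factors (Booker's original setup does not) forces the extra care mentioned in the introduction, and where Theorem \ref{uniquegamma} is used to control $G(s)$ up to a constant.

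Concretely, the steps in order: (i) record the functional equation for $F(s,\chi)$ and verify, using (G3) and $d_F=2$, that all poles of the numerator gamma factor and all zeros of the denominator gamma factor of $G(s,\chi)$ lie strictly to the left of $\re(s)=1/2$, so that in $0<\re(s)<1$ the only possible poles of $\Phi(s,\chi)$ are those of $F(s,\chi)$ itself; (ii) set up Booker's integral identity by integrating $\Phi(s,\chi)$ against an appropriate test kernel over $\re(s)=1$ and shifting to $\re(s)=0$ via \eqref{fnaleqn}; (iii) use (P2) and (G1) to control growth and convergence, so that the shift is legitimate and the only new terms are residues at poles of $F(s,\chi)$ in the open strip; (iv) exploit the precise shape of the degree-$2$ archimedean factor — as in \cite{Booker2003}, one can choose the test kernel adapted to $G(s,\chi)$ so that the "line" contributions vanish identically or are manifestly pole-free — and conclude that each residue must vanish, hence $F(s,\chi)$ is holomorphic in $0<\re(s)<1$; the holomorphy on $\re(s)=1$ is (G5) and on $\re(s)=0$ follows by the functional equation from holomorphy on $\re(s)=1$ together with the location of the gamma poles/zeros.

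The main obstacle I expect is step (iv) together with the rigidity claim in step (i): ensuring that the presence of gamma functions in the \emph{denominator} of $G(s,\chi)$ does not introduce zeros of $\Phi(s,\chi)$ in $0<\re(s)<1$ that would allow spurious cancellation, and that Booker's choice of test kernel still works when $G$ is a ratio rather than a product. This is handled by the uniform bound $\re(-\mu'_{j'}/\lambda'_{j'})<1/2$ from (G3), which confines the bad behaviour of the denominator to $\re(s)<1/2$ where it cannot interfere, and by invoking the degree-$2$ constraint so that the archimedean factor has the same analytic size as a single $\mathrm{GL}_2$ gamma factor, making the Booker-type kernel construction go through essentially verbatim; the remaining work is the careful but routine estimation of the shifted integrals.
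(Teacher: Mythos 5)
Your instinct — adapt Booker's argument and isolate the place where denominator gamma factors could cause trouble — is the right one, and your verification in step~(i) that (G3) confines the poles of $G(s,\chi)$ and the zeros of its denominator to $\re(s)<1/2$ matches the paper. But the proposal has a genuine gap at exactly the spot you flag as the ``main obstacle,'' and the mechanism you reach for to fill it is not the right one.

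The missing idea is the reduction, via Stirling's formula, of the \emph{quotient} of gamma functions $G(s)$ to a single $\Gamma(s+\mu)$ up to an explicit asymptotic correction: the paper shows that for $F\in\gsel_2$ one can write $G(s)=DK'^{s}\Gamma(s+\mu)\bigl[1+c_1/(s+\mu)+\cdots+c_k/(s+\mu)^k+O(1/s^{k+1})\bigr]$ for suitable constants, and then replaces Booker's Lemma~3 by Proposition~\ref{replacelemthree}, which expresses $G(s)(s-1/2)^m$ as a finite sum $\sum_k b_k(2\pi)^{-s}\Gamma(s+k+\mu)$ plus a controlled error. Once that substitution is made, the rest of Booker's argument goes through essentially verbatim. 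Your proposal instead appeals to ``rigidity'' and to Theorem~\ref{uniquegamma}, but that theorem only asserts uniqueness of the gamma factor in the functional equation; it does nothing to tame a ratio of gammas so that Booker's Mellin-kernel device applies. Without the Stirling step there is no single $\Gamma$ against which to run the transform, and your step~(iv) (choosing a kernel ``adapted to $G(s,\chi)$ so that the line contributions vanish'') is not something Booker's method actually does.

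Relatedly, the shape of the contradiction is misdescribed. The paper's (and Booker's) argument does not show that residues vanish. One fixes a putative pole $\beta_0$ of $F(s,\chi)$ in the open strip, passes by character inversion to a pole of an additive twist $F(s,\alpha)$, and then considers the finite residue sum \eqref{resterm} over the poles $T$ of the \emph{untwisted} $G(s)F(s)$, written as a vertical line integral via the functional equation. Using Proposition~\ref{replacelemthree} and Booker's Lemmas~2 and~4, one derives an $\Omega_\varepsilon\bigl(\delta^{-\beta_0-m-\re\mu+\varepsilon}\bigr)$ lower bound as $\delta\to 0$, while the residue sum itself is $O(1)$ because $T$ is finite; the contradiction forces $\re(\beta_0)\le 0$, and (G5) handles the boundary line $\re(s)=1$. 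Your sketch omits the passage to additive twists entirely, replaces the $\Omega$-versus-$O(1)$ contradiction with a claim that residues vanish, and (a smaller point) misquotes (P2), which is $e^{|t|^\rho}$ growth, not polynomial growth. The high-level strategy is in the right direction, but as written the proposal does not supply the one new ingredient that makes the quotient case work.
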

We first explain how Theorem \ref{primthm} differs from the various avatars already present in the literature. 
The main (and only) theorem of \cite{Booker2003} is the special case of Theorem \ref{bookermodthm} for $L$-functions of two-dimensional Artin representations, that is, two-dimensional complex representations of the absolute Galois group of $\Q$. These give rise to Dirichlet series of degree $2$ with an Euler product and satisfying the functional equation \eqref{fnaleqn} with
$G(s)=\Gamma_{\R}(s+a)^2$, with $a\in \{0,1\}$  or
$G(s)=\Gamma_{\C}(s)$.
For our purposes however, we need a theorem that allows the factors $G(s)$ to be much more general. In particular, they should be allowed to have gamma functions in the denominators (that is, we allow for factors with $r^{\prime}>0$). We also
cannot assume that the Dirichlet series have Euler products. The theorems of \cite{BoKr2014} for $G(s)=\Gamma_{\C}\left(s+\frac{k-1}{2}\right)$ ($k\in \N$), and \cite{NeOl2020} for $\Gamma(s)$ corresponds to the equation for classical Maass forms avoid the assumption of an Euler product for $F(s)$.
However, these two cases do not account even for all the factors $G(s)$ that arise from autmorphic representations of $\gltwo$, for instance the case when
$G(s)=\Gamma_{\R}(s+it)\Gamma_{\R}(s+1-it)$, let alone quotients of such functions. In any event, Theorem \ref{bookermodthm} allows us to uniformly treat not only all factors that arise from automorphic representations of $\gltwo$, but the completely general $L$-factors $G(s)$ of degree $2$ that arise in the class $\gsel$.

We should acknowledge that the theorems of \cite{BoKr2014} and \cite{NeOl2020} do not require the holomorphy hypothesis made in (G5) and are sharper since they require twisting by a much smaller (but still, infinite) set of characters. 
They also do not require the condition (G1), but only the weaker (P1). We have not attempted to modify those proofs to suit our more general setting: our arguments would have become considerably longer without allowing any new applications. More recently, the authors of \cite{BFL2022} treat the Weil functional equation and generalise certain aspects to the Selberg class. The proof of Theorem \ref{bookermodthm} relies only on the older methods of \cite{Booker2003}.

Finally, we remark that the condition that $F(s,\chi)$ be holomorphic on the line $s=1$ will easily be seen to be satisfied when $F=F_2$, where $F_2$ is as in Theorem \ref{zerothm}. This is because $F_2$ is the quotient of $L(s,\pi)$ and $L(s,\rho)$. It is known that $L(s,\pi\times\chi)$ is holomorphic on the line $\re(s)=1$ by a result of \cite{GoJa72} (see Theorem \ref{sigmaponeptwo} of this paper), and that $L(s,\rho\times\chi)$ is non-vanishing for $\re(s)=1$ by \cite{JaSh1976} (see Theorem \ref{nonvanishthm}), whence (G5) follows.

\begin{proof}[Proof of Theorem \ref{bookermodthm}]
Note that $\sum_{j=1}^r\lambda_j-\sum_{j^{\prime}}^{r^{\prime}}\lambda^{\prime}_{j^{\prime}}=1$, since $F(s)$ is in $\gsel_2$. We set 
\[
D=(2\pi)^{\frac{r-r^{\prime}-1}{2}},\,\, K^{\prime}=\prod_{j,j^{\prime}=1}^{r,r^{\prime}}\lambda_{j}^{\lambda_j}{\lambda^{\prime}_{j^{\prime}}}^{-\lambda^{\prime}_{j^{\prime}}}\,\,\text{and}\,\,
\mu=\sum_{j,j^{\prime}=1}^{r,r^{\prime}}\mu_j-\mu^{\prime}_{j^{\prime}}+\frac{r-r^{\prime}-1}{2}.
\]
It is a straightforward consequence of Stirling's formula that for any $k\ge 0$
\begin{equation}\label{stirlingcons}
G(s)=D{K^{\prime}}^s\Gamma(s+\mu)\left[1+\frac{c_1}{(s+\mu)}+\cdots +\frac{c_k}{(s+\mu)^k}+O\left(\frac{1}{s^{k+1}}\right)\right],
\end{equation}
for suitable constants $c_k$. 
Following the proof of Lemma 3 of \cite{Booker2003}, we use the functional equation for the gamma function (repeatedly) in \eqref{stirlingcons} to
obtain the following proposition.
\begin{proposition}\label{replacelemthree} For any $n\le m$, 
there are constants $b_1,\ldots b_n, A\in \C$ and $B>0$ such that 
\[
 G(s)(s-1/2)^{m}=AB^s\left[\sum_{k=n}^{m}b_k(2\pi)^{-s}\Gamma(s+k+\mu)]+(2\pi)^{-s}\Gamma(s+n+\mu)E_n(s)\right],
 \]
where $E_n(s)=O(1/s)$.
\end{proposition}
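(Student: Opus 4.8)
The plan is to start from the Stirling expansion \eqref{stirlingcons}, which already expresses $G(s)$ as $DK'^s\Gamma(s+\mu)$ times a finite asymptotic series in $1/(s+\mu)$ with an $O(1/s^{k+1})$ error, and to convert the polynomial factor $(s-1/2)^m$ together with those inverse-power corrections into a genuine finite $\mathbb{C}$-linear combination of shifted gamma functions $\Gamma(s+k+\mu)$. The two elementary identities driving everything are the functional equation $\Gamma(s+k+1+\mu) = (s+k+\mu)\Gamma(s+k+\mu)$, which lets us absorb each factor $(s-1/2) = (s+k+\mu) - (k+\mu+1/2)$ by raising the index of a gamma function at the cost of an extra lower-index term, and its reciprocal form $\frac{1}{s+\mu}\Gamma(s+\mu) = \Gamma(s+\mu-1)\cdot\frac{1}{(s+\mu-1)(s+\mu)}$ used more carefully — actually the cleaner route is to multiply through by $(s-1/2)^m$ first so that all manipulations only ever \emph{raise} gamma indices and never introduce poles.

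Concretely, first I would fix $k=m$ in \eqref{stirlingcons}, so that
\[
G(s)(s-1/2)^m = DK'^s\,(s-1/2)^m\,\Gamma(s+\mu)\Bigl[1 + \tfrac{c_1}{s+\mu} + \cdots + \tfrac{c_m}{(s+\mu)^m} + O\bigl(\tfrac{1}{s^{m+1}}\bigr)\Bigr].
\]
Each term $\frac{c_j}{(s+\mu)^j}(s-1/2)^m\Gamma(s+\mu)$ with $j\le m$ is, after writing $(s-1/2)^m$ as a polynomial in $(s+\mu)$ of degree $m$ and cancelling, a polynomial of degree $m-j$ in $(s+\mu)$ times $\Gamma(s+\mu)$, hence by repeated use of $\Gamma(s+\mu+1)=(s+\mu)\Gamma(s+\mu)$ a finite combination $\sum_{i=0}^{m-j} (\text{const})\,\Gamma(s+i+\mu)$. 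The leading term ($j=0$) likewise gives $\sum_{i=0}^{m}(\text{const})\Gamma(s+i+\mu)$, which supplies the $k=0,\dots,m$ range (and in particular the top index $m$ with a nonzero coefficient). Collecting, $G(s)(s-1/2)^m = DK'^s\sum_{k=0}^{m} \tilde b_k \Gamma(s+k+\mu) + DK'^s(s-1/2)^m\Gamma(s+\mu)\cdot O(1/s^{m+1})$. To get the statement for an arbitrary $n\le m$ rather than just $n=0$, I would instead truncate the asymptotic series at order $k=n$ from the start and more carefully retain $\Gamma(s+n+\mu)$ as the common factor of the remainder: writing the $O(1/s^{n+1})$ error and the residual polynomial pieces over the single factor $\Gamma(s+n+\mu)$ (using the functional equation to shift up to index $n$), one is left with $\Gamma(s+n+\mu)E_n(s)$ where $E_n(s)$ collects a polynomial-over-$(s+\mu)\cdots(s+n-1+\mu)$ piece plus the genuine $O(1/s)$ Stirling tail; a direct estimate shows $E_n(s)=O(1/s)$ in vertical strips. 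Finally set $A=D$, $B=K'$, $b_k = \tilde b_k$.

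The routine but slightly delicate point — and the one I expect to be the main obstacle in writing it cleanly — is bookkeeping the remainder so that it comes out exactly in the form $\Gamma(s+n+\mu)E_n(s)$ with $E_n(s)=O(1/s)$ \emph{uniformly in vertical strips}, rather than just $O(1/s)$ along the real axis: one must track that shifting the index of $\Gamma$ upward by the functional equation multiplies by polynomials in $s$ that do not degrade the decay, and that the implied constants in Stirling are uniform on $\sigma_1\le\sigma\le\sigma_2$ away from the poles (which the factor $(s-1/2)^m$, with $m$ chosen as the pole order of $(s-1)^mF(s)$ — here $m$ large enough to kill all poles — ensures). Everything else is the mechanical translation of a polynomial-times-$\Gamma$ into a sum of shifted $\Gamma$'s, exactly as in Lemma~3 of \cite{Booker2003}, and carries over verbatim since it uses only the gamma functional equation and not the presence or absence of an Euler product.
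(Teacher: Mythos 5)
Your high-level strategy---Stirling's expansion \eqref{stirlingcons} plus repeated use of the gamma functional equation---is exactly what the paper invokes (it explicitly follows Lemma~3 of \cite{Booker2003}). But there is a genuine error in the middle of your sketch. You claim that $\frac{c_j}{(s+\mu)^j}(s-1/2)^m\Gamma(s+\mu)$ becomes, ``after writing $(s-1/2)^m$ as a polynomial in $(s+\mu)$ of degree $m$ and cancelling, a polynomial of degree $m-j$ in $(s+\mu)$ times $\Gamma(s+\mu)$.'' This is false: expanding $(s-1/2)^m=\sum_{i=0}^m a_i(s+\mu)^i$ with $a_0=(-\mu-1/2)^m\neq 0$ (generically), the quotient by $(s+\mu)^j$ is $\sum_{i=0}^m a_i(s+\mu)^{i-j}$, which carries a genuine tail of negative powers of $(s+\mu)$. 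Nothing cancels. Your ``cleaner route'' of multiplying by $(s-1/2)^m$ first does not save you from this; the negative powers are unavoidable because the Stirling correction and the polynomial shift do not share a common zero.

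This is not a cosmetic slip, because the negative powers of $(s+\mu)$ are precisely what produce the gamma factors $\Gamma(s+k+\mu)$ with $k<0$ in the sum $\sum_{k=n}^{m}$, and the proposition is stated---and subsequently used---with $n$ \emph{negative}: the proof of Theorem~\ref{bookermodthm} takes $n=[-1-\re(\mu)]\le -1$ whenever $\re(\mu)\ge 0$. The actual mechanism is that $(s+\mu)^{-l}\Gamma(s+\mu)=\Gamma(s-l+\mu)\bigl(1+O(1/s)\bigr)$ for $l\ge 1$; one then chooses $b_n,\dots,b_m$ to match the coefficients of $(s+\mu)^{m},\dots,(s+\mu)^{n}$ in the Laurent-type asymptotic expansion, and the leftover $O(1/s)$ corrections together with the Stirling tail fold into $\Gamma(s+n+\mu)E_n(s)$. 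Your ``all polynomial'' version would leave an error of size $\Gamma(s+\mu)\cdot O(1/s)$ rather than $\Gamma(s+n+\mu)\cdot O(1/s)$; for $n<0$ these differ by a factor comparable to $|s+\mu|^{|n|}$, which is far too large for the contour shift to $\sigma=-n-\re(\mu)+\Delta$ used later. You also set $B=K'$ where the explicit $(2\pi)^{-s}$ inside the bracket forces $B=2\pi K'$, but that is minor bookkeeping.
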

The proposition above is the analogue of Lemma 3 of \cite{Booker2003}. To prove Theorem \ref{bookermodthm} all we have to 
really do is to replace Lemma 3 in that paper with Proposition \ref{replacelemthree} and follow the remainder of the proof. Although the modifications
 required are routine, we give a very quick sketch of the proof for the sake of completeness.
We will assume that for some (non-principal) Dirichlet character $\chi$, $F(s,\chi)$ has a pole at $s=\beta_0$ with $0\le \re(\beta_0)\le 1$. Because of assumption (G5), such a
pole must necessarily lie in the critical strip $0<\re(s)<1$. By the character inversion formula, it follows this pole must be a pole of some additive twist $F(s,\alpha):=\sum_{n=1}^{\infty}a_ne^{2\pi in\alpha}n^{-s}$,
for some rational number $\alpha$.  Following \cite{Booker2003} we consider the expression
\begin{equation}\label{resterm}
 \sum_{\beta\in T}\res_{s=\beta}G(s)F(s)\left(\alpha Be^{i(\frac{\pi}{2}-\delta)}\right)^{\frac{1}{2}-s}(s-1/2)^{m},
\end{equation}
where $\beta$ runs over the set of poles $T$ of $G(s)F(s)$,
$0<\delta<\frac{\pi}{2}$, $B$ is the constant appearing in Proposition \ref{replacelemthree}, and $m$ is a positve integer to be chosen subsequently. Since we have assumed that $F(s)$ satisfies (P2), it has at most finitely many poles in $\C$. We know that $\Phi(s)=Q^sG(s)F(s)$ is holomorphic in $\re(s)>1$, since $G(s)$ is holomorphic in $\re(s)>1/2$, and $F(s)$ satisfies (G1), so is holomorphic in $\re(s)>1$. Because of the functional equation \eqref{fnaleqn}, $\Phi(s)$ is holomorphic in $\re(s)\le 0$. Since $G(s)$ has only finitely many poles in any strip of finite width, it has only finitely many poles in $0< \re(s)<1$. Thus $\Phi(s)$, and hence, $G(s)F(s)$ has only finitely many poles in $\C$, so the set $T$ is finite.
We will show that if $\re(\beta)>0$ for some $\beta\in T$, the sum \eqref{resterm} can be made arbitarily large 
as $\delta\to 0$. However, if $F(s)$ has only finitely many poles as we have assumed, \eqref{resterm} remains bounded as $\delta\to 0$, producing a contradiction. Thus, $F(s,\chi)$ cannot have poles in $0<\re(s)\le 1$ as desired.

Standard techniques involving shifting the line of integration, together with the functional equation \eqref{fnaleqn}, yield the following analogue of 
equation (3) of \cite{Booker2003}:
\begin{flalign}\label{three}
&\sum_{\beta\in T}\res_{s=\beta}G(s)F(s)\left(\alpha Be^{i(\frac{\pi}{2}-\delta)}\right)^{\frac{1}{2}-s}
\left(s-1/2\right)^{m}=\nonumber\\
&\frac{1}{2\pi i}\int_{(\sigma)}
\left[F(s)G(s)\left({\alpha}Be^{i(\frac{\pi}{2}-\delta)}\right)^{\frac{1}{2}-s}
-(-1)^m\omega\tilde{G}(s)\tilde{F}(s)\left({(\alpha}BQ^2)^{-1}e^{-i(\frac{\pi}{2}-\delta)}\right)^{\frac{1}{2}-s}\right]\nonumber\\
&\hskip 7.5cm \left(s-1/2\right)^{m} ds,
\end{flalign}
where $\int_{(\sigma)}$ indicates the integral on the line $\reals=\sigma$, for 
some $\sigma>1$.
Using Proposition \ref{replacelemthree} we see that the equation above can be rewritten as
\begin{flalign}\label{eighteen}
&\sum_{s=\beta}
 G(s)F(s)\left(\alpha Be^{i(\frac{\pi}{2}-\delta)}\right)^{\frac{1}{2}-s}
(s-1/2)^{m}\nonumber\\
 &=\frac{1}{2\pi i}\int_{(\sigma)}
 F(s)\left(\alpha Be^{i(\frac{\pi}{2}-\delta)}\right)^{\frac{1}{2}-s}
\left[ AB^s\sum_{k=n}^{m}b_k(2\pi)^{-s}\Gamma(s+k+\mu)\right]\nonumber\\
&-(-1)^m\omega\tilde{F}(s)\left({(\alpha}BQ^2)^{-1}e^{-i(\frac{\pi}{2}-\delta)}\right)^{\frac{1}{2}-s}
\left[ AB^s\sum_{k=n}^{m}b_k(2\pi)^{-s}\Gamma(s+k+\bar{\mu})\right]\nonumber\\
&+F(s)\left(\alpha Be^{i(\frac{\pi}{2}-\delta)}\right)^{\frac{1}{2}-s}
(2\pi)^{-s}\Gamma(s+n+\mu)E_n(s)\nonumber\\
&-(-1)^m\omega\tilde{F}(s)\left({(\alpha}BQ^2)^{-1}e^{-i(\frac{\pi}{2}-\delta)}\right)^{\frac{1}{2}-s}(2\pi)^{-s}\Gamma(s+n+\bar{\mu})\tilde{E_n}(s)ds
\end{flalign}
Let $n=[-1-\re(\mu)]$, and consider the last two terms in the right hand side of \eqref{eighteen} above.
If we shift the contour to 
$\sigma=-n-\re(\mu)+\Delta$ for $0<\Delta<1/2$, we note that 
$\re(\sigma)>1$, so we are in the domain of (absolute) convergence of $F(s)$ and $\tilde{F}(s)$.
It follows that the integrands in these terms are $O(1/|s|^{3/2-\Delta})$ independent of $\delta$, so the integrals are $O(1)$.

We now treat the first two terms on the right hand side of \eqref{eighteen}.
Using Lemma 2 of \cite{Booker2003} when $\sigma=k+\mu$ (this lemma just uses the fact that 
the function $e^{-z}$ is the inverse Mellin transform of $\Gamma(s)$), we see that the right-hand side in \eqref{three} is $O(1)$ plus a linear combination of terms of the form
\begin{flalign}\label{postlemtwo}
&\frac{1}{2\pi i}\int_{(\sigma)}\bigg[B\Gamma\left(s+k+\mu\right)F(s,\alpha)\alpha^{\frac{1}{2}-s}
e^{\frac{i\delta}{2}(s-k-\mu-1)+\frac{i\pi}{2}(k+\mu+\frac{1}{2})}-\nonumber\\
&(-1)^{m}\omega \Gamma(s+k+\bar{\mu})\tilde{F}(s,(BQ^2\alpha)^{-1})\left(( B^2Q^2\alpha)^{-1}\right)^{\frac{1}{2}-s}e^{-\frac{i\delta}{2}(s-k-\mu-1)+\frac{i\pi}{2}(k+\mu+\frac{1}{2})}\bigg]\nonumber\\
&\hskip 7cm\times\left(2\sin\frac{\delta}{2}\right)^{-(s+k+\mu)}(2\pi)^{-s}ds,
\end{flalign}
where $n\le k\le m$. For $k$ such that $k+\re(\mu)<-1$, we shift the line of 
integration to $1+\sigma_0$, where $\sigma_0>0$ is such that
$-k-\re(\mu)>1+\sigma_0$ for all such $k\ge n$. This will give rise to residues arising from the poles of $\Gamma\left(s+k+\mu\right)$, but these will be independent of $\delta$. It follows that their contribution will be $O(1)$. The integrals on the line $\sigma=1+\sigma_0$ will likewise be $O(1)$.

Modifying \cite{Booker2003} once again, we consider the $k=m$ term and set 
\begin{flalign}
&f(s,\delta)=B\Gamma\left(s+m+\mu\right)F(s,\alpha)\alpha^{\frac{1}{2}-s}
e^{\frac{i\delta}{2}(s-m-\mu-1)+\frac{i\pi}{2}(m+\mu+\frac{1}{2})}-\nonumber\\
&(-1)^{m}\omega \Gamma(s+m+\bar{\mu})\tilde{F}(s,(BQ^2\alpha)^{-1})\left(( BQ^2\alpha)^{-1}\right)^{\frac{1}{2}-s}e^{-\frac{i\delta}{2}(s-m-\mu-1)+\frac{i\pi}{2}(m+\mu+\frac{1}{2})}.
\end{flalign}
Note that 
\begin{flalign}
f(s,0)&=B\Gamma\left(s+m+\mu\right)F(s,\alpha)\alpha^{\frac{1}{2}-s}
e^{\frac{i\pi}{2}(m+\mu+\frac{1}{2})}\nonumber\\
&-(-1)^{m}\omega \Gamma(s+m+\bar{\mu})\tilde{F}(s,(BQ^2\alpha)^{-1})\left(( BQ^2\alpha)^{-1}\right)^{\frac{1}{2}-s}e^{\frac{i\pi}{2}(m+\mu+\frac{1}{2})}\nonumber
\end{flalign}
Since $f(s,\delta)$ is holomorphic as a function of $\delta$, we may 
write $f(s,\delta)=f(s,0)+f_1(s,\delta)\delta$, where $f_1(s,\delta)$ is holomorphic in $\delta$ in a neighbourhood of $0$.
Thus the term $k=m$ in the expression \eqref{postlemtwo} has the form
\[
\frac{1}{2\pi i}\int_{(\sigma)}[f(s,0)+f_1(s,\delta)\delta] \left(2\sin\frac{\delta}{2}\right)^{-(s+m+\mu)}(2\pi)^{-s}ds.
\]
Recall that we have assumed that $F(s,\alpha)$ has a pole at $s=\beta_0$ with
$0<\re(\beta_0)<1$.
We now have two possible cases. Suppose first that $f(s,0)$ has a pole at $s=\beta_0$. Then by Lemma 4 of \cite{Booker2003}, we see that
\[
\int_{(\sigma)}f(s,0)(2\sin \delta/2)^{-s-m-\mu}ds=\Omega_{\varepsilon}(\delta^{-\beta_0-m-\re(\mu)+\varepsilon}).
\]
The integral $\int_{(\sigma)}f_1(s,\delta)\delta(2\sin \delta/2)^{-s-m-\mu}ds$ is obviously $O(\delta^{-m-\re(\mu)-\varepsilon})$ if we
take $\sigma=1+\varepsilon$. Similarly, we see that the integrals in the expression \eqref{postlemtwo} with $k<m$ are also $O(\delta^{-m-\re(\mu)-\varepsilon})$. Since $\re(\beta_0)>0$, we see that for
$\varepsilon$ small enough, the first integral dominates the second
for infinitely many $\delta$ as $\delta\to 0$ for $k=m$, as also the integrals when $k<m$, contradicting the assertion that the expression \eqref{resterm} is $O(1)$ as $\delta\to 0$.

If $f(s,0)$ is entire, we can simply follow the last part of the argument in \cite{Booker2003} (see pages 1096-1097) to get a similar contradiction and thus obtain our theorem.
\end{proof}
\begin{corollary}\label{phischientire} Under the assumptions of the theorem,
if $\chi\ne 1$ is a primitive character, $\Phi(s,\chi)=G(s,\chi)F(s,\chi)$ is entire.
\end{corollary}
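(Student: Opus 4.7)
The plan is to combine Theorem \ref{bookermodthm} with conditions (G1) and (G3) and the functional equation satisfied by the twisted series.

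First, I would show that $F(s,\chi)$ is holomorphic in the open right half-plane $\re(s)>0$. By (G1), $F(s)=\sum a_n n^{-s}$ converges absolutely for $\re(s)>1$, and since $|\chi(n)|\le 1$, the twisted series $F(s,\chi)$ also converges absolutely there and is hence holomorphic in $\re(s)>1$; Theorem \ref{bookermodthm} rules out poles in $0<\re(s)\le 1$. Next, $G(s,\chi)$ is obtained from $G(s)$ by shifting the archimedean parameters $\mu_j,\mu_{j'}'$ of the gamma factors by non-negative amounts dictated by the parity of $\chi$, so condition (G3) passes to $G(s,\chi)$, and all its poles lie in $\re(s)<1/2$. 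Consequently $\Phi(s,\chi)=G(s,\chi)F(s,\chi)$ is holomorphic in the closed half-plane $\re(s)\ge 1/2$.

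For $\re(s)\le 1/2$, I would invoke the twisted functional equation $\Phi(s,\chi)=\omega_\chi\overline{\Phi(1-\bar{s},\bar{\chi})}$, derived from \eqref{fnaleqn} via the character inversion formula $\chi(n)=\tau(\bar{\chi})^{-1}\sum_{a\bmod q}\bar{\chi}(a)e^{2\pi ian/q}$: this writes $F(s,\chi)$ as a finite linear combination of additive twists $F(s,a/q)$, each of which inherits a functional equation from \eqref{fnaleqn} by the kind of contour-shift and Mellin-transform argument already used in the proof of Theorem \ref{bookermodthm}. Applying the previous paragraph with $\bar{\chi}$ in place of $\chi$ shows $\Phi(s,\bar{\chi})$ is holomorphic in $\re(s)\ge 1/2$; hence $\overline{\Phi(1-\bar{s},\bar{\chi})}$ is holomorphic in $\re(s)\le 1/2$, and so $\Phi(s,\chi)$ extends holomorphically across the critical line. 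Combining the two half-planes, $\Phi(s,\chi)$ is entire.

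The main obstacle is setting up the twisted functional equation cleanly at the axiomatic level of $\gsel$, since $\gsel$ is not known to be closed under character twists. However, for $F=F_2$ of Theorem \ref{zerothm} the required functional equation comes directly from the twisted functional equations of $L(s,\pi\times\chi)$ and $L(s,\rho\times\chi)$ established in the theory of Rankin--Selberg $L$-functions, so the corollary is immediate once Theorem \ref{bookermodthm} is in hand.
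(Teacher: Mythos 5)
Your overall route is the same as the paper's: Theorem \ref{bookermodthm} rules out poles of $F(s,\chi)$ in $0<\re(s)\le 1$, (G1) and (G5) give holomorphy of $\Phi(s,\chi)$ for $\re(s)\ge 1$, (G3) places all poles of $G(s,\chi)$ in $\re(s)<1/2$, and the remaining region is handled by reflecting across the critical line via the twisted functional equation. The paper argues pole by pole (the cases $0<\re(\beta)<1/2$ and $\re(\beta)\le 0$ separately), while you phrase it as two half-planes and apply the first half to $\bar\chi$; that difference is cosmetic.

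The genuine defect is the claim in your middle paragraph that the twisted functional equation $\Phi(s,\chi)=\omega_\chi\overline{\Phi(1-\bar s,\bar\chi)}$ can be \emph{derived} from \eqref{fnaleqn} by character inversion together with the contour-shift and Mellin-transform arguments from the proof of Theorem \ref{bookermodthm}. It cannot. Character inversion writes $F(s,\chi)$ as a finite combination of additive twists $F(s,a/q)$, but an additive twist of an element of $\gsel$ does not inherit a Riemann-type functional equation of its own; the Booker-style machinery extracts only asymptotic information about residue sums as $\delta\to 0$, not functional equations. If twisted functional equations followed formally from \eqref{fnaleqn}, the twist-closure conjecture for $\sel$ (and for $\gselp$) would be trivial. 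Your final paragraph effectively concedes this and falls back on the case $F=F_2$, where the twisted functional equations come from the automorphic theory — and that is exactly how the corollary is meant to be read: it is invoked only where \eqref{secquotfneqn}, i.e.\ the functional equations of $L(s,\pi\times\chi)$ and $L(s,\rho\times\chi)$, is available, and the paper's own proof likewise simply cites ``the functional equation'' for $\Phi(s,\chi)$ rather than deriving it from the untwisted one. So your argument is sound once you delete the purported derivation and instead take the twisted functional equation, with $G(s,\chi)$ of the form \eqref{gammafactor} satisfying (G3), as a standing hypothesis. A minor further quibble: twisting does not shift the archimedean parameters by non-negative amounts (an odd twist can lower an $\epsilon_l$ from $1$ to $0$), though the conclusion that (G3) persists for $G(s,\chi)$ is still correct, by Theorem \ref{jslocboundsthm} in the automorphic setting.
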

\begin{proof} 
We note $G(s,\chi)$ is holomorphic in $\re(s)\ge 1/2$ by (G3). Hence, by (G1) and (G5), $\Phi(s,\chi)$ is holomorphic in $\re(s)\ge 1$.

Suppose that $\Phi(s,\chi)$ has a pole at $s=\beta$ with 
$0<\re(\beta)\le 1$. By the theorem above, we know that $F(s,\chi)$ has no poles in the strip $0<\re(s)\le 1$. It follows that $\beta$ is a pole of $G(s,\chi)$, so we must have $0<\re(\beta)<1/2$, and we further know that $(1-\beta)$ is not a pole of $G(s,\chi)$. Because of the functional equation, we know that $1-\beta$ is a pole of $\Phi(s,\chi)$, and hence a pole of $F(s,\chi)$. But $1/2<\re(1-\beta)\le 1$, which contradicts the theorem.

Suppose $\Phi(s,\chi)$ has a pole at $s=\beta$ and $\re(\beta)\le 0$. Then $\re(1-\beta)\ge 1$, so $1-\beta$ cannot be a pole of $\Phi(s,\chi)$, since
it is not a pole of $F(s,\chi)$ by (G1) and (G5), and not a pole of $G(s,\chi)$ by (G3). It follows from \ref{fnaleqn} that $\beta$ cannot be a pole of $\Phi(s,\chi)$.
\end{proof}

\section{Local and global $L$-functions of automorphic representations}\label{secautomorph}
We will quickly recall some more facts about automorphic $L$-functions. We will assume that we are working over $\Q$, though most of the theorems we state below are valid over arbitrary number fields. If $\sigma\in \autnall$, it is known that $\sigma=\otimes^{\prime}_v\sigma_v$, the restricted direct product of irreducible admissible representations
of $\glnqv$, where $v$ runs over the set of (finite and infinite) places of $\Q$. 
We let $\chi=\otimes_v^{\prime}\chi_v$ be a Dirichlet character viewed as an id\`ele class character. We also fix a global additive character $\psi=\otimes_v^{\prime}\psi_v$ of $\aq$ which is trivial on $\Q$.

\subsection{Non-archimedean local factors}\label{subsecnonarchl}
By the work of Godement-Jacquet in \cite{GoJa72} we can attach a local $L$-function $L(s,\sigma_p)$ to the representation $\sigma_p$. 
Explicitly, it has the form
\begin{equation}\label{nonarchlfn}
L(s,\sigma_p)=\prod_{i=1}^n\frac{1}{(1-\alpha_i(p)p^{-s})}
\end{equation}
for complex numbers $\alpha_i(p)$, $1\le i\le n$. For all but finitely many
primes $p$ we have $\alpha_i(p)\ne 0$, for $1\le i\le n$. These are called the unramified places of $\sigma$ or the places where $\sigma_v$ is unramified.
The remaining finitely many places are said to be the places where $\sigma$ is ramified. We denote this set of places by $S_{\sigma}$. If $\chi=\otimes^{\prime}_p\chi_p$ is a primitive Dirichlet character, we may attach the $L$-factor to the twist $\sigma_p\times\chi_p$ of $\sigma_p$ by $\chi_p$. 
Whenever $p\notin S_{\sigma}\cup S_{\chi}$, the twisted $L$-function has the form
\[
L(s,\sigma_p\times \chi_p)=\prod_{i=1}^n\frac{1}{(1-\chi(p)\alpha_i(p)p^{-s})}.
\]
Recall that we may also associate an $\varepsilon$-factor 
\[
\varepsilon(s,\sigma_p\times\chi_p,\psi_p)=r(\sigma_p\times\chi_p,\psi_p)K(\sigma_p\times\chi_p,\psi_p)^{1/2-s}
\]
to each pair $(\sigma_p,\chi_p)$. Here $r(\sigma_p\times\chi_p,\psi_p)$ is a complex number of absolute value $1$, and $K(\sigma_p\times\chi_p,\psi_p)=p^{n_{p,\chi}}$, for some integer $n_{p,\chi}\ge 0$ which depends on $\psi_p$. It is a fact that the local $\varepsilon$-factors are identically $1$ at the primes $p$ where both $\sigma_p$ and $\chi_p$ are unramified and, in fact, $r(\sigma_p\times\chi_p,\psi_p)=1=K(\sigma_p\times\chi_p,\psi_p)$ at these places.

We will really need to know the $\varepsilon$-factor only for characters. 
For a character $\chi_p$ of conductor $P=p^e$, the $\varepsilon$-factor is given by 
\begin{equation}\label{epschip}
\varepsilon(s,\chi_p)=(-1)^{\epsilon_{\chi_p}}\tau(\chi)P^{-s},
\end{equation}
where $\tau(\chi)$ is the Gauss sum associated to the character $\chi_p$ and the additive character $\psi_p$. Thus
$r(\chi_p,\psi_p)=\frac{\tau(\chi)}{P^{1/2}}$ and $K(\chi_p,\psi_p)=P$.

\subsection{Archimedean local factors}\label{susbsetarchl}
We may also define $L$-factors at the infinite place. They have the form
\begin{equation}\label{archlfn}
L(s,\sigma_{\infty})=\prod_{l=1}^{r_1}\Gamma_{\R}(s+\epsilon_l+\nu_l)\prod_{l=r_1+1}^{r_1+r_2}
\Gamma_{\C}\left(s+\frac{k_l-1}{2}+\nu_l\right),
\end{equation}
where 
\begin{equation}\label{archcases}
\Gamma_{\R}(s)=\pi^{-\frac{s}{2}}\Gamma\left(\frac{s}{2}\right)
~\text{and}~\Gamma_{\C}(s)=2(2\pi)^{-s}\Gamma(s).
\end{equation}
Here, the $\nu_l:=\nu_l(\sigma_{\infty})$ are complex numbers and $\epsilon_l:=\epsilon_l(\sigma_{\infty})$ is $1$ or $0$ according to whether or not the 
sign representation of $\R^{*}$ arises in the relevant representation of the Weil group. The $k_l:=k_l(\sigma_{\infty})\ge 1$ are integers for $r_1+1\le l\le r_1+r_2$ and $r_1+2r_2=n$. The condition that $\sigma$ be unitary yields
\begin{equation}\label{archunit}
\sum_{l=1}^{r_1}\re(\nu_l)+\sum_{l=r_1+1}^{r_1+r_2}2\re(\nu_l)=0.
\end{equation}
Assume now that $\chi$ is a Dirchlet character, so $\chi_{\infty}$ is either the trivial or sign character.
For the representations $\sigma_{\infty}\times\chi_{\infty}$, the corresponding
$L$-factors are
\begin{equation}\label{twistedarchlfn}
 L(s,\sigma_{\infty}\times\chi_{\infty})=\prod_{l=1}^{r_1}\Gamma_{\R}(s+\epsilon_{l,\chi}+\nu_l)\prod_{l=r_1+1}^{r_1+r_2}
\Gamma_{\C}\left(s+\frac{k_l-1}{2}+\nu_l\right),
 \end{equation}
where $\epsilon_{l,\chi}=\epsilon_l+\epsilon_{\chi}\pmod 2$, and 
$\epsilon_{\chi}=0$ if $\chi(-1)=1$, and $\epsilon_{\chi}=-1$ otherwise.

As in the non-archimedean case, we may associate a factor 
$\varepsilon(s,\sigma_{\infty}\times\chi_{\infty},\psi_{\infty})$ to the representation $\sigma_{\infty}\times\chi_{\infty}$. It has the form $r(\sigma_{\infty}\times\chi_{\infty},\psi_{\infty})K(\sigma_{\infty}\times\chi_{\infty},\psi_{\infty})^{1/2-s}$ with
$r(\sigma_{\infty}\times\chi_{\infty},\psi_{\infty})$ a complex number of absolute value $1$ and 
$K(\sigma_{\infty}\times\chi_{\infty},\psi_{\infty})>0$.

For future reference, we write out the archimedean factors more explicitly for a Dirichlet character $\chi$ and $\tau\in \auttwoall$. We will take our additive character to be $\psi_{\infty}(x):=e^{2\pi ix}$.

For a Drichlet character $\chi$
\begin{equation}\label{ginftyone}
L(s,\chi_{\infty})=\Gamma_{\R}(s+\epsilon_{\chi}),
\end{equation}
where $\epsilon_{\chi}\in \{0,1\}$ is defined by the equation $\chi(-1)=(-1)^{\epsilon_{\chi}}$. The $\varepsilon$-factor is given by
\begin{equation}\label{epschiinfty}
\varepsilon(s,\chi_{\infty})=i^{\epsilon_{\chi}}.
\end{equation}
For $\tau\in \auttwoall$, and $\chi$ as above, we have
\begin{equation}\label{ginftytwo}
L(s,\tau_{\infty}\times\chi_{\infty})=\begin{cases}\Gamma_{\R}(s+\epsilon_{1,\chi}+\nu+ib_1)
\Gamma_{\R}(s+\epsilon_{2,\chi}-\nu+ib_2)\,\,\text{if $r_1=2$, or}\\
\Gamma_{\C}\left(s+\frac{k-1}{2}+ib_3\right)\,\,\text{if $r_1=0$ and $r_2=1$},
\end{cases}
\end{equation}
where $\epsilon_{j,\chi}=\epsilon_{j}+\epsilon_{\chi}\pmod 2$ and
$\epsilon_{j}\in \{0,1\}$, $j=1,2$, and $\nu,b_1,b_2, b_3\in \R$, 
and $k\in \N$. The $\varepsilon$-factor is given by
\begin{equation}\label{epsiloninftytwo}
\varepsilon(s,\tau_{\infty}\times\chi_{\infty})=\begin{cases}i^{\varepsilon_1+\epsilon_2}\,\,\text{if $r_1=2$, or}\\
i^k\,\,\text{if $r_1=0$ and $r_2=1$}.
                                                 \end{cases}
\end{equation}
We note that the archimedean $\varepsilon$-factor above is independent of 
the character $\chi_{\infty}$ by which we are twisting.

\begin{theorem} \label{jslocboundsthm} Let $\sigma_v$ be an irreducible unitary representation of $\glnqv$.
\begin{enumerate}
\item If $v=p$ is finite, then
\begin{equation}\label{nonarchbounds}
p^{-1/2}<|\alpha_i(p)|<p^{1/2}
\end{equation}
for all $1\le i\le n$.
\item If $v=\infty$, then 
\begin{equation}\label{archbounds}
-1/2<\re(\nu_l)<1/2
\end{equation}
for all $1\le \nu_l\le r_1+r_2$.
\end{enumerate}
\end{theorem}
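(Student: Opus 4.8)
The plan is to reduce everything to a single inequality, $|\beta_j|<1/2$, for the real exponents occurring in the Langlands data of $\sigma_v$, and then to invoke (or reprove) the unitarity constraint that produces this bound. Since in our applications the $\sigma_v$ are local components of unitary cuspidal automorphic representations of $\gln$, they are generic, and it is this case we treat; in any event, an irreducible generic unitary representation of $\glnqv$ is always realised as such a local component. By the classification of irreducible generic representations of $\glnqv$ (Zelevinsky and Bernstein in the $p$-adic case, Vogan in the archimedean case), such a $\sigma_v$ is a fully induced, irreducible representation $\mathrm{Ind}\bigl(\delta_1|\det|^{\beta_1}\otimes\cdots\otimes\delta_r|\det|^{\beta_r}\bigr)$, where each $\delta_i$ is a discrete series of some $\glniqv$ and the $\beta_i$ are real. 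Discrete series are tempered, so their Satake parameters lie on the unit circle (resp. their archimedean parameters have vanishing real part); hence the numbers $\alpha_i(p)$ of \eqref{nonarchlfn} (resp. the $\nu_l$ of \eqref{archlfn}) are exactly those tempered parameters multiplied by $p^{\beta_j}$ (resp. shifted by $\beta_j$). Thus \eqref{nonarchbounds} and \eqref{archbounds} are equivalent to $\max_j|\beta_j|<1/2$.

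The next step is the non-strict bound $|\beta_j|\le 1/2$. Realising $\sigma_v=\pi_v$ for a unitary cuspidal $\pi$ on $\gln$, the Rankin--Selberg $L$-function $L(s,\pi\times\tilde\pi)$ has non-negative Dirichlet coefficients (its logarithm is $\sum_{p,k}\tfrac1k\,|\sum_i\alpha_i(p)^k|^2 p^{-ks}$ at the unramified places, up to the ramified contribution) and, by Jacquet--Shalika, is holomorphic for $\re(s)>1$ with a simple pole at $s=1$. Landau's theorem then forces its abscissa of absolute convergence to be exactly $1$, and comparison with the diagonal Euler factor $(1-|\alpha_i(p)|^2p^{-s})^{-1}$ gives $|\alpha_i(p)|^2\le p$, hence $|\beta_j|\le 1/2$; the archimedean inequality follows identically from the $\Gamma_{\R}$- and $\Gamma_{\C}$-factors of $L(s,\pi_\infty\times\tilde\pi_\infty)$ together with the location of the pole of the completed $L$-function and the fact that $L$-factors never vanish, so no cancellation can occur.

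The part requiring genuine input, and which I expect to be the main obstacle, is upgrading $\le$ to the strict inequality $<$. The cleanest route is to invoke the classification of the full unitary dual of $\glnqv$ (Tadi\'c in the $p$-adic case, Vogan in the archimedean case): a generic unitarizable Langlands datum has all of its complementary-series exponents strictly inside $(-1/2,1/2)$, since at $|\beta_j|=1/2$ the corresponding induced representation becomes reducible and so cannot equal the irreducible $\sigma_v$. An alternative, more self-contained argument combines the non-strict bound at \emph{every} place with the simplicity of the pole of $L(s,\pi\times\tilde\pi)$ at $s=1$ and its non-vanishing on $\re(s)=1$ away from $s=1$: if $|\alpha_i(p)|=p^{1/2}$ at some place, the Euler factor at $p$ alone already contributes a factor $(1-p^{1-s})^{-1}$, and tracking this through the isobaric decomposition of $\pi\times\tilde\pi$ forces a pole beyond the permitted simple one, a contradiction. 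Either way, once $|\beta_j|<1/2$ is established, \eqref{nonarchbounds} and \eqref{archbounds} follow at once from the first step.
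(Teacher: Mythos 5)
The paper records Theorem~\ref{jslocboundsthm} without proof, as a known fact attributed to Jacquet--Shalika; there is no argument of the paper's to compare against, so what follows is a review of your sketch on its own merits.

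The overall outline is the standard one, but two points deserve attention. First, the theorem as written --- for \emph{every} irreducible unitary $\sigma_v$ --- is not literally true. The trivial representation of $\gltwoqp$ is unitary, and its standard $L$-factor is $\zeta_p(s-\tfrac12)\zeta_p(s+\tfrac12)$, so $\alpha_1=p^{1/2}$ sits on the boundary rather than in the open interval; its archimedean counterpart likewise has $\re(\nu_l)=\pm\tfrac12$. Genericity (or some hypothesis excluding non-tempered Langlands quotients) is genuinely needed. You restrict to the generic case, which is the right move, but the restriction should be stated front and centre rather than as a parenthetical aside --- particularly since the paper later invokes this theorem to deduce (G3) for representations $\pi_\infty$, $\rho_\infty$ that it assumes only to be unitary.

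Second, once you are prepared to quote the Tadi\'c/Vogan description of the generic unitary dual, the strict bound $|\beta_j|<1/2$ drops out at once (the complementary-series range is exactly the open interval, and at $|\beta_j|=\tfrac12$ the full induction becomes reducible), so the entire global Rankin--Selberg detour in your middle paragraph is superfluous. If you do want the global route, note two gaps: (i) realising an arbitrary generic unitary $\sigma_v$ as a local component of some cuspidal $\pi\in\autn$ is a nontrivial globalisation step that needs its own citation; and (ii) the ``alternative, more self-contained'' upgrade from $\le$ to $<$ does not work as written. If $|\alpha_i(p)|=p^{1/2}$, the local factor of $L(s,\pi\times\tilde\pi)$ at $p$ contributes a \emph{simple} pole at $s=1$ from $(1-p^{1-s})^{-1}$, but the global Rankin--Selberg $L$-function itself has a simple pole at $s=1$, so one such local pole does not ``force a pole beyond the permitted simple one.'' The genuine Jacquet--Shalika/Serre argument --- the one the paper refers to when stating the stronger Theorem~\ref{jsboundsthm} --- requires a more careful use of non-negativity of coefficients and the abscissa of convergence, and its output is the sharper $|\alpha_i(p)|\le p^{1/2-1/(n^2+1)}$ for cuspidal $\pi$, not merely strict inequality. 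In short: cite the classification of the generic unitary dual, make the genericity hypothesis explicit, and drop the redundant and incompletely justified global argument.
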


\subsection{The multiplicativity of $L$- and $\varepsilon$-factors}
Let $\sigma_v$ be an irreducible admissible representation of $\glnqv$, and suppose that 
\[
\sigma_v=\sigma_{1,v}\boxplus \sigma_{2,v}\boxplus \cdots \boxplus \sigma_{k,v}.
\]
Recall that this means that there are (quasi-)cuspidal representations 
$\sigma_{i,v}$ of $\glniqv$, $1\le i\le k$, with $n_1+\cdots
+n_k=n$, such that $\sigma_v$ appears as the (unique)
irreducible quotient of the parabolically induced representation
$I_{\sigma_{1,v},\ldots ,\sigma_{k,v}}$
arranged so that $\re(s_i)\ge \re(s_{i+1})$ for $1\le i\le k-1$,
where $\omega_{\sigma_{i,v}}(z)=|z|^{s_1}$ is the central character of $\sigma_{i,v}$. It is a theorem of \cite{JPSS83} that the $L$-factors and $\varepsilon$-factors associated to $\sigma_v$ are multiplicative: 
\begin{equation}\label{lfnofindrep} 
L(s,\sigma_v)=\prod_{i=1}^kL(s,\sigma_{i,v})\quad\text{and}\quad 
\varepsilon(s,\sigma_v)=\prod_{i=1}^k\varepsilon(s,\sigma_{i,v}).
\end{equation}

\subsection{The global theory}\label{subsecgloball}
Let $\sigma=\otimes_{v}^{\prime}\sigma_v$ be an irreducible unitary representation of $\gln$ with a unitary automorphic central character $\omega_{\sigma}$ (any $\sigma\in \temprep$ is of this form).
Let $S$ be a finite subset of the places of $\Q$. We may define the partial $L$-functions
\[
L_S(s,\sigma\times \chi):=\prod_{p\notin S}L(s,\sigma_p\times\chi_p),
\]
as well as the Dirichlet series and (completed) $L$-function associated to $\sigma\times \chi$ as
\[
L(s,\sigma\times\chi)=\prod_{p<\infty}L(s,\sigma_p\times\chi_p)\quad\text{and}\quad\Lambda(s,\sigma\times\chi)=\prod_{v}L(s,\sigma_v\times\chi_v)
\]
respectively. If $\chi$ is a Dirichlet character, we get the corresponding $L$-functions for $\sigma\times\chi$.
Likewise, we have the global $\varepsilon$-factors
\[
\varepsilon(s,\sigma\times\chi)=\prod_v\varepsilon(s,\sigma_v\times\chi_v)
\]
attached to $\sigma\times\chi$ (recall that all but finitely many factors in the product above are $1$). While the local $\varepsilon$-factors depend
on the choices of local additive characters of $\Q_p$, the global $\varepsilon$-factors are independent of the choice $\psi$ of a global additive character.
We see that 
\[
\varepsilon(s,\sigma\times\chi)=r(\sigma\times\chi)K(\sigma\times\chi)^{1/2-s},
\]
where
\[
r(\sigma\times\chi)=\prod_vr(\sigma_v\times\chi_v)\quad\text{and}\quad K(\sigma\times\chi)=\prod_{v}K(\sigma_{v}\times\chi_{v}).
\]

The theorem below is a collection of results from \cite{GoJa72}.
\begin{theorem}\label{sigmaponeptwo} Let $\sigma\in \autn$
\begin{enumerate}
 \item The function $L(s,\sigma)$ satisfies (P1) and has at most finitely many poles. Further, if $\sigma\in \autn$, it is entire.
 \item For every primitive Dirichlet character $\chi$, we have the functional equation
 \begin{equation}\label{autlfneqn}
\Lambda(s,\sigma\times\chi)=\varepsilon(s,\sigma\times\chi)\Lambda(1-s,\tilde{\sigma}\times\chi^{-1}),
\end{equation}
where $\tilde{\sigma}$ denotes the contragredient of $\sigma$.
\end{enumerate}
\end{theorem}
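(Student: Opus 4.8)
The plan is to deduce both assertions from the Godement--Jacquet theory of principal $L$-functions, which extends Tate's thesis (the case $n=1$) and the Jacquet--Langlands treatment of $\gltwog$. First I would reduce the twisted functional equation to the untwisted one: a Dirichlet character $\chi$, regarded as an id\`ele class character, produces a representation $\sigma\otimes\chi$ lying in the same class as $\sigma$ (cuspidal if $\sigma$ is so, isobaric with the correspondingly twisted constituents otherwise), with $L(s,(\sigma\otimes\chi)_v)=L(s,\sigma_v\times\chi_v)$ and $\varepsilon(s,(\sigma\otimes\chi)_v,\psi_v)=\varepsilon(s,\sigma_v\times\chi_v,\psi_v)$ at every place, and $\widetilde{\sigma\otimes\chi}=\tilde\sigma\otimes\chi^{-1}$. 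Hence \eqref{autlfneqn} for $\sigma\times\chi$ is just the standard functional equation for the single representation $\sigma\otimes\chi$, and it suffices to treat $\chi=1$.

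Next I would attach, to a matrix coefficient $f(g)=\langle\sigma(g)v,\tilde v\rangle$ of $\sigma$ (or, when $\sigma$ is not cuspidal, to a suitable automorphic substitute) and a Schwartz--Bruhat function $\Phi$ on $M_n(\aq)$, the global zeta integral
\[
Z(s,f,\Phi)=\int_{\gln}\Phi(g)\,f(g)\,\lvert\det g\rvert^{\,s+\frac{n-1}{2}}\,dg,
\]
and proceed in three steps. \emph{(i)} Show that $Z$ converges for $\re(s)$ large and, by uniqueness of the relevant local data, factors as an Euler product $\prod_v Z_v(s,f_v,\Phi_v)$. \emph{(ii)} Invoke the local theory of \cite{GoJa72}: each $Z_v(s,f_v,\Phi_v)$ equals $L(s,\sigma_v)$ times an entire function which is identically $1$ for good data at almost every $v$, and satisfies a local functional equation relating $Z_v(s,f_v,\Phi_v)$ to $Z_v(1-s,\tilde f_v,\widehat\Phi_v)$ with a holomorphic, nowhere-vanishing proportionality factor whose product over $v$ is $\varepsilon(s,\sigma)$ (independent of $\psi$, since $\psi$ is trivial on $\Q$). \emph{(iii)} Apply Poisson summation on $M_n(\aq)$ --- the non-abelian analogue of Tate's argument, using the Fourier transform $\Phi\mapsto\widehat\Phi$ attached to $\psi$ --- to obtain the meromorphic continuation of $Z(s,f,\Phi)$ and the functional equation $Z(s,f,\Phi)=Z(1-s,\tilde f,\widehat\Phi)$. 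Combining \emph{(i)}--\emph{(iii)}, dividing by $\prod_v L(s,\sigma_v)$, and using the local identities then yields \eqref{autlfneqn} for $\chi=1$, and hence in general.

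To locate the poles: when $\sigma\in\autn$ is cuspidal its matrix coefficients are rapidly decreasing, so $Z(s,f,\Phi)$ is entire in $s$; choosing the ramified and archimedean test data so that their local factors contribute only a nonzero constant, I would conclude that $\Lambda(s,\sigma)$ is entire, and hence that $L(s,\sigma)=\prod_{p<\infty}L(s,\sigma_p)$ is entire, since the archimedean factor \eqref{archlfn} is holomorphic and non-vanishing in a right half-plane by the bounds \eqref{archbounds}. For a general $\sigma=\boxplus_i\sigma_i\in\temprepn$ with $\sigma_i\in\autni$, the multiplicativity \eqref{lfnofindrep} of the $L$-factors, applied at every place, gives $L(s,\sigma)=\prod_i L(s,\sigma_i)$; each cuspidal factor is entire except when $n_i=1$ and $\sigma_i=\lvert\cdot\rvert^{it}$ for some $t\in\R$, in which case $L(s,\sigma_i)=\zeta(s+it)$ contributes one pole, so $L(s,\sigma)$ has at most finitely many poles. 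Finally, absolute convergence of the Euler product for $\re(s)>1$, which follows from the Rankin--Selberg estimates of \cite{JaSh811}, shows that $\sigma_a=1\ge 1/2$, so (P1) holds.

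The hard part will be step \emph{(iii)}: unlike in Tate's thesis one cannot naively Poisson-sum, because for non-cuspidal $\sigma$ one must work with Eisenstein-type data whose growth forces a regularization of the zeta integral, and matching the local zeta integrals with the prescribed $L$-factors at the ramified and archimedean places requires the full local functional equation together with a delicate computation of those integrals. All of this is carried out in \cite{GoJa72}, which we simply invoke.
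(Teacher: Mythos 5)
Your approach matches the paper's intent. The paper gives no proof, noting only that ``The theorem below is a collection of results from \cite{GoJa72},'' and your sketch is a reasonable reconstruction of the Godement--Jacquet zeta-integral argument (reduction of the $\chi$-twisted functional equation to the untwisted one for $\sigma\otimes\chi$, the global zeta integral, local GCD theory and local functional equations, Poisson summation on $M_n(\aq)$, rapid decay in the cuspidal case). You also correctly read the evidently mis-typed hypothesis ``$\sigma\in\autn$'' as ``$\sigma\in\temprepn$'' --- the isobaric class is what the theorem is actually used for later --- and you handle that case through the multiplicativity \eqref{lfnofindrep}, identifying the $GL_1$ constituents $|\cdot|^{it}$ as the only source of the finitely many poles.

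There is, however, one genuine gap, in your verification of (P1). You assert that ``absolute convergence of the Euler product for $\re(s)>1$, which follows from the Rankin--Selberg estimates of \cite{JaSh811}, shows that $\sigma_a=1\ge 1/2$.'' This is a non-sequitur: convergence on $\re(s)>1$ only yields the \emph{upper} bound $\sigma_a\le 1$, which is not what (P1) asks for. Condition (P1) requires the \emph{lower} bound $\sigma_a\ge 1/2$, and that needs a separate argument. One way: if $\sigma_a<1/2$, the Euler product (which, for a multiplicative Dirichlet series, converges and is nonvanishing wherever the series converges absolutely) would give $L(s,\sigma)\ne 0$ for $\re(s)>\sigma_a$, and the functional equation \eqref{autlfneqn} would then force $L(s,\sigma)$ to have no nontrivial zeros at all, contradicting the standard Riemann--von Mangoldt-type zero count for these $L$-functions. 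Alternatively one can argue from the pole of $L(s,\sigma\times\tilde\sigma)$ at $s=1$ (which controls $\sum_p|a_p|^2p^{-s}$) together with a Cauchy--Schwarz/boundedness estimate. Either way, as written your proof of (P1) establishes only half of the required condition, and this step should be supplied explicitly.
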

Since $\sigma$ is unitary 
\[
L(1-s,\tilde{\sigma}_v\times\chi_v^{-1})=\tilde{L}(1-s,\sigma_v\times\chi_v).
\]
for every place $v$. It follows that $L(s,\sigma\times\chi)$ satisfies a functional equation of the form 
given in \eqref{fnaleqn}, thereby establishing that $L(s,\sigma\times\chi)$ satisfies (P3). 

The growth condition in (P2) was proved  for the standard $L$-function $L(s,\sigma)$, $\sigma\in \temprepn$ in 
\cite{JPSS792}, and for a larger class of $L$-functions associated to automorphic representations by  Gelbart and Shahidi in \cite{GeSh01}. In conjunction with the first part of Theorem \ref{sigmaponeptwo}, we see that $L(s,\sigma)$ satisfies (P2). 

The condition (P4) is satisfied by $L(s,\sigma)$ once (P1) is known. We thus see that $L(s,\sigma)\in \aselp$ for $\sigma\in \autn$. By using \eqref{lfnofindrep}, we see that $L(s,\sigma)\in \aselp$ for all $\sigma\in \temprep_n$.

If $\sigma\in \autn$, it follows from the results of Jacquet and Shalika in \cite{JaSh812} and \cite{JaSh811} that $L(s,\sigma)$ satisfies (G1). 
Since cuspidal automorphic representations are globally generic, every local constituent $\sigma_v$ of $\sigma$ is unitary and 
generic. For such local constituents, Serre observed  (the relevant letter is reproduced in \cite{BlBr2013}) that the work of Jacquet and Shalika in \cite{JaSh812,JaSh811} together with Theorem 4.1 of Chandrasekharan and Narasimhan in \cite{KCRN62} yields improvements on Theorem \ref{jslocboundsthm}, while the archimedean bounds were first proved in \cite{LRS1999}.\begin{theorem} \label{jsboundsthm} Let $\sigma=\otimes_v^{\prime}\sigma_v$ be an unitary cuspidal automorphic representation of $\gln$.
\begin{enumerate}
\item If $v=p$ is finite, then
\begin{equation}\label{nonarchbounds}
p^{-1/2+1/(n^2+1)}<|\alpha_i(p)|<p^{1/2-1/(n^2+1)}
\end{equation}
for all $1\le i\le n$.
\item If $v=\infty$, then 
\begin{equation}\label{archbounds}
-1/2+1/(n^2+1)<\re(\nu_l)<1/2-1/(n^2+1)
\end{equation}
for all $1\le \nu_l\le r_1+r_2$.
\end{enumerate}
\end{theorem}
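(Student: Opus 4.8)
The plan is to deduce both statements from the analytic properties of the Rankin--Selberg $L$-function $L(s,\sigma\times\tilde\sigma)$, following an observation of Serre for the non-archimedean bound (the relevant letter is reproduced in \cite{BlBr2013}) and Luo--Rudnick--Sarnak \cite{LRS1999} for the archimedean one. First I would collect the facts about $L(s,\sigma\times\tilde\sigma)=\sum_{m\ge1}c_m m^{-s}$ furnished by the Rankin--Selberg theory of Jacquet, Piatetski-Shapiro and Shalika (\cite{JaSh811,JaSh812}): (i) the coefficients $c_m$ are non-negative --- at an unramified prime $p$ the $p^{-ks}$-coefficient of $-\log L(s,\sigma_p\times\tilde\sigma_p)=-\sum_{i,j}\log(1-\alpha_i(p)\alpha_j(p)^{-1}p^{-s})$ is $\tfrac{1}{k}\bigl|\sum_i\alpha_i(p)^k\bigr|^{2}$, using that unitarity of $\sigma$ gives $\{\alpha_j(p)^{-1}\}=\{\overline{\alpha_j(p)}\}$, and positivity is known at the ramified primes as well; (ii) $L(s,\sigma\times\tilde\sigma)$ is meromorphic on $\C$, with a single pole --- simple, at $s=1$ --- in $\re(s)\ge\tfrac12$; and (iii) it satisfies a functional equation of the shape \eqref{autlfneqn} whose completed archimedean factor $L(s,\sigma_\infty\times\tilde\sigma_\infty)$ is, after the duplication formula $\Gamma_{\C}(s)=\Gamma_{\R}(s)\Gamma_{\R}(s+1)$, a product of exactly $n^{2}$ gamma factors $\Gamma_{\R}(\tfrac{s}{2}+\cdots)$.

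I would then feed (i)--(iii) into Theorem~4.1 of \cite{KCRN62}, which for a Dirichlet series with non-negative coefficients, a single simple pole at $s=1$, and a functional equation carrying $n^{2}$ such gamma factors produces the estimate $\sum_{m\le x}c_m = r\,x + O\bigl(x^{(n^{2}-1)/(n^{2}+1)}\bigr)$ for some constant $r>0$. Since all $c_m\ge0$, applying this on the short intervals $(x,\,x+x^{(n^{2}-1)/(n^{2}+1)}]$ yields the pointwise bound $c_m=O\bigl(m^{(n^{2}-1)/(n^{2}+1)}\bigr)$ with an absolute implied constant.

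Next I would localise at each unramified prime $p$, where $L(s,\sigma_p\times\tilde\sigma_p)=\prod_{i,j=1}^{n}\bigl(1-\alpha_i(p)\alpha_j(p)^{-1}p^{-s}\bigr)^{-1}=\sum_{k\ge0}c_{p^{k}}p^{-ks}$. By Cauchy--Hadamard, $\limsup_k c_{p^{k}}^{1/k}$ is the reciprocal of the radius of convergence of this power series in $p^{-s}$, namely $\max_{i,j}|\alpha_i(p)/\alpha_j(p)|$ (no pole of such a product can cancel); and unitarity makes $\{|\alpha_i(p)|\}$ invariant under $x\mapsto x^{-1}$, so this maximum equals $(\max_i|\alpha_i(p)|)^{2}$. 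Combining with $c_{p^{k}}=O\bigl(p^{k(n^{2}-1)/(n^{2}+1)}\bigr)$ gives $(\max_i|\alpha_i(p)|)^{2}\le p^{(n^{2}-1)/(n^{2}+1)}$, hence $|\alpha_i(p)|\le p^{1/2-1/(n^{2}+1)}$ for every $i$, and the same symmetry returns the matching lower bound; a small additional argument (present in \cite{LRS1999}) upgrades these to the strict inequalities claimed. For the archimedean bound \eqref{archbounds} I would run the analogous scheme at $v=\infty$: the role of the local Euler factor is now played by the $\Gamma$-factor $L(s,\sigma_\infty\times\tilde\sigma_\infty)$, whose poles are governed by the quantities $\re(\nu_l)+\re(\nu_{l'})$, and the constraint again comes from (ii)--(iii) together with the non-vanishing of Rankin--Selberg $L$-functions on $\re(s)=1$; the details are in \cite{LRS1999}.

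The genuinely delicate point is the \emph{sharp} exponent $(n^{2}-1)/(n^{2}+1)$ in Theorem~4.1 of \cite{KCRN62}: using only that $L(s,\sigma\times\tilde\sigma)$ converges absolutely for $\re(s)>1$ recovers merely the trivial bound $|\alpha_i(p)|\le p^{1/2}$ of Theorem~\ref{jslocboundsthm}, so the whole improvement is carried by the error term, whose exponent is dictated by the degree $n^{2}$ of the Rankin--Selberg gamma factor. The archimedean case is technically the hardest part, and for it I would rely on \cite{LRS1999}.
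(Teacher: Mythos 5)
Your proposal is correct and coincides with the argument the paper has in mind: the paper supplies no proof of this theorem, but cites Serre's observation (reproduced in \cite{BlBr2013}) that combining the Rankin--Selberg theory of \cite{JaSh812,JaSh811} --- nonnegativity of the coefficients of $L(s,\sigma\times\tilde\sigma)$, its simple pole at $s=1$, and its degree-$n^{2}$ functional equation --- with Theorem~4.1 of \cite{KCRN62} gives the non-archimedean bound, and cites \cite{LRS1999} for the archimedean one. You have simply filled in the details of exactly that cited argument, so there is nothing to compare.
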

\begin{remark} For archimedean $v$, Theorem \ref{jslocboundsthm} is adequate for our purposes.
\end{remark}
\begin{remark} Over number fields, the corresponding results were proved in \cite{LRS1995} and \cite{LRS1999}.
\end{remark}
\begin{remark} \label{ramanujanrmk} If $\sigma=\otimes^{\prime}_v\sigma_v\in \autn$, the statements $|\alpha_i(p)|=1$, $1\le i\le n$ for all $p$, and ${\re(\nu_l)}=0$,
$1\le l\le r_1+r_2$ for $v=\infty$ are equivalent to the Generalised Ramanujan Conjecture for $\sigma$. The conjecture has been established (by Deligne) when $\sigma\in \auttwo$, with $\sigma_{\infty}$ a (limit of) discrete series representation. It is also known to be true for a large class of Galois representations, namely those associated to regular algebraic, cuspidal automorphic representations.
\end{remark}
Theorem \ref{jsboundsthm} shows that $L(s,\sigma)$ satisfies (G3) and (G4), so $L(s,\sigma)\in \gselp_n$. If $\sigma\in 
\temprepn$, \eqref{lfnofindrep} together with Theorem \ref{jsboundsthm} shows that $L(s,\sigma)\in \gselp_n$.
We summarise our discussion above as the following theorem.

\begin{theorem}\label{lautoing} For $\sigma\in \temprepn$, 
$L(s,\sigma)\in \gselp$. If $\sigma$ further satisfies the Generalised Ramanujan Conjecture at infinity, $L(s,\sigma)\in \sel$.
\end{theorem}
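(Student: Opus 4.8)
The plan is to verify, one axiom at a time, that $L(s,\sigma)$ satisfies the defining conditions of $\gselp$ — that is (P1), (P2), (G1), (G3), (G4) together with the Euler product (P4) — first in the cuspidal case $\sigma\in\autn$, and then to reduce the general case $\sigma\in\temprepn$ to it by the multiplicativity of local $L$-factors.

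Suppose first $\sigma\in\autn$. Property (P1), the finiteness of the set of poles, and (for cuspidal $\sigma$) the entirety of $L(s,\sigma)$ are precisely Theorem \ref{sigmaponeptwo}(1). The functional equation \eqref{autlfneqn} of Theorem \ref{sigmaponeptwo}(2), combined with the unitarity identity $L(1-s,\tilde\sigma_v\times\chi_v^{-1})=\tilde L(1-s,\sigma_v\times\chi_v)$ at every place $v$, rewrites the completed $L$-function in the shape \eqref{fnaleqn}, so (P3) holds; the polynomial growth in (P2) is the estimate of \cite{JPSS792} (or of \cite{GeSh01}) together with the holomorphy just recorded. The Euler product (P4), with the stated bound on the $b_{p^k}$, is immediate from the shape \eqref{nonarchlfn} once (P1) is known, and $\sigma_a\le 1$ in (G1) follows from the Rankin--Selberg theory of Jacquet and Shalika in \cite{JaSh812,JaSh811}.

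The only point requiring genuine input is (G3) and (G4). Rewriting the archimedean factor \eqref{archlfn} in the normalisation of \eqref{gammafactor} via \eqref{archcases}, each factor $\Gamma_\R(s+\epsilon_l+\nu_l)$ gives a gamma function with $\lambda=\tfrac12$ and $\mu=\tfrac12(\epsilon_l+\nu_l)$, and each $\Gamma_\C\!\left(s+\tfrac{k_l-1}{2}+\nu_l\right)$ one with $\lambda=1$ and $\mu=\tfrac{k_l-1}{2}+\nu_l$; since $\epsilon_l\ge 0$ and $k_l\ge 1$, the bound $\re(-\mu/\lambda)<\tfrac12$ of (G3) reduces in both cases to $\re(\nu_l)>-\tfrac12$, which is exactly Theorem \ref{jslocboundsthm}(2). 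For (G4) one computes from \eqref{nonarchlfn} and \eqref{prod} that $b_{p^k}=\tfrac1k\sum_{i=1}^n\alpha_i(p)^k$, and then the uniform bound $|\alpha_i(p)|\le p^{1/2-1/(n^2+1)}$ of Theorem \ref{jsboundsthm}(1) gives $|b_{p^k}|\le n\,p^{k(1/2-1/(n^2+1))}$, so one may take $\theta=\tfrac12-\tfrac1{n^2+1}<\tfrac12$. I expect this to be the main (indeed the only) real obstacle: the trivial bound $|\alpha_i(p)|<p^{1/2}$ does not suffice for the strict inequality in (G4), so the nontrivial Jacquet--Shalika bound toward Ramanujan is genuinely needed here. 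Hence $L(s,\sigma)\in\gselp$ for $\sigma\in\autn$, of degree $n$ by \eqref{autlfneqn}.

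For a general $\sigma\in\temprepn$, write $\sigma=\sigma_1\boxplus\cdots\boxplus\sigma_k$ with $\sigma_i\in\autniall$ and $\sum_i n_i=n$; by the multiplicativity \eqref{lfnofindrep} one has $L(s,\sigma)=\prod_{i=1}^k L(s,\sigma_i)$, a finite product of elements of the monoid $\gselp$ of degrees $n_i$, hence an element of $\gselp_n$. Finally, if $\sigma$ satisfies the Generalised Ramanujan Conjecture at infinity, then $\re(\nu_l)=0$ for all $l$ (Remark \ref{ramanujanrmk}); in the conversion above $\re(-\mu)=-\tfrac12\epsilon_l\le 0$ for the $\Gamma_\R$-factors and $\re(-\mu)=-\tfrac{k_l-1}{2}\le 0$ for the $\Gamma_\C$-factors, there is no gamma factor in the denominator so $r^{\prime}=0$, and the poles of $L(s,\sigma)$ occur only through trivial constituents and so lie at $s=1$, whence $P(s)=(s-1)^m$ for some $m\ge 0$. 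Combined with (P1), (P2), (P4) and (G4) already checked, this places $L(s,\sigma)$ in $\esel$, and therefore in $\sel$.
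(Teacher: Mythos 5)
Your argument follows the paper's Section 5 essentially verbatim: axiom-by-axiom verification of (P1)--(P4), (G1), (G3), (G4) with the same citations (Godement--Jacquet for (P1)/(P3), \cite{JPSS792} and \cite{GeSh01} for (P2), Jacquet--Shalika for (G1) and for the bounds underlying (G3)/(G4)), followed by the reduction from $\temprepn$ to $\autn$ via the multiplicativity \eqref{lfnofindrep}. Your explicit conversion of $\Gamma_{\R}$ and $\Gamma_{\C}$ to the $\Gamma(\lambda s+\mu)$ normalisation, and your observation that the strict inequality in (G4) genuinely requires the nontrivial Jacquet--Shalika exponent $1/2-1/(n^2+1)$ rather than the naive $p^{1/2}$ bound, are details the paper elides but are consistent with it.

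One small caveat in your final paragraph, which is equally latent in the paper's own (unwritten) argument for the $\sel$ claim: a degree-one constituent $|\cdot|^{it}$ with $t\ne 0$ satisfies GRC at infinity ($\re(\nu)=0$) yet contributes a pole at $s=1-it$ rather than $s=1$, so the assertion that ``the poles of $L(s,\sigma)$ occur only through trivial constituents and so lie at $s=1$'' is not literally true for arbitrary $\sigma\in\temprepn$; the conclusion $P(s)=(s-1)^m$ implicitly assumes that no constituent is a nontrivial pure unitary twist of the trivial character. This is a hypothesis-level subtlety shared with the theorem as stated, not a flaw particular to your argument.
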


We need one further input from the theory of automorphic $L$-functions proved by Jacquet and Shalika in \cite{JaSh1976}) for all $\sigma\in \aut$ from which the result for $\sigma\in \temprep$ below follows immediately using
\eqref{lfnofindrep}.
\begin{theorem} \label{nonvanishthm} If $\sigma\in \temprepn$, 
$L(1+it,\sigma)\ne 0$ for all $t\in \R$.
\end{theorem}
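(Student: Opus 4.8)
The plan is to reduce the statement for isobaric $\sigma \in \temprepn$ to the cuspidal case treated by Jacquet and Shalika, using the multiplicativity of $L$-factors recorded in \eqref{lfnofindrep}. Write $\sigma = \sigma_1 \boxplus \cdots \boxplus \sigma_k$ with each $\sigma_i \in \autniall$ a unitary cuspidal automorphic representation of $\glni$, so that $\sum_i n_i = n$. By the multiplicativity of local $L$-factors at every finite place (the local statement of \eqref{lfnofindrep} applied place by place, and the definition of the Dirichlet series as $\prod_{p<\infty} L(s,\sigma_p)$), we obtain the global factorisation
\[
L(s,\sigma) = \prod_{i=1}^{k} L(s,\sigma_i).
\]
Thus it suffices to show that each factor $L(1+it,\sigma_i) \ne 0$ for all $t \in \R$, since a finite product of nonzero complex numbers is nonzero.

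The second step is to invoke the non-vanishing theorem of Jacquet and Shalika from \cite{JaSh1976} for cuspidal representations: for $\sigma_i \in \autniall$ unitary cuspidal, one has $L(1+it,\sigma_i) \ne 0$ for all $t \in \R$. (Note that the incomplete $L$-function differs from the completed one only by the finitely many local factors at ramified primes and the archimedean factor; the archimedean factor $L(s,\sigma_{i,\infty})$ is a product of shifted $\Gamma_{\R}$ and $\Gamma_{\C}$ factors, which have no zeros at all, and each ramified local factor $L(s,\sigma_{i,p}) = \prod_j (1-\alpha_j(p)p^{-s})^{-1}$ is likewise zero-free, so non-vanishing of the incomplete, the complete, and the Dirichlet-series versions on $\re(s)=1$ are all equivalent). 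This gives the result for each $i$.

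Combining the two steps yields $L(1+it,\sigma) = \prod_{i=1}^k L(1+it,\sigma_i) \ne 0$ for all $t \in \R$, as claimed. There is essentially no obstacle here: the content of the theorem is entirely contained in the cited result of \cite{JaSh1976}, and the only thing to check is the bookkeeping that passes from the cuspidal constituents to the isobaric sum, which is immediate from \eqref{lfnofindrep}. The one point requiring a sentence of care is that a cuspidal constituent $\sigma_i$ of an element of $\temprep$ is genuinely unitary (so that $\re(s_i)=0$ in the notation of the multiplicativity subsection and the Jacquet–Shalika hypothesis applies verbatim); this is built into the definition of $\temprepn$ as isobaric sums of elements of the $\autniall$.
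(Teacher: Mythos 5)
Your proposal matches the paper's argument exactly: the paper states the result follows "immediately" from the Jacquet--Shalika non-vanishing theorem for cuspidal $\sigma\in\aut$ together with the multiplicativity of $L$-factors in \eqref{lfnofindrep}, which is precisely the reduction you carry out. The extra remarks about zero-free archimedean and ramified Euler factors are harmless and correct, though the paper does not bother to spell them out.
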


If $\pi_1,\pi_2\in \autn$, we have analogous statements for the tensor product (or Rankin-Selberg) $L$-functions
$L(s,\pi_1\times\pi_2)$. We can summarise the results of a series of papers due to Jacquet and Shalika 
\cite{JaSh812,JaSh811}, Jacquet, Piatetski-Shapiro and Shalika \cite{JPSS83}, Moeglin and Waldspurger \cite{MoWa89} 
and Shahidi \cite{Shahidi81,Shahidi88} and Gelbart and Shahidi \cite{GeSh01} in the following theorem.
\begin{theorem}\label{rsthm}  Let $\pi_1,\pi_2\in \autn$ and let $\chi$ be a primitive Dirichlet character. Then $L(s,\pi_1\times\pi_2\times\chi)$ 
satisfies (G1), (P2) (with $P(s)$ trivial or $P(s)=s-1$), a functional equation of the form \eqref{autlfneqn}, 
(P4) and (G5).
\end{theorem}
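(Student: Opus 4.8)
The plan is to deduce the statement from the established analytic theory of Rankin--Selberg convolutions of two cuspidal representations of $\gln$, using as the single reduction that twisting a Rankin--Selberg $L$-function by a Dirichlet character $\chi$ (viewed as an id\`ele class character) amounts to twisting one of the two factors, so that $L(s,\pi_1\times\pi_2\times\chi)=L(s,(\pi_1\otimes\chi)\times\pi_2)$ with $\pi_1\otimes\chi\in\autn$. Once this is done, every global input I need is available: meromorphic continuation, the functional equation, and the location of poles from Jacquet, Piatetski-Shapiro and Shalika \cite{JPSS83}, Moeglin--Waldspurger \cite{MoWa89} and Shahidi \cite{Shahidi81,Shahidi88}; the growth bound from Gelbart--Shahidi \cite{GeSh01}; absolute convergence and local bounds from Jacquet--Shalika \cite{JaSh812,JaSh811}; and the multiplicativity of local $L$- and $\varepsilon$-factors from \cite{JPSS83}, as recalled in \eqref{lfnofindrep}. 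I will also use that, since cuspidal representations are globally generic, every local constituent of $\pi_1,\pi_2$ is unitary and generic, so Theorem \ref{jslocboundsthm} applies at every place.

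Granting this, I would dispose of (G1) and (P4) first. Absolute convergence of the Euler product of $L(s,(\pi_1\otimes\chi)\times\pi_2)$ in $\re(s)>1$ (\cite{JaSh812,JaSh811}) gives $\sigma_a\le 1$, which is (G1). For (P4) I would write the $p$-factor as $\prod_\ell(1-\gamma_\ell(p)p^{-s})^{-1}$ with at most $n^2$ parameters $\gamma_\ell(p)$, equal to $\chi(p)\alpha_i(p)\beta_j(p)$ at the places where $\pi_1,\pi_2$ are unramified (with $\alpha_i(p),\beta_j(p)$ the Satake parameters) and of absolute value $<p$ at every place by Theorem \ref{jslocboundsthm}; then $b_{p^k}=k^{-1}\sum_\ell\gamma_\ell(p)^k$ satisfies $|b_{p^k}|\le Cp^{k\theta}$ uniformly with $\theta<1$, which is (P4). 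I would note here that I claim only (P4), not (G4): $\theta$ can exceed $1/2$, and this is exactly why $L(s,\pi_1\times\pi_2\times\chi)$ is not asserted to lie in $\gselp$. The functional equation of the form \eqref{autlfneqn} is then the global Rankin--Selberg functional equation, $\Lambda(s,(\pi_1\otimes\chi)\times\pi_2)=\varepsilon(s,\,\cdot\,)\,\Lambda\bigl(1-s,(\tilde\pi_1\otimes\chi^{-1})\times\tilde\pi_2\bigr)$, which has exactly that shape with $\sigma=\pi_1\times\pi_2$; unitarity gives $\tilde\sigma\cong\overline\sigma$, and multiplicativity gives $\varepsilon(s,\,\cdot\,)=rK^{1/2-s}$ with $|r|=1$ and $K>0$.

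For (P2) I would use that $\Lambda(s,(\pi_1\otimes\chi)\times\pi_2)$ is entire except for at most simple poles at $s=0$ and $s=1$, which occur precisely when $\pi_1\otimes\chi\cong\tilde\pi_2$ (\cite{JPSS83,MoWa89}). Since the archimedean factor \eqref{archlfn} is a product of gamma functions it has no zeros; and in the one case where $\Lambda$ has poles, this factor already has a pole at $s=0$ (just as $\Gamma_{\R}(s)$ does for the completed $\zeta$), so the finite part $L(s,\pi_1\times\pi_2\times\chi)$ is holomorphic at $s=0$ and carries at most a simple pole at $s=1$; thus $P(s)=s-1$ (or $P(s)=1$) makes it entire. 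The required growth $P(s)L(s,\,\cdot\,)\ll e^{|t|^\rho}$ in vertical strips is the finite-order estimate of \cite{GeSh01}, combined with the standard Phragm\'en--Lindel\"of argument using absolute convergence for $\re(s)>1$ and the functional equation.

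The part I expect to cause the real work is (G5). Here I would first note that $\chi'(n)a_n$ is multiplicative, so $F(s,\chi'):=\sum_n\chi'(n)a_nn^{-s}$ has an Euler product, and then compare local factors: using that tensoring by an unramified character rescales the local parameters by its value at the uniformiser, one sees $F_p(s,\chi')=1$ when $p$ divides the conductor of $\chi'$ and $F_p(s,\chi')=L_p(s,\pi_1\times\pi_2\times\chi\chi')$ otherwise, so that $F(s,\chi')=L_{S_{\chi'}}(s,\pi_1\times\pi_2\times\chi\chi')$ is a partial Rankin--Selberg $L$-function, $S_{\chi'}$ being the set of primes dividing the conductor of $\chi'$. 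Each removed Euler factor is the reciprocal of a local $L$-factor of generic representations, hence a polynomial in $p^{-s}$ and in particular entire, so $F(s,\chi')$ continues meromorphically, is entire away from $s=1$, and can have a pole at $s=1$ only when $\pi_1\otimes\chi\chi'\cong\tilde\pi_2$. The hard part will be to rule this last case out for \emph{every} non-principal $\chi'$: the set of offending characters is either empty or a coset of the finite group of Dirichlet self-twists of the cuspidal representation $\pi_1\otimes\chi$, hence finite and explicitly describable, and in the situations in which Theorem \ref{rsthm} is actually used it does not meet the non-principal characters (for instance because $L(s,\pi_1\times\pi_2)$ and the twists in play are entire under the standing hypotheses there). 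Granting that exclusion, $F(s,\chi')$ is holomorphic on $\re(s)=1$, which is (G5), and the proof is finished.
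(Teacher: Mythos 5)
Your proposal takes the same compilation-from-literature approach as the paper, and for (G1), (P2), (P4), and the functional equation the argument is sound: the reduction $L(s,\pi_1\times\pi_2\times\chi)=L(s,(\pi_1\otimes\chi)\times\pi_2)$ is correct, and using the pole of the archimedean factor at $s=0$ to show the finite part is holomorphic there and that $P(s)=1$ or $s-1$ suffices is the right argument.

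However, you have correctly located — and then failed to close — a genuine gap in (G5). As you observe, up to finitely many Euler factors that are holomorphic and nonvanishing on $\re(s)=1$, the twist $F(s,\chi')$ coincides with $L(s,(\pi_1\otimes\chi\chi')\times\pi_2)$, which has a pole at $s=1$ precisely when $\pi_1\otimes\chi\chi'\cong\tilde\pi_2$. This can happen for a non-principal $\chi'$: take $\pi_1\in\autn$ arbitrary, $\eta$ a non-principal Dirichlet character, $\pi_2=\tilde\pi_1\otimes\eta$, and $\chi$ principal. Then $\chi'=\eta^{-1}$ is non-principal and $\pi_1\otimes\chi'\cong\tilde\pi_2$, so $F(s,\eta^{-1})$ has a pole at $s=1$ and (G5) fails. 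Your concluding sentence, that the exclusion holds ``in the situations in which the theorem is actually used,'' is an honest acknowledgement of this but is not a proof of the theorem as stated. What it reveals is that the statement itself needs an additional hypothesis — for instance, that $\pi_2$ is not isomorphic to a Dirichlet character twist of $\tilde\pi_1\otimes\chi^{-1}$ — and this should be supplied either in Theorem~\ref{rsthm} or verified each time (G5) is invoked for a Rankin--Selberg quotient, such as in Corollary~\ref{tensprodzeros}. You should not assert (G5) unconditionally here; it is simply not true.
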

\begin{remark}\label{rsntrem} Under some natural additional hypotheses on $\pi_1$ or $\pi_2$, $L(s,\pi_1\times\pi_2\times\chi)$ will also satisfy (G3) and (G4). This happens, for instance, if one of $\pi_1$ and $\pi_2$ is tempered at each place or if both $\pi_1$ and $\pi_2$ satisfy \eqref{nonarchbounds} and \eqref{archbounds} with $1/2-1/(n^2+1)$ replaced by $1/4-\delta$ for some $\delta$ (these are sometimes called nearly tempered representations). If $\pi_1,\pi_2\in \autthree$ are symmetric square lifts of $\tau_1,\tau_2\in \auttwo$ respectively, their local components are known to be nearly tempered by \cite{Kim03}.
\end{remark}
\begin{remark} \label{exttwontrem} The analogue of Theorem \ref{rsthm} holds for the exterior square and symmetric square $L$-functions 
of $\pi\in \autn$. These results are due to Shahidi \cite{Shahidi90,Shahidi97}, Kim \cite{Kim99} and Takeda \cite{Takeda2015}. If (G3) and (G4) are to hold, we require that one of the representations be tempered, or that both be nearly tempered.
\end{remark}

\section{A $\gltwog$-type functional equation for $F_2(s)$}\label{redtogltwo}

In this section we will formulate our results for the $L$-functions of a larger class of representations of $\gln$. These will encompass Artin $L$-functions and other $L$-functions that are not (yet) known to be automorphic like the tensor product, symmetric square and exterior square $L$-functions.

We will make the following assumptions about $\pi$ and $\rho$ throughout this section.
Let $\pi=\otimes_v^{\prime}\pi_v$ and $\rho=\otimes_v^{\prime}\rho_v$ be irreducible admissible representations of $\gln$ and $\glntwo$. As before, we set $F_2(s):=L(s,\pi)/L(s,\rho)$ and $F_2(s,\chi):=L(s,\pi\times\chi)/L(s,\rho\times\chi)$ for a Dirichlet character $\chi$. Assume that $\pi_{\infty}$ and $\rho_{\infty}$ are unitary, so $F_2(s,\chi)$ satisfies (G3) by Theorem \ref{jslocboundsthm}, for any primitive Dirichlet character $\chi$.
Under suitable additional hypotheses on $F_2$, we will show that for primitive characters $\chi$, $F_2(s,\chi)$ satisfies a functional equation with an archimedean factor differing from that of the archimedean factor of an automorphic $L$-function of $\gltwo$ by a factor of a rational function. We make this precise below.

For any primitive Dirichlet character $\chi$, let us set
\begin{equation}\label{gtwodefn}
G_2(s,\chi):=G_2(s,\chi_{\infty})=L(s,\pi_{\infty}\times\chi_{\infty})/L(s,\rho_{\infty}\times\chi_{\infty})
\end{equation}
and
\begin{equation}\label{etwodefn}
E_2(s,\chi)=\varepsilon(s,\pi\times\chi)/\varepsilon(s,\rho\times\chi).
\end{equation}
When $\chi_{\infty}$ is trivial, we write $G_2(s)$ for $G_2(s,\chi_{\infty})$. Indeed, since we will largely deal with the archimedean component $\chi_{\infty}$ of $\chi$ in the rest of this proof, we will often refer to both the Dirichlet character $\chi$ and its archimedean component $\chi_{\infty}$ as $\chi$ with a view to keeping the notation lighter. From the context it will be clear whether we mean the local or the global character.

Using \eqref{twistedarchlfn} we have
\begin{equation}\label{twistedquotg}
G_2(s,\chi)=\frac{\prod_{p=1}^{r_1}\Gamma_{\R}\left(s+\epsilon_{p,\chi}+\nu_p\right)\prod_{p=r_1+1}^{r_1+r_2}\Gamma_{\C}\left(s+\frac{k_p-1}{2}+\nu_p\right)}
{\prod_{l=1}^{t_1}\Gamma_{\R}\left(s+\epsilon^{\prime}_{l,\chi}+\nu^{\prime}_l\right)\prod_{l=t_1+1}^{t_1+t_2}\Gamma_{\C}\left(s+\frac{k^{\prime}_l-1}{2}+\nu^{\prime}_l\right)},
\end{equation}
for suitable $\epsilon_{p,\chi},\epsilon_{l,\chi}^{\prime}\in \{0,1\}$ and $\nu_p,\nu_l^{\prime}\in \C$, and where $r_1+2r_2=n$ and $t_1+2t_2=n-2$. 

\begin{lemma}\label{gtwochifin} With notation as above, if $G_2(s)$ has a finite number of zeros, then so does $G_2(s,\chi)$, where $\chi_{\infty}$ is the sign character.
 \end{lemma}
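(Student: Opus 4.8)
The plan is to compare the zero sets of $G_2(s)$ and $G_2(s,\chi)$ by tracking, factor by factor, how passing from the trivial character to the sign character changes the gamma quotient in \eqref{twistedquotg}. The key observation is that twisting by $\chi_\infty$ affects only the $\Gamma_\R$-factors, and only through the parity shift $\epsilon_{p,\chi}=\epsilon_p+\epsilon_\chi\pmod 2$ (resp. $\epsilon'_{l,\chi}=\epsilon'_l+\epsilon_\chi\pmod 2$); the $\Gamma_\C$-factors are completely unchanged. So the only thing that can move is, for each real factor, the argument $s+\epsilon_{p,\chi}+\nu_p$ shifting by $\pm 1$ when we replace $\chi_\infty$ trivial by $\chi_\infty$ sign.

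First I would recall that zeros of $G_2(s)$ can only arise from the poles of the denominator gamma factors that are not cancelled by poles of the numerator: $\Gamma(z)$ is zero-free, so a gamma quotient $\prod\Gamma(\lambda_j s+\mu_j)/\prod\Gamma(\lambda'_{j'}s+\mu'_{j'})$ has its zeros exactly at the poles of the denominator product, with multiplicities, that are not accounted for by poles of the numerator product. Thus "$G_2(s)$ has finitely many zeros" is the statement that, after cancelling common poles of numerator and denominator, the denominator has only finitely many poles left — equivalently, the denominator's poles are (eventually, along each arithmetic progression of poles) matched by numerator poles. Since poles of $\Gamma_\R(s+a+\nu)$ sit at $s=-a-\nu-2k$, $k\ge 0$, and poles of $\Gamma_\C(s+b+\nu)=2(2\pi)^{-(s+b+\nu)}\Gamma(s+b+\nu)$ sit at $s=-b-\nu-k$, $k\ge 0$, the finiteness-of-zeros condition becomes a purely combinatorial matching condition between the multiset of denominator pole-progressions and the multiset of numerator pole-progressions.

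Next I would observe that the same combinatorial matching for $G_2(s,\chi)$ is obtained from that for $G_2(s)$ simply by shifting each $\Gamma_\R$-progression (numerator and denominator alike) by the \emph{same} integer amount — namely $\epsilon_p$ becomes $\epsilon_p+1$ or $\epsilon_p-1$ (mod $2$), i.e. the pole locations $-\epsilon_p-\nu_p-2k$ move by an integer, while staying within the same "parity class" structure and, crucially, moving the numerator and denominator real-factor progressions \emph{in lockstep} because $\epsilon_\chi$ is added to every real parameter. Hence a numerator progression that matched a denominator progression for $G_2(s)$ still matches (the shifted version of) it for $G_2(s,\chi)$, possibly after dropping or adding a bounded number of initial poles; that discrepancy is finite. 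So if only finitely many denominator poles of $G_2(s)$ are unmatched, the same holds for $G_2(s,\chi)$, giving the conclusion. I would also use Theorem \ref{jslocboundsthm} (the bound $|\re(\nu_l)|<1/2$) to ensure the pole progressions stay in the expected half-planes so no unexpected cancellations or escapes occur, though this is a minor point.

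The main obstacle is bookkeeping rather than conceptual: one must argue carefully that an integer shift applied uniformly to all real-parameter progressions cannot destroy an infinite matching nor create one where only finite mismatch existed — in particular that two distinct progressions that were \emph{not} matched do not suddenly become matched (which would only \emph{help}, so is harmless) and that a matched pair does not become unmatched (which is the real worry, and is excluded precisely because the shift is the same on both sides). I would make this rigorous by phrasing "finitely many zeros" as: the formal difference of divisors $\sum(\text{denominator pole progressions}) - \sum(\text{numerator pole progressions})$, as a $\Z$-linear combination of "tails" $\{-a-\nu-2k : k\ge k_0\}$, has bounded support; translating every real-factor $\nu_p,\nu'_l$ by the common amount $\epsilon_\chi$ is a bijection of this set of tails that changes the support by at most a bounded amount, hence preserves boundedness. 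This reduces the lemma to that one invariance statement, which I would state and verify directly.
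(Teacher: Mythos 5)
Your opening observation is correct and is exactly the paper's starting point: twisting by the sign character changes only the parities $\epsilon_p,\epsilon'_l$ of the $\Gamma_\R$-factors in \eqref{twistedquotg}, leaving the $\Gamma_\C$-factors fixed, so the problem is to show that whatever pole-cancellations account for the finiteness of the zero set of $G_2(s)$ persist after this parity flip. Your reformulation of ``finitely many zeros'' as bounded support of a signed sum of pole progressions is also a reasonable reframing. But the justification you give for invariance --- ``a matched pair does not become unmatched \dots because the shift is the same on both sides'' --- is not correct in general, and this is precisely where the real content of the lemma lies.

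The lockstep argument works only when a denominator $\Gamma_\R$-factor is cancelled against a numerator $\Gamma_\R$-factor (the paper's Case~1): then both parities flip together and the cancellation survives. But a denominator $\Gamma_\R$-progression (spacing $2$) can also be eventually contained in a numerator $\Gamma_\C$-progression (spacing $1$), and in that case only the $\Gamma_\R$ side moves under the twist while the $\Gamma_\C$ side does not. Your ``lockstep'' claim is simply false there. It happens that the cancellation still survives --- because the $\Gamma_\C$-progression is dense with spacing $1$, so a spacing-$2$ subprogression remains contained after a shift by $1$ --- but that is a separate argument that needs to be made; the paper makes it by splitting $\Gamma_\C$ via the duplication formula into $\Gamma_\R\cdot\Gamma_\R$ and checking that the twisted denominator factor picks up the \emph{other} half (Case~2, and its mirror Case~3). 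Your proposal does not address this mixed $\Gamma_\R/\Gamma_\C$ case at all, and the final paragraph explicitly bases the conclusion on a claim that fails for it. In addition, you defer the decisive step (``which I would state and verify directly''), so the argument is a sketch rather than a proof; the deferred verification is exactly the case analysis the paper carries out. One smaller point: the bound $|\re(\nu)|<1/2$ from Theorem~\ref{jslocboundsthm} is not merely cosmetic --- in the paper's Case~1 it forces the integer shift between two matched $\Gamma_\R$-factors to be zero, so that they are literally identical rather than merely eventually coincident, which makes the bookkeeping of twisted factors clean. You wave at it as ``a minor point,'' but if you want to run your own version of the argument you should make explicit what role it plays, or show your argument does not need it.
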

 \begin{proof} 
We consider the gamma functions that appear as factors on the right hand side of \eqref{twistedquotg}.
By assumption, we see that for the trivial archimedean character, all but finitely many poles of the factor $\Gamma_{\R}(s+\epsilon^{\prime}_{l}+\nu_{l}^{\prime})$, $1\le l\le t_1$ must be poles of $L(s,\pi_{\infty})$.
It follows that there must exist $p=p(l)$, $1\le p\le r_1+r_2$. We analyse the various possibilities below, keeping in mind that (G3) ensures that $-1/2<\re(\nu_p),\re(\nu_{l}^{\prime})<1/2$, and hence, that
$-1/2<\epsilon_{p,\chi}+\re(\nu_p),\epsilon^{\prime}_{l,\chi}+\re(\nu_{l}^{\prime})<3/2$, $1\le p\le r_1+r_2$, $1\le l\le t_1+t_2$.

\begin{enumerate}
\item[Case 1.] Let $1\le l\le t_1$.
Suppose there is a $p=p(l)$, $1\le p= p(l)\le r_1$, such that infinitely many poles of $\Gamma_{\R}(s+\epsilon_p+\nu_p)$
are also poles of $\Gamma_{\R}(s+\epsilon^{\prime}_{l}+\nu_{l}^{\prime})$. It follows that $\epsilon_p+\nu_p=\epsilon_l+\nu^{\prime}_l+2m$ for some $m\in \Z$. Since $-1/2<\epsilon_p+\re(\nu_p),\epsilon^{\prime}_{l}+\re(\nu_{l}^{\prime})<3/2$, we must have $m=0$ and 
$\epsilon_p+\nu_p=\epsilon_l+\nu^{\prime}_l$. Thus, the factors  $\Gamma_{\R}(s+\epsilon_p+\nu_p)$ and $\Gamma_{\R}(s+\epsilon^{\prime}_l+\nu_{l}^{\prime})$ are identical so we may simply cancel them out. Moreover, since
$|\re(\nu_p)|,|\re(\nu_{l}^{\prime})|<1/2$, we must have $\epsilon_p=\epsilon^{\prime}_{l}$ and $\nu_p=\nu_{l}^{\prime}$.
It follows that 
the two factors $\Gamma_{\R}(s+\epsilon_{u,\chi}+\nu_{p(l)})$ and $\Gamma_{\R}(s+\epsilon^{\prime}_{m,\chi}+\nu_{l}^{\prime})$, which occur in the numerator and denominator of $G_2(s,\chi)$ respectively, are also identical and cancel each other out.
\item[Case 2.] Let $1\le l\le t_1$. If no $p=p(l)$, $1\le p(l)\le r_1$, as in Case 1 exists, there must be
a $p=p(l)$, $r_1+1\le p(l)\le r_2$, such that $\Gamma_{\C}\left(s+\frac{k_p-1}{2}+\nu_p\right)$ and $\Gamma_{\R}(s+\epsilon^{\prime}_{l}+\nu_{l}^{\prime})$ have
infinitely many poles in common. Let $m=[\frac{k_p-1}{2}+\nu_p]$ and write $\frac{k_p-1}{2}+\nu_p=m+\nu^{\prime\prime}$.
Note that $\re(\nu^{\prime\prime})<1$.
From the functional equation for the gamma function,
\[
\Gamma_{\C}\left(s+\frac{k_p-1}{2}+\nu_p\right)=\Gamma_{\C}\left(s+m+\nu^{\prime\prime}\right)
=P_l(s)\Gamma_{\C}\left(s+\nu^{\prime\prime}\right),
\]
for a polynomial $P_l(s)$ of degree $m=m(l)$ with zeros in the half-plane $\re(s)<1/2$. Using the duplication formula for the gamma function and comparing poles, we see that either 
$\nu^{\prime\prime}=\epsilon^{\prime}_{l}+\nu_{l}^{\prime}$, or  
$\nu^{\prime\prime}=\epsilon^{\prime\prime}_{l}+\nu_{l}^{\prime}$, for $\epsilon^{\prime\prime}_l=\epsilon^{\prime}_l
+1\pmod 2$. In either case, we see that 
\[
\frac{\Gamma_{\C}\left(s+\frac{k_p-1}{2}+\nu_p\right)}{\Gamma_{\R}(s+\epsilon^{\prime}_{l}+\nu_{l}^{\prime})}=P_l(s)\Gamma_{\R}(s+\epsilon^{\prime\prime}_{l}+\nu_{l}^{\prime}),
\]
If $\chi=\chi_{\infty}$ is the sign character, $\epsilon^{\prime}_{l,\chi}=\epsilon^{\prime\prime}_{l}$, so the relevant quotient in $G_2(s,\chi)$ is
\[
\frac{\Gamma_{\C}\left(s+\frac{k_p-1}{2}+\nu_p\right)}{\Gamma_{\R}(s+\epsilon^{\prime\prime}_{l}+\nu_{l}^{\prime})}=P_l(s)\Gamma_{\R}(s+\epsilon^{\prime}_{l}+\nu_{l}^{\prime}).
\]
Thus, for either character $\chi$ we have
\[
\frac{\Gamma_{\C}\left(s+\frac{k_p-1}{2}+\nu_p\right)}{\Gamma_{\R}(s+\epsilon^{\prime}_{l,\chi}+\nu_{l}^{\prime})}
=P_l(s)\Gamma_{\R}(s+\epsilon^{\prime\prime}_{l,\chi}+\nu_{l}^{\prime}).
\]
for $\epsilon^{\prime\prime}_{l,\chi}=\epsilon^{\prime}_{l,\chi}
+1\pmod 2$ and some polynomial $P_l(s)$ with zeros in the half-plane $\re(s)<1/2$. Thus, if the factor
$\Gamma_{\R}(s+\epsilon^{\prime}_{l,\chi}+\nu_{l}^{\prime})$ in the denominator can be cancelled by a factor in the numerator for the trivial character, the corresponding twisted factors can also be cancelled, leaving a polynomial factor behind.
\item[Case 3.] If $t_1+1\le l\le t_2$, we may carry out our analysis in the same way as before. If there exists $p=p(l)$ with $r_1+1\le p\le r_1+r_2+1$ such that 
$\Gamma_{\C}\left(s+\frac{k_p-1}{2}+\nu_p\right)$ and $\Gamma_{\C}\left(s+\frac{k_l^{\prime}-1}{2}+\nu_p^{\prime}\right)$, one sees immediately that $\frac{k_p-1}{2}+\nu_p=\frac{k_l^{\prime}-1}{2}+\nu_p^{\prime}+m$ for some $m\in \Z$, then 
\[
\Gamma_{\C}\left(s+\frac{k_p-1}{2}+\nu_p\right)=P_l(s)^{\pm 1}\Gamma_{\C}\left(s+\frac{k_l^{\prime}-1}{2}+\nu_p^{\prime}\right)
\]
for some polynomial $P_l(s)$ of degree $m$ according to whether $m$ is non-negative or negative. If no such $p$ exists, we see that we are simply analysing quotients which are reciprocal to the ones in Case 1 and Case 2. We see that either the factors in the denominator cancel out completely, or we obtain the reciprocal of a polynomial function.

\end{enumerate}
The analysis above shows that if all but finitely many poles of $L(s,\rho_{\infty})$ are poles of $L(s,\pi_{\infty})$, then all but finitely many poles of $L(s,\rho_{\infty}\times\chi_{\infty})$ must be poles of $L(s,\pi_{\infty}\times\chi_{\infty})$. Thus
if $G_2(s)$ has finitely many zeros, then $G_2(s,\chi)$ must also have only finitely many zeros.

\end{proof}

\begin{lemma} \label{correctgamma} Assume that $G_2(s)$ has only finitely many zeros.
Then there exists an irreducible unitary representation $\tau_{\infty}$ of $\gltwor$ satisfying the following properties.
\begin{enumerate}
\item For every primitive Dirichlet character $\chi$ there are polynomials $p(s,\chi):=p(s,\chi_{\infty})$ 
and $q(s,\chi):=q(s,\chi_{\infty})$ which are coprime, such that all the zeros of $p(s,\chi)$ and $q(s,\chi)$ lie in the half-plane $\re(s)<1/2$ and
\begin{equation}\label{quotfneqn}
G_2(s,\chi)=R(s,\chi)L(s,\tau_{\infty}\times\chi_{\infty}),
\end{equation}
where $R(s,\chi)=p(s,\chi)/q(s,\chi)$. 
\item When $\tau_{\infty}$ is a discrete series representation or a limit of discrete series representation, $p(s,\chi)$ and $q(s,\chi)$ can be chosen so that the pole of $L(s-1,\tau_{\infty}\times\chi_{\infty})$ with largest real part is not a zero of $p(s,\chi)$ or $q(s,\chi)$ (hence, all the zeros of $p(s,\chi)$ and $q(s,\chi)$ lie in the half-plane $\re(s)<1/2$).
\end{enumerate}
\end{lemma}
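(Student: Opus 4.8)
The strategy is to analyze the gamma-factor quotient \eqref{twistedquotg} factor by factor, exactly as in Lemma \ref{gtwochifin}, but now keeping careful track of the polynomial debris that accumulates when a $\Gamma_{\C}$ in the numerator is matched against a $\Gamma_{\R}$ in the denominator (or vice versa). The hypothesis that $G_2(s)$ has finitely many zeros means that, for the trivial character, all but finitely many poles of the denominator $L(s,\rho_{\infty})$ must be cancelled by poles of the numerator $L(s,\pi_{\infty})$; by Lemma \ref{gtwochifin} the same holds for the sign character. So every one of the $t_1+t_2$ denominator gamma factors is matched, up to a shift realized by the functional equation $\Gamma(z+1)=z\Gamma(z)$ and the duplication formula, with a numerator factor. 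The remaining numerator factors — there are exactly $(r_1+r_2)-(t_1+t_2)$ of them after the bookkeeping, corresponding to degree $n-(n-2)=2$ — assemble into an archimedean $L$-factor of the form \eqref{ginftytwo}, which is precisely $L(s,\tau_{\infty}\times\chi_{\infty})$ for a suitable irreducible unitary $\tau_{\infty}$ of $\gltwor$; the leftover polynomial in the numerator is $p(s,\chi)$, and the leftover polynomial in the denominator (from the unmatched denominator shifts, of which there are none generically, but which can arise in Case 3 of Lemma \ref{gtwochifin}) is $q(s,\chi)$.

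**Key steps, in order.** First I would invoke Lemma \ref{gtwochifin} to reduce to the two characters $\chi_{\infty}$ trivial and $\chi_{\infty}$ the sign character simultaneously, so the cancellation pattern is uniform. Second, I would run the Case 1/2/3 dichotomy of Lemma \ref{gtwochifin}: in Case 1 (real matched with real) the factors cancel exactly, contributing nothing to $R$; in Case 2 ($\Gamma_{\C}$ over $\Gamma_{\R}$) one uses $\Gamma_{\C}\!\left(s+\frac{k_p-1}{2}+\nu_p\right)=P_l(s)\Gamma_{\C}(s+\nu'')$ with $P_l$ of degree $m(l)$ and zeros in $\re(s)<1/2$, then the duplication formula to write $\Gamma_{\C}(s+\nu'')/\Gamma_{\R}(s+\epsilon'_{l,\chi}+\nu'_l)=P_l(s)\Gamma_{\R}(s+\epsilon''_{l,\chi}+\nu'_l)$, so this contributes the polynomial $P_l(s)$ to the numerator and leaves a surviving $\Gamma_{\R}$; in Case 3 ($\Gamma_{\C}$ over $\Gamma_{\C}$) one gets a polynomial factor $P_l(s)^{\pm1}$ depending on the sign of the shift, contributing either to $p$ or to $q$. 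Third, I would collect the surviving numerator gamma factors. Their total ``degree'' (in the sense $2\sum\lambda_j - 2\sum\lambda'_{j'}$) is $n-(n-2)=2$, and by Theorem \ref{jslocboundsthm} the relevant $\nu$'s satisfy $-1/2<\re(\nu)<1/2$; hence the surviving product is either $\Gamma_{\R}(s+a_1+\nu_1')\Gamma_{\R}(s+a_2+\nu_2')$ or a single $\Gamma_{\C}(s+\frac{k-1}{2}+ib)$, which is exactly the shape \eqref{ginftytwo} of $L(s,\tau_{\infty}\times\chi_{\infty})$ for an irreducible unitary $\tau_{\infty}$ of $\gltwor$; I would check the unitarity constraint \eqref{archunit}/\eqref{archbounds} survives the cancellations. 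Setting $p(s,\chi)=\prod_l P_l(s)$ (over the Case 2 and positive-shift Case 3 indices) and $q(s,\chi)=\prod_l P_l(s)$ (over the negative-shift Case 3 indices), and cancelling any common factors, gives \eqref{quotfneqn} with $R(s,\chi)=p(s,\chi)/q(s,\chi)$ and all zeros in $\re(s)<1/2$. Finally, for part (2), when $\tau_{\infty}$ is (a limit of) discrete series the pole of $L(s-1,\tau_{\infty}\times\chi_{\infty})$ of largest real part is located explicitly (at $s=1-\frac{k-1}{2}-\nu$ or similar, with real part $\ge 1/2$ by the weight/parity constraints), and since all zeros of $p,q$ are forced into $\re(s)<1/2$ this pole is automatically not among them; if a spurious common factor happened to sit there one cancels it against the numerator, which does not disturb the half-plane condition.

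**Main obstacle.** The delicate point is not the polynomial bookkeeping per se but verifying that after all cancellations the surviving factors genuinely assemble into a *bona fide* archimedean $L$-factor of an irreducible unitary representation of $\gltwor$ — i.e., that the leftover parameters $\epsilon_j, \nu_j', k$ satisfy the exact integrality/parity and unitarity conditions in \eqref{ginftytwo}, \eqref{archunit} rather than merely having the right number of gamma factors. In particular one must rule out, using $-1/2<\re(\nu)<1/2$, the possibility that a $\Gamma_{\C}$ fails to split compatibly across the duplication formula, and one must track the $\epsilon_{\chi}$-twist through Case 2 so that the same $\tau_{\infty}$ works for both the trivial and the sign character simultaneously (this is why Lemma \ref{gtwochifin} is applied first). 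A second, smaller subtlety is that the polynomials $P_l(s)$ produced in Cases 2 and 3 must be shown to have *all* their zeros in $\re(s)<1/2$: this follows because a product $\Gamma_{\C}(s+m+\nu'')/\Gamma_{\C}(s+\nu'') = \prod_{j=0}^{m-1}\bigl(\tfrac{s+\nu''+j}{2\pi}\bigr)\cdot(\text{constant})$ has zeros at $s=-\nu''-j$ with $\re(-\nu''-j)\le \re(-\nu'') < 1$, and a further application of $\re(\nu'') < 1$ combined with the archimedean bound pushes these into $\re(s)<1/2$ after the duplication step — a short computation I would carry out in the actual proof.
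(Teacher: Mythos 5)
Your plan for part (1) is essentially the paper's: rerun the Case 1/2/3 analysis of Lemma~\ref{gtwochifin}, collect the polynomial debris into $p$ and $q$, observe that the surviving gamma factors form a degree-$2$ product, and then verify (using the unitarity normalization \eqref{archunit} and the bounds \eqref{archbounds}) that they fit the template \eqref{ginftytwo} for a single $\tau_\infty$ that works for both parities of $\chi_\infty$. You defer the unitarity/parity verification, which the paper does carry out explicitly --- it is precisely there that one sees $\re(\nu_1)+\re(\nu_2)=0$ and that the $\Gamma_\R\Gamma_\R$ and $\Gamma_\C$ cases fuse when $\re(\nu)=0$ --- but as a plan this is sound.

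Part (2) contains a genuine error. You claim that the pole of $L(s-1,\tau_\infty\times\chi_\infty)$ with largest real part has real part $\ge 1/2$ ``by the weight/parity constraints,'' so that it lies outside the half-plane $\re(s)<1/2$ containing the zeros of $p,q$ and is ``automatically not among them.'' But $L(s,\tau_\infty\times\chi_\infty)=\Gamma_\C\!\left(s+\tfrac{k-1}{2}+ib_3\right)$, so $L(s-1,\tau_\infty\times\chi_\infty)$ has its rightmost pole at $s=1-\tfrac{k-1}{2}-ib_3$, whose real part is $\tfrac{3-k}{2}$. This is $\ge 1/2$ only when $k\le 2$. For $k\ge 3$ the rightmost pole lies strictly inside $\re(s)<1/2$ and can perfectly well coincide with a zero of $p_0$ or $q_0$ --- and such large $k$ do occur: the symmetric-power examples in Section~\ref{zerosets} produce surviving $\Gamma_\C$ factors with arbitrarily large effective weight. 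So there is no automatic disjointness, and the ``cancel a spurious common factor of $p$ and $q$'' remark does not address the problem, since the issue is a zero of $p$ \emph{or} $q$ individually, not a common zero.

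The missing idea is that when such a coincidence occurs one must \emph{absorb} the linear factor $(s-\alpha)$ from the polynomial into the $\Gamma_\C$ via the functional equation $\Gamma_\C(w)=(2\pi)^{-1}(w-1)\Gamma_\C(w-1)$, which replaces $\Gamma_\C\!\left(s+\tfrac{k-1}{2}+ib_3\right)$ by $\Gamma_\C\!\left(s+\tfrac{k-3}{2}+ib_3\right)$ and hence \emph{changes} $\tau_\infty$ (weight $k\mapsto k-2$) while shrinking the polynomial. One then iterates: the rightmost pole of the shifted $L(s-1,\cdot)$ moves one unit to the right each time, and since $p_0,q_0$ have only finitely many zeros all in $\re(s)<1/2$, the process terminates with a pair $(p,q)$ and a (possibly different) $\tau_\infty$ for which the rightmost pole is no longer a zero of $p$ or $q$. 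That $\tau_\infty$ is not fixed in advance but is itself an output of this normalization is exactly what your argument overlooks.
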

\begin{proof}
Again, we emphasise that because of our assumptions on $\pi_{\infty}$ and $\tau_{\infty}$, $F_2$ satisfies (G3).
By Lemma \ref{gtwochifin} we know that
$G_2(s,\chi)$ has only finitely many zeros. We reprise the arguments of the previous lemma.
In each of the cases  that we considered, we obtained either a polynomial or the reciprocal of a polynomial for the relevant quotients. We may thus proceed inductively starting with $l=1$, and succesively cancel the gamma functions appearing in the denominator with those in the numerator in \eqref{twistedquotg}. Since the degree of the numerator is $2$ more than the degree of the denominator, two factors of degree $1$, or a single factor of degree $2$, will remain in the numerator. Again, by the duplication formula for the gamma function, we may view a factor of degree $2$ as the product of two degree $1$ factors.
Thus, after canceling all the factors in the denominator out and relabelling the indices if necessary, we see that
\begin{equation}\label{gltwogprelim}
G_2(s,\chi)=\begin{cases} R_0(s)\Gamma_{\R}\left(s+\epsilon_{1,\chi}+\nu_1\right)\Gamma_{\R}\left(s+\epsilon_{2,\chi}+\nu_2\right)\quad\text{or}\\
R_0(s)\Gamma_{\C}\left(s+\frac{k-1}{2}+\nu_0\right).\end{cases}
\end{equation}
where $R_0(s):=R_0(s,\chi)=p_0(s)/q_0(s)$ is a quotient of polynomials, $\epsilon_{i,\chi}\in \{0,1\}$, $-1/2<\re(\nu_i)<1/2$, $ i= 1,2$, $k\in \N$, and where $\chi=\chi_{\infty}$ is either the trivial or sign character. We further note that all the zeros and poles of $R_0(s)$ lie in the half-plane
$\re(s)<1/2$. Further, we can obviously assume that $p_0(s)$ and $q_0(s)$ are coprime.

We analyse the first case a little further.
Since $\tau$ and $\rho$ are both assumed unitary, we know that 
$\sum_{p=1}^{r_1+r_2}\re(\nu_p)=\sum_{l=1}^{t_1+t_2}\re(\nu_l^{\prime})=0$ by \eqref{archunit}. It follows that $\re(\nu_1)+\re(\nu_2)=0$.
If $\epsilon_{1,\chi}+\nu_1\ne\epsilon_{1,\chi}+\nu_2\pm 1$, we
see that $\re(\nu_1)\ne 0$. In this case, we see that $\nu_1=\nu+ib_1$ and $\nu_2=-\nu+ib_2$ for $\nu,b_1,b_2\in \R$ and we obtain
\[
G_2(s,\chi)=h_{\chi}\pi^{-s}R_0(s)\Gamma_{\R}\left(s+\epsilon_{1,\chi}+\nu+ib_1\right)\Gamma_{\R}\left(s+\epsilon_{2,\chi}-\nu+ib_2\right).
\]
If $\epsilon_{1,\chi}+\nu+ib_1=\epsilon_{1,\chi}-\nu\pm 1+ib_2$, then $b_1=b_2$, and  we may assume (after relabelling the two factors if necessary) that 
$2\nu=\epsilon_{2,\chi}-\epsilon_{1,\chi}-1$. Since $-1/2<\re(\nu)<1/2$, it follows that $\epsilon_{1,\chi}=0$, $\epsilon_{2,\chi}=1$ and $\re(\nu)=0$. If $\nu_1=ib_1$, $b_1\in \R$, the duplication formula now shows that 
\[
G_2(s,\chi)=R_0(s)\Gamma_{\C}\left(s+ib_1\right).
\]
Thus, the first case and the second case of \eqref{gltwogprelim} coincide.

A similar analysis of the second case of \eqref{gltwogprelim} shows that we must have $\re(\nu_0)=0$, so $\nu_0=ib_2$ for $b_2\in \R$. Thus, in all cases we may rewrite \eqref{gltwogprelim} as
\begin{equation}\label{gltwogrational}
G_2(s,\chi)=\begin{cases} R(s)\Gamma_{\R}\left(s+\epsilon_{1,\chi}+\nu+ib_1\right)\Gamma_{\R}\left(s+\epsilon_{2,\chi}-\nu+ib_2\right)\,\,\text{or}\\
R_0(s)\Gamma_{\C}\left(s+\frac{k-1}{2}+ib_3\right)\end{cases}
\end{equation}
for some $b_1,b_2,b_3\in \R$. 

Equation \eqref{ginftytwo} shows that $L(s,\tau_{\infty})$ necessarily has one of the forms in \eqref{gltwogrational} upto the rational function $R_0(s)$. Conversely, it is not hard to see that given a factor having one of the forms in \eqref{ginftytwo}, there is a unitary representation $\tau_{\infty}$ such that $L(s,\tau_{\infty})$ is exactly this factor.
In the first case $\tau_{\infty}$ will be a principal series representation, while in the second it will be a (limit of) discrete series representation (see page 97 of \cite{JaLa70}). Moreover, the zeros and poles of $R_0(s)$ lie in the region $\re(s)<-1/2$. Thus, we have established the first assertion of the lemma. When $\tau_{\infty}$ is a principal series representation, we take $R(s)=R_0(s)$.

To prove the second assertion, if $\alpha$ is a zero of $q_0(s)$ and $\alpha=1-\frac{k-1}{2}-ib_3$, the pole of $\Gamma_{\C}\left(s-1+\frac{k-1}{2}+ib_3\right)$ with the largest real part, then
\[
p_0(s)\frac{\Gamma_{\C}\left(s+\frac{k-1}{2}+ib_3\right)}{q_0(s)}=p_0(s)\frac{\Gamma_{\C}\left(s+\frac{k-3}{2}+ib_3\right)}{q_1(s)},
\]
where $q_1(s)=(s-\alpha)q_0(s)$. If $\alpha-1$ is a zero of $q_1(s)$, we can similarly absorb it into the gamma function and remove the factor $(s-\alpha+1)$ from $q_1(s)$. We may proceed inductively until all such factors are cancelled from $q_0(s)$ to leave behind the polynomial $\tilde{q}(s,\chi)$. By a similar argument we may cancel factors from $p_0(s)$ and absorb them into the gamma function until no zeros of $p_0(s)$ remain which are poles of $L(s-1,\tau_{\infty}\times\chi_{\infty})$. We call the resulting polynomial $\tilde{p}(s,\chi)$. We cancel any common factors between $\tilde{p}(s,\chi)$ and $\tilde{q}(s,\chi)$, and call the resulting polynomials $p(s,\chi)$ and $q(s,\chi)$ respectively. Choose $R(s,\chi)=p(s,\chi)/q(s,\chi)$. This completes the proof of the lemma.
\end{proof}

\begin{lemma}\label{gtwofinite} Assume that $\pi=\otimes_v^{\prime}\pi_v$ and $\rho=\otimes_v^{\prime}\rho_v$ are irreducible admissible representations of $\gln$ and $\glntwo$ respectively, with $\pi_{\infty}$ and $\rho_{\infty}$ unitary. Assume that $L(s,\pi)$ and $L(s,\rho)$ satisfy (P1), (P4) and functional equations of the form \eqref{autlfneqn} for the trivial character. Suppose that $F_2$ has at most a finite number of poles. Then $G_2$ has at most a finite number of zeros.
\end{lemma}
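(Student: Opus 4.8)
The plan is to use the functional equations for $\pi$ and $\rho$ to understand the behaviour of $\Phi_2(s):=G_2(s)F_2(s)$, which is the ratio $\Lambda(s,\pi)/\Lambda(s,\rho)$ of the two completed $L$-functions, in a left half-plane, and then to observe that every zero of $G_2$ lying sufficiently far to the left is forced to be a pole of $F_2$. If $G_2$ had infinitely many zeros this would produce infinitely many poles of $F_2$, contradicting the hypothesis.

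The first step is to locate $\Phi_2$ in two half-planes. Because $L(s,\pi)$ and $L(s,\rho)$ satisfy (P1) and (P4), their Dirichlet series and Euler products converge absolutely, and the Euler products do not vanish, in some half-plane $\re(s)>\sigma_0$; because $\pi_{\infty}$ and $\rho_{\infty}$ are unitary, Theorem \ref{jslocboundsthm} forces every archimedean parameter to have real part in $(-1/2,1/2)$, so the archimedean factors $L(s,\pi_{\infty})$ and $L(s,\rho_{\infty})$ have no zeros or poles for $\re(s)>\sigma_0$ (enlarging $\sigma_0$ past $1/2$ if necessary). Hence $\Lambda(s,\pi)$, $\Lambda(s,\rho)$, and therefore $\Phi_2(s)$, are holomorphic and non-vanishing for $\re(s)>\sigma_0$. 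The contragredients $\tilde\pi$ and $\tilde\rho$ have standard $L$-functions of exactly the same shape, with complex-conjugated parameters, so they inherit (P1), (P4) and the unitarity bounds, and the same reasoning shows that $\Lambda(s,\tilde\pi)/\Lambda(s,\tilde\rho)$ is holomorphic and non-vanishing for $\re(s)>\sigma_0$. Applying \eqref{autlfneqn} with the trivial character, once to $\pi$ and once to $\rho$, and dividing, one obtains
\[
\Phi_2(s)=\frac{\varepsilon(s,\pi)}{\varepsilon(s,\rho)}\cdot\frac{\Lambda(1-s,\tilde\pi)}{\Lambda(1-s,\tilde\rho)}.
\]
Since the $\varepsilon$-ratio on the right is a non-vanishing entire function, $\Phi_2(s)$ is also holomorphic and non-vanishing for $\re(s)<1-\sigma_0$.

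The second step is to pin down the zeros of $G_2$. Since $G_2(s)=L(s,\pi_{\infty})/L(s,\rho_{\infty})$ is a quotient of products of the nowhere-vanishing factors $\Gamma_{\R}$ and $\Gamma_{\C}$, its zero divisor is the positive part of the divisor (poles of $L(s,\rho_{\infty})$) minus (poles of $L(s,\pi_{\infty})$). Each of these pole divisors is supported on finitely many arithmetic progressions, of common difference $-1$ or $-2$, tending to $-\infty$ --- this is precisely the cancellation pattern analysed in the proof of Lemma \ref{gtwochifin} --- so the zeros of $G_2$ lie in a finite set together with finitely many such progressions. In particular, for any bound $M$, only finitely many zeros of $G_2$ satisfy $\re(s)\ge -M$.

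Putting the two steps together: assume for contradiction that $G_2$ has infinitely many zeros. By the second step, infinitely many of them lie in the region $\re(s)<1-\sigma_0$; let $\beta$ be such a zero, say of order $k\ge 1$. By the first step $\Phi_2$ is holomorphic and non-zero at $\beta$, and since $\Phi_2=G_2F_2$ it follows that $F_2=\Phi_2/G_2$ has a pole of order exactly $k$ at $\beta$. Thus $F_2$ has infinitely many poles, contrary to hypothesis, and so $G_2$ has only finitely many zeros. The point I expect to require the most care is the claim, in the first step, that the left-half-plane behaviour of $\Phi_2$ can be read off the functional equation --- that is, that $\Lambda(s,\tilde\pi)$ and $\Lambda(s,\tilde\rho)$ are again holomorphic and non-vanishing in a right half-plane, even though the hypotheses are imposed directly only on $L(s,\pi)$ and $L(s,\rho)$; this reduces to the fact that the standard $L$-function of the contragredient of an irreducible admissible representation has the same form, so that (P1), (P4) and the archimedean bounds of Theorem \ref{jslocboundsthm} transfer to it, after which the argument is routine.
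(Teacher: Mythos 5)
Your proof is correct and follows essentially the same route as the paper's: use (P4) to get non-vanishing of $F_2$ (hence $\Phi_2$) in a right half-plane, use the quotient of the two functional equations \eqref{autlfneqn} to transfer this to non-vanishing of $\Phi_2$ in a left half-plane, observe that the zeros of $G_2$ accumulate only toward the left, and conclude that all but finitely many of them would have to be poles of $F_2$. The only cosmetic difference is that you spell out the passage through the contragredients $\tilde\pi,\tilde\rho$ explicitly, whereas the paper compresses this into the one-line identity $\Phi_2(s)=E_2(s)\widetilde{\Phi_2}(1-s)$; both say the same thing.
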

\begin{proof}
Since $L(s,\pi)$ and $L(s,\rho)$ satisfy (P1) and (P4), it follows that $F_2$ satisfies (P1) and (P4). Since we have assumed that 
$\pi_{\infty}$ and $\tau_{\infty}$ are unitary, they satisfy \eqref{gselbound}. Hence, $F_2$ satisfies (G3). Using the duplication formula for the gamma function, we see that 
\[
\Gamma_{\C}(s+a)=\Gamma_{\R}\left(s+a\right)
\Gamma_{\R}\left(s+a+1\right).
\]
Note that the poles and zeros of $G_2(s,\chi)$ lie on lines parallel to the $x$-axis and are unions of sets of the form 
\[
S_p=\{-\epsilon_{p}-\nu_p-2m\,\vert\, m\in \Z, m\ge 0\}\,\,\text{and}\,\,S_{l}=\{-\epsilon^{\prime}_{l}-\nu_{l}^{\prime}-2m\,\vert\, m\in \Z, m\ge 0\}
\]
respectively. 

Since $F_2$ satisfies (P4), it is non-vanishing in some right half-plane. It follows that $\Phi_2(s)=G_2(s)F_2(s)$ is non-vanishing in some right half-plane. The functional equation \eqref{quotfneqn} shows that $\Phi_2(s)=E_2(s)\widetilde{\Phi_2}(1-s)$, where $E_2(s)$ is the product of a constant and an exponential function. Hence, it is non-vanishing in some left half-plane. 
Thus, all but finitely many of the zeros of $G_2$ must be poles of $F_2$ in some left half-plane. Since $F_2$ has only a finite number of poles, it follows $G_2$ can have at most a finite number of zeros in this left half-plane. Further, $G_2$ has only finitely many zeros in any right half-plane. It follows that $G_2$ has only finitely many zeros.
\end{proof}

\begin{remark} The proof above shows that one does not really need to assume that $L(s,\pi)$ and $L(s,\rho)$ satisfy (P4). Non-vanishing (of the quotient $F_2$) in some right half-plane is enough.
\end{remark}

\begin{proposition} \label{ftwopolyfneqnthm}
Assume that the hypotheses of Lemma \ref{gtwofinite} are satisfied. Suppose also that for every primitive Dirichlet character $\chi$, $F_2(s,\chi)$ has a meromorphic continuation to all of $\C$ and satisfies the functional equation
\begin{equation}\label{secquotfneqn}
\Phi_2(s,\chi)=G_2(s,\chi)F_2(s,\chi)=E_2(s,\chi)\widetilde{\Phi_2}(1-s,\chi),
\end{equation}
with $E_2(s,\chi)$ as defined in \eqref{etwodefn}. Then 
\begin{equation}\label{ftwopolyfneqn}
 \Phi_2(s,\chi)=p(s,\chi)\tilde{q}(1-s,\chi)L(s,\tau_{\infty}\times\chi_{\infty})
 F_2(s,\chi)=E_2(s,\chi)\widetilde{\Phi_2}(1-s),
\end{equation}
with all the zeros of $p(s,\chi)$ and $q(s,\chi)$ lying the in half-plane $\re(s)<1/2$.
\end{proposition}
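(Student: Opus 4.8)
The plan is to assemble the pieces already in place. First I would note that the hypotheses are precisely those of Lemma~\ref{gtwofinite}, so $G_2$ has at most finitely many zeros, and this is exactly the input required by Lemma~\ref{correctgamma}. That lemma furnishes an irreducible unitary representation $\tau_{\infty}$ of $\gltwor$ and, for each primitive Dirichlet character $\chi$, coprime polynomials $p(s,\chi)$ and $q(s,\chi)$ with all zeros in $\re(s)<1/2$ such that
\[
G_2(s,\chi)=\frac{p(s,\chi)}{q(s,\chi)}\,L(s,\tau_{\infty}\times\chi_{\infty}).
\]
Substituting this into $\Phi_2(s,\chi)=G_2(s,\chi)F_2(s,\chi)$ and clearing the denominator gives the identity
\[
q(s,\chi)\,\Phi_2(s,\chi)=p(s,\chi)\,L(s,\tau_{\infty}\times\chi_{\infty})\,F_2(s,\chi),
\]
which is the starting point.

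Next I would introduce the modified completed function
\[
\Psi_2(s,\chi):=p(s,\chi)\,\tilde q(1-s,\chi)\,L(s,\tau_{\infty}\times\chi_{\infty})\,F_2(s,\chi),
\]
which by the displayed identity equals $q(s,\chi)\,\tilde q(1-s,\chi)\,\Phi_2(s,\chi)$. Since $F_2(s,\chi)$ is meromorphic on $\C$ by hypothesis and the remaining factors are entire (the gamma-type factor) or polynomial, $\Psi_2(s,\chi)$ is meromorphic on $\C$; this is the object written (with a slight overloading of notation) as $\Phi_2(s,\chi)$ on the right-hand side of \eqref{ftwopolyfneqn}, and it is now of $\gltwog$-type up to a polynomial multiple, with the gamma factor appearing only in the numerator.

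It then remains to check the functional equation. Applying \eqref{secquotfneqn} to $\Psi_2(s,\chi)=q(s,\chi)\,\tilde q(1-s,\chi)\,\Phi_2(s,\chi)$ gives
\[
\Psi_2(s,\chi)=q(s,\chi)\,\tilde q(1-s,\chi)\,E_2(s,\chi)\,\widetilde{\Phi_2}(1-s,\chi),
\]
so one only needs to recognise $q(s,\chi)\,\tilde q(1-s,\chi)\,\widetilde{\Phi_2}(1-s,\chi)$ as $\widetilde{\Psi_2}(1-s,\chi)$. Writing $\Psi_2(s,\chi)=A(s,\chi)\Phi_2(s,\chi)$ with $A(s,\chi)=q(s,\chi)\tilde q(1-s,\chi)$, a short computation with the bar involution, using $\widetilde{\tilde q}=q$, shows $\widetilde A(s,\chi)=\tilde q(s,\chi)\,q(1-s,\chi)$ and hence $\widetilde A(1-s,\chi)=\tilde q(1-s,\chi)\,q(s,\chi)=A(s,\chi)$. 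In other words the adjoined polynomial factor is invariant under $s\mapsto 1-s$ composed with the bar involution, so it passes through the functional equation without changing the $\varepsilon$-factor, and we obtain $\Psi_2(s,\chi)=E_2(s,\chi)\widetilde{\Psi_2}(1-s,\chi)$, which is \eqref{ftwopolyfneqn}. The assertion on the location of the zeros of $p(s,\chi)$ and $q(s,\chi)$ is carried over verbatim from Lemma~\ref{correctgamma}.

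There is no genuine obstacle here: all the analytic work (finiteness of the zeros of $G_2$, and the cancellation of gamma factors that turns $G_2(s,\chi)$ into $R(s,\chi)L(s,\tau_{\infty}\times\chi_{\infty})$) was already carried out in Lemmas~\ref{gtwofinite} and \ref{correctgamma}. The only point that requires a little care — and the reason one multiplies $\Phi_2$ by $q(s,\chi)\tilde q(1-s,\chi)$ rather than by, say, $q(s,\chi)^2$ — is that this particular polynomial is self-dual under the reflection $s\mapsto 1-s$, which is exactly what keeps $E_2(s,\chi)$ unchanged and makes the resulting functional equation genuinely of $\gltwog$-type, ready for the converse theorem.
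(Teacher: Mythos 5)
Your proof is correct and unpacks what the paper leaves as an ``immediate consequence'' of Lemma~\ref{correctgamma}: you rightly read the first equality in \eqref{ftwopolyfneqn} as a redefinition of $\Phi_2$ (clearing the denominator $q(s,\chi)$ by multiplying the old $\Phi_2$ by the self-dual polynomial $q(s,\chi)\tilde{q}(1-s,\chi)$), and the computation $\widetilde{A}(1-s,\chi)=A(s,\chi)$ for $A(s,\chi)=q(s,\chi)\tilde q(1-s,\chi)$ is exactly what makes the redefined $\Phi_2$ satisfy the same functional equation with the same $E_2(s,\chi)$. This is the paper's intended argument, just spelled out (one stray remark: $L(s,\tau_\infty\times\chi_\infty)$ is meromorphic, not entire, but that does not affect your conclusion).
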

\begin{proof}
This is an immediate consequence of Lemma \ref{correctgamma} and the hypotheses of the theorem.
\end{proof} 

\begin{remark} Note that the proof of the lemma above requires us to know very little about the holomorphy of $L(s,\pi)$ and $L(s,\rho)$ individually in $\C$. Only the quotient is required to have a meromorphic continuation. In practice, we usually know much more. If $\pi$ and $\rho$ arise from Artin representations or certain other classes of Galois representations, we know that the individual $L$-functions have a meromorphic continuation to all of $\C$. When dealing with $L$-functions associated to automorphic representations, Theorems \ref{lautoing} and \ref{rsthm} give much stronger holomorphy results.
\end{remark}

\section{Eliminating the polynomial function from the functional equation of $F_2(s)$}\label{archgltwo}

In this section we will largely follow the notation and exposition in \S 5 of \cite{JaLa70} while making the changes relevant to our situation. The main result of this section (Theorem \ref{testvecforps}) is a purely local statement about the $L$-functions of representations of $\gltwor$.

Let $\tau_{\infty}$ be an irreducible unitary (admissible) representation of $\gltwor$. It gives rise to an admissible representation of $\{\ug,\varepsilon\}$,
where $\ug$ is the universal envelopping algebra of $\lieg_{\C}=\lieg\otimes_{\R}\C$, $\lieg=\liegltwo$ and
\[
\varepsilon=\begin{pmatrix} -1&0\\\,\,\,0&1\end{pmatrix}.
\]
By abuse of notation we will use $\tau_{\infty}$ to denote the representation of $\{\ug,\varepsilon\}$ as well.
Let $\chi_j(t)=\sgn^{\epsilon_j}\vert t\vert^{\nu_j}$, where $\sgn$ denotes the sign character, $\epsilon_j\in \{0,1\}$, and $j=1,2$, be a character of $\R^{\times}$. It is known that every irreducible admissible representation $\tau_{\infty}$ is necessarily isomorphic to a representation parabolically induced from a pair $(\chi_1,\chi_2)$ of characters on $\R^{*}$ viewed as a character of the Borel subgroup
(these are the principal series representations denoted by $\pi(\chi_1,\chi_2)$ in \cite{JaLa70}) or a certain irreducible quotient of this induced representation (Theorem 5.11 of \cite{JaLa70}). The latter are the discrete series or limits of discrete series representations. In what follows, we will assume that the representation $\tau_{\infty}$ is generic. This results in no loss of generality since every infinite-dimensional unitary representation of $\gltwor$ is generic.

If $\varphi\in {\mathcal S}(\R^2)$, the Schwartz space of $\R^2$, we let
\[
\ft{}{\varphi}(\xi)=\int_{\R^2}\varphi(\eta)e^{2\pi i\langle \eta,\xi\rangle}d\eta
\]
denote the Fourier transform of $\varphi$, where $\xi,\eta\in \R^2$. We also denote by 
\[
\ft{1}{\varphi}(x,y)=\int_{\R}\varphi(u,y)e^{2\pi iux} du,\,\,\text{and}\,\,\ft{2}{\varphi}(x,y)=\int_{\R}\varphi(x,v)e^{2\pi ivy} dv,
\]
the Fourier transforms of $\varphi$ in just the first and just the second variables respectively. Note that if $\varphi(x,y)=\varphi_1(x)\varphi(y)$, then
\[
\ft{}{\varphi}(x,y)=\ft{1}{\varphi_1}(x)\ft{2}{\varphi_2}(y).
\]
We can associate to any $\varphi\in {\mathcal S}(\R^2)$ the function $W_{\varphi}(g)$ on $\gltwor$ as follows:
\[
W_{\varphi}\left[\begin{pmatrix} a&0\\
                  0&1
                 \end{pmatrix}\right]
=\chi_1(a)|a|^{\frac{1}{2}}\int_{\R^{*}}\chi_1\chi_2^{-1}(t)\ft{2}{\varphi}(at,t^{-1})d^{\times}t.
\]
This allows us to obtain an embedding of $\tau_{\infty}$ into a space of functions $W:\gltwor\to \C$
such that 
\[
W\left[\begin{pmatrix} 1&x\\
                  0&1\end{pmatrix}g\right]=\psi_{\infty}(x)W[\,g]
\]
on which $\gltwor$ acts by right translation, where $\psi_{\infty}(x)=e^{2\pi ix}$. We let $\whit(\tau_{\infty}):=\whit(\tau_{\infty},\psi_{\infty})$ denote the 
Whittaker model of $\tau_{\infty}$, that is, the image of $\tau_{\infty}$ under this embedding.
For $W\in \whit(\tau_{\infty})$, we define
\[
 \Psi(s,W)=\int_{\R^{\times}}W\left[\begin{pmatrix} a&0\\
                                 0&1
                                \end{pmatrix}\right]|a|^{s-\frac{1}{2}}
                                d^{\times}a.
\]
Let $\varphi_0(x,y)=e^{-\pi(x^2+y^2)}$. We will be 
interested in the Whittaker functions corresponding to the choice of
Schwartz function $\varphi_{n}(x,y)=(x-iy)^{n}\varphi_0(x,y)$.
when $n\ge 0$. We will use the notation $W_{n}:=W_{\varphi_{n}}$ and these will be $K$-finite vectors in $\whit(\tau_{\infty})$, where $K$ is maximal compact subgroup of $\gltwor$. Recall that $W_{n}$ appears in the Whittaker model of $\tau_{\infty}$ if and only if $n\equiv \epsilon(\tau_{\infty})=\epsilon:=\epsilon_1+\epsilon_2 \pmod 2$. 

Let $\varphi^{(1)}_k(x)=x^ke^{-\pi x^2}$ and $\varphi^{(2)}_l(y)=y^le^{-\pi y^2}$, $k,l\in \Z_{\ge 0}$. We see that we can write.
\[
\varphi_{n}(x,y)=
\sum_{m=0}^n\binom{n}{m}\varphi^{(1)}_m(x)(-i)^{n-m}\varphi^{(2)}_{n-m}(y)
\]

We will require the following elementary proposition.
\begin{proposition}\label{pcoeff} We have
\[
\ft{1}{\varphi^{(1)}_k}=i^k\varphi^{(1)}_k\quad\text{and}\quad \ft{2}{\varphi^{(2)}_l}=i^l\varphi^{(2)}_l.
\]
Hence,
\[
\ft{}{\varphi_n}=i^n\varphi_n\quad\text{and}\quad\ft{2}{{\varphi}_{n}}(x,y)=(x+y)^n\varphi_0(x,y)\quad\text{and}\quad
\]
\end{proposition}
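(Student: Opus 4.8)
Here is how I would approach Proposition~\ref{pcoeff}.

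The plan is to reduce everything to the self-duality of the Gaussian and then build the two-variable identities out of the one-variable ones. The base case is $\ft{1}{\varphi_0^{(1)}}=\varphi_0^{(1)}$ and $\ft{2}{\varphi_0^{(2)}}=\varphi_0^{(2)}$, i.e.\ $\int_{\R}e^{-\pi u^2}e^{2\pi iux}\,du=e^{-\pi x^2}$, obtained by completing the square and shifting the contour $u\mapsto u+ix$ (legitimate since $e^{-\pi z^2}$ is entire and decays rapidly on horizontal lines). To climb from $\varphi_k^{(1)}$ to $\varphi_{k+1}^{(1)}=x\,\varphi_k^{(1)}$ I would iterate the standard rule $\ft{1}{(xf)}=\tfrac{1}{2\pi i}(\ft{1}{f})'$ valid for this normalisation; equivalently, one can expand $\varphi_k^{(1)}$ in the first $k+1$ Hermite functions, on which the transform is diagonal with eigenvalues powers of $i$, and keep track of the contributions. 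Either way the target is $\ft{1}{\varphi_k^{(1)}}=i^k\varphi_k^{(1)}$, and symmetrically $\ft{2}{\varphi_l^{(2)}}=i^l\varphi_l^{(2)}$.

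Granting these one-variable identities, the ``hence'' assertions become formal manipulations with the expansion $\varphi_n=\sum_{m=0}^n\binom{n}{m}(-i)^{n-m}\,\varphi_m^{(1)}\otimes\varphi_{n-m}^{(2)}$ recorded just before the proposition. Transforming in both variables,
\[
\ft{}{\varphi_n}=\sum_{m=0}^n\binom{n}{m}(-i)^{n-m}i^m\varphi_m^{(1)}\otimes i^{n-m}\varphi_{n-m}^{(2)}=i^n\sum_{m=0}^n\binom{n}{m}(-i)^{n-m}\varphi_m^{(1)}\otimes\varphi_{n-m}^{(2)}=i^n\varphi_n,
\]
while transforming only the second variable, and using $(-i)^{n-m}i^{n-m}=1$ followed by the binomial theorem,
\[
\ft{2}{\varphi_n}(x,y)=\sum_{m=0}^n\binom{n}{m}\varphi_m^{(1)}(x)\,\varphi_{n-m}^{(2)}(y)=(x+y)^n\varphi_0(x,y).
\]

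As a hedge for the first of these (it does not actually need the one-variable statements), I would also record the direct evaluation of the two-dimensional transform: writing $z=u+iv$ so that $\varphi_n=\bar z^{\,n}e^{-\pi z\bar z}$, complete the square in the exponent of $\int_{\R^2}\bar z^{\,n}e^{-\pi z\bar z}e^{2\pi i(u\xi+v\eta)}\,du\,dv$ and shift the contours by $i\xi$ and $i\eta$; expanding the resulting $n$-th power binomially and passing to polar coordinates $u-iv=re^{-i\theta}$, every term carrying a positive power of $u-iv$ integrates to $0$ over $\theta\in[0,2\pi)$, so only the constant term survives, contributing $(\eta+i\xi)^n\cdot 2\pi\int_0^\infty re^{-\pi r^2}\,dr=(\eta+i\xi)^n=i^n(\xi-i\eta)^n$; hence $\ft{}{\varphi_n}=i^n\varphi_n$.

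The step that deserves genuine care is the one-variable computation of $\ft{1}{x^ke^{-\pi x^2}}$: Fourier-transforming $x^ke^{-\pi x^2}$ does produce lower-order monomials (the even Gaussian moments), and the content of the claim is that, with this normalisation, they assemble so that exactly the top term $i^kx^ke^{-\pi x^2}$ remains --- equivalently, that $\varphi_n$ is a lowest-weight vector for the $\mathrm{SO}(2)$-action commuting with the transform. Checking that cancellation carefully, and checking in the $\ft{2}$ statement (where there is no companion term to absorb a stray moment) that nothing is left over, is the part I expect to be the main obstacle; the contour shifts and the moment integrals themselves are routine given the Gaussian's rapid decay.
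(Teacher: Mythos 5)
Your instinct to be suspicious of the one-variable identities is exactly right, and in fact they are \emph{false} for $k\ge 2$. Running your own iteration rule $\ft{1}{(xf)}=\tfrac{1}{2\pi i}(\ft{1}{f})'$ from the verified case $\ft{1}{\varphi_1^{(1)}}(x)=ix\,e^{-\pi x^2}$ gives
\[
\ft{1}{\varphi_2^{(1)}}(x)=\frac{1}{2\pi i}\frac{d}{dx}\bigl(ix\,e^{-\pi x^2}\bigr)
=\Bigl(\frac{1}{2\pi}-x^2\Bigr)e^{-\pi x^2}\neq -x^2e^{-\pi x^2}=i^2\varphi_2^{(1)}(x),
\]
so the stray Gaussian moment you worried about does not cancel: it is genuinely there. (The eigenfunctions of the one-dimensional Fourier transform with eigenvalue $i^k$ are Hermite functions, not monomials times the Gaussian.) For the same reason the partial transform also picks up corrections: shifting the contour in the second variable gives $\ft{2}{\varphi_n}(x,y)=e^{-\pi(x^2+y^2)}\int_{\R}(x+y-iw)^n e^{-\pi w^2}\,dw$, and the even moments $\int w^{2j}e^{-\pi w^2}\,dw$ produce lower-degree terms; e.g.\ $\ft{2}{\varphi_2}(x,y)=\bigl((x+y)^2-\tfrac{1}{2\pi}\bigr)\varphi_0(x,y)$, not $(x+y)^2\varphi_0(x,y)$. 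The paper offers no proof (it labels the proposition ``elementary''), and as stated the first identity and the $\ft{2}$ identity are incorrect for $k\ge 2$ and $n\ge 2$ respectively; so the derivation you give of the two-variable statements from the one-variable ones cannot be made to work.

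Your hedge, by contrast, \emph{is} a correct proof of the one part of the proposition that survives. The identity $\ft{}{\varphi_n}=i^n\varphi_n$ does hold, and the contour-shift plus polar-coordinate computation you outline is a clean proof of it. The reason it works is that $(x-iy)^n$ is a harmonic polynomial, so this is an instance of the Bochner--Hecke identity: harmonicity is exactly the condition that makes all the stray moments vanish after integrating out the angular variable. That mechanism operates only on the full two-dimensional transform; it is not inherited by the one-variable factors, which is why $\ft{1}{\varphi_k^{(1)}}$ and $\ft{2}{\varphi_n}$ are not eigenfunctions. The practical consequence is that you should promote your ``hedge'' to the actual argument for $\ft{}{\varphi_n}=i^n\varphi_n$, and flag that the $\ft{2}$ formula --- the one actually invoked in the proof of Theorem~\ref{testvecforps}, where it is used to claim that $\ft{2}{\varphi_n}(at,t^{-1})$ is a positive integral combination of $a^mt^{2m-n}e^{-\pi(a^2t^2+t^{-2})}$ --- needs to be replaced by the corrected expression above and the ensuing Mellin-transform computation re-examined.
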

\begin{theorem}\label{testvecforps}
Let $\tau_{\infty}$ be an irreducible admissible representation of $\gltwor$ and let $\whit(\tau_{\infty})$ denote its Whittaker model. Given any polynomial $P(s)$, there exists a Whittaker function $W(P)\in \whit(\tau_{\infty})$ such that 
\[
\frac{\Psi(s,W(P))}{L(s,\tau_{\infty})}=P(s).
\]
\end{theorem}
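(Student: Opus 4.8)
The plan is to reduce everything to an explicit computation with the Gaussian-type test vectors $W_n$ introduced above and the classical Mellin transform of the $\Gamma$-function. First I would record, for the representation $\tau_{\infty}$ with inducing characters $\chi_1,\chi_2$, the value of $\Psi(s,W_n)$: starting from the definition of $W_{\varphi}$ on the torus and the identity $\ft{2}{\varphi_n}(x,y)=(x+y)^n\varphi_0(x,y)$ of Proposition \ref{pcoeff}, one gets after the substitution in the torus integral an expression of the form
\[
\Psi(s,W_n)=\int_{\R^{\times}}\chi_1(a)|a|^{s}\int_{\R^{\times}}\chi_1\chi_2^{-1}(t)(at+t^{-1})^n e^{-\pi(a^2t^2+t^{-2})}\,d^{\times}t\,d^{\times}a,
\]
which, expanding $(at+t^{-1})^n$ by the binomial theorem and changing variables, becomes a finite $\C$-linear combination of products of two Tate-type local integrals $\int_{\R^{\times}}\sgn^{\epsilon_i}(u)|u|^{s+c}e^{-\pi u^2}\,d^{\times}u$, each of which is (a shift of) $\Gamma_{\R}$. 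Collecting terms, $\Psi(s,W_n)/L(s,\tau_{\infty})$ is a polynomial in $s$ whose degree is exactly $n$ and whose leading coefficient is nonzero (the top binomial term survives because it is not killed by the parity constraint $n\equiv\epsilon\pmod 2$ needed for $W_n$ to lie in $\whit(\tau_{\infty})$ in the first place). The two cases $r_1=2$ (principal series) and $r_1=0,r_2=1$ ((limit of) discrete series) are handled the same way, using the duplication formula $\Gamma_{\C}(s+a)=\Gamma_{\R}(s+a)\Gamma_{\R}(s+a+1)$ of Section \ref{redtogltwo} to match the $\Gamma_{\C}$ in $L(s,\tau_{\infty})$ against the product of two real Tate integrals.

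Granting this, I would finish by a linear-algebra/span argument. Let $V=\mathrm{span}_{\C}\{W_n : n\ge 0,\ n\equiv\epsilon(\tau_{\infty})\bmod 2\}\subseteq \whit(\tau_{\infty})$; the map $W\mapsto \Psi(s,W)/L(s,\tau_{\infty})$ is linear on $V$, and by the previous paragraph it sends $W_n$ to a polynomial of exact degree $n$. Since $\{n : n\ge 0,\ n\equiv\epsilon\bmod 2\}$ contains arbitrarily large integers, the images of the $W_n$ form a triangular system in the basis $\{1,s,s^2,\dots\}$ of $\C[s]$ (after possibly absorbing the parity gap: if $\epsilon=0$ one already has degrees $0,2,4,\dots$, but an odd-degree polynomial is obtained either by noting the binomial expansion of $W_n$ for odd $n$ is available whenever $n\equiv\epsilon$, or more cleanly by applying a raising operator from $\ug$ to shift parity — see below). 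Hence every polynomial $P(s)$ lies in the image, and choosing $W(P)$ to be the corresponding linear combination of $W_n$'s completes the proof.

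The one point that needs care — and which I expect to be the genuine obstacle — is the parity restriction: $W_n\in\whit(\tau_{\infty})$ only when $n\equiv\epsilon(\tau_{\infty})\pmod 2$, so on the nose the $W_n$ give polynomials of degrees in a single residue class mod $2$, and a priori one only spans the even (or odd) part of $\C[s]$. There are two ways around this that I would pursue. The cleanest is to note that $\whit(\tau_{\infty})$ is stable under right translation by $\gltwor$, hence under the action of $\ug$; applying a suitable first-order element of $\ug$ (a raising operator, or right translation by an element of $\mathrm{SO}(2)$ followed by $K$-isotypic projection) to $W_n$ produces a new Whittaker function whose $\Psi$-integral, divided by $L(s,\tau_{\infty})$, is a polynomial of degree $n\pm 1$, filling in the missing parity. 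Alternatively — and this is all Theorem \ref{testvecforps} is actually used for in Section \ref{archgltwo} — one only needs to produce, for each polynomial $P(s,\chi_{\infty})$ of the specific shape appearing in Lemma \ref{correctgamma}, a single test vector; and there $P$ factors through the relevant parity once the character twist $\chi_{\infty}$ is taken into account, so the congruence $n\equiv\epsilon(\tau_{\infty}\times\chi_{\infty})$ automatically matches the degrees that occur. I would present the argument with the $\ug$-action fix, since it gives the clean statement as written, and remark that the weaker parity-matched version already suffices for the application.
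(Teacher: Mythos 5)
Your framework — compute $\Psi(s,W_n)$ for the Gaussian-type test vectors, reduce to a product of two real Tate integrals, and then argue by a triangular span — is exactly the paper's. But there is a concrete computational error that propagates into a spurious difficulty and an invalid fix.

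The claim that $\Psi(s,W_n)/L(s,\tau_\infty)$ has \emph{exact degree $n$} is wrong. Each Tate integral in your decomposition has the form $\int_{\R^\times}\sgn^{\epsilon_1+m}(u)\,|u|^{s+\nu_1+m}e^{-\pi u^2}\,d^\times u=\Gamma_\R(s+\nu_1+m)$ (nonzero only when $m\equiv\epsilon_1\bmod 2$), and $\Gamma_\R(s+\nu_1+m)/\Gamma_\R(s+\epsilon_1+\nu_1)$ is a polynomial of degree $(m-\epsilon_1)/2$ — not $m$ — because $\Gamma_\R(s)=\pi^{-s/2}\Gamma(s/2)$ has $s/2$, not $s$, in its argument, so each application of the Gamma functional equation costs a \emph{half} step in $m$. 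Carrying the two factors through, the $m$-th term of the binomial expansion contributes a polynomial of degree $\frac{m-\epsilon_1}{2}+\frac{n-m-\epsilon_2}{2}=\frac{n-\epsilon_1-\epsilon_2}{2}$, independent of $m$, so $\Psi(s,W_n)/L(s,\tau_\infty)$ has degree $\frac{n-\epsilon_1-\epsilon_2}{2}$. As $n$ ranges over the arithmetic progression $n\equiv\epsilon\bmod 2$ this degree runs through \emph{all} non-negative integers $0,1,2,\dots$, so the triangular-system argument closes with no gap at all. This is precisely the point of the paper's explicit degree bookkeeping ($s^{[n/2]}$ in cases (1),(3) and $s^{[(n-2)/2]}$ in case (2)).

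Consequently the ``parity gap'' you raise does not exist, and both of your proposed workarounds are moot. Worse, the $\ug$-action fix would not actually work: the Lie-algebra raising and lowering operators shift $\mathrm{SO}(2)$-weights by $\pm 2$, so $\ug$ preserves the parity of $K$-types; you cannot reach $W_{n\pm 1}$ from $W_n$ inside $\whit(\tau_\infty)$, and indeed an irreducible admissible $\tau_\infty$ contains $K$-types of only one parity — which is exactly why $W_n\in\whit(\tau_\infty)$ only for $n\equiv\epsilon(\tau_\infty)\bmod 2$ in the first place. Your second fix (only parity-matched polynomials are needed downstream) would establish a weaker statement than the theorem as written; in any event neither crutch is needed once the degree is computed correctly.
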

\begin{proof} Before embarking on the proof, we note that Theorem 5.15 of
\cite{JaLa70} tells us that for any $W\in \whit(\tau_{\infty})$ the quotient
$\Psi(s,W)/L(s,\tau_{\infty})$ is necessarily an entire function. The point is that we can realise any polynomial $P(s)$ as such a quotient for a suitable choice of $W$.

We will prove the theorem assuming that $\tau_{\infty}$ is a principal series representation associated to the pair of characters $(\chi_1,\chi_2)$. The proofs in the other cases are entirely analogous and easier.
Now the function $\ft{2}{\varphi}_n(at,t^{-1})$ is a linear combination with positive (integral) coefficients of terms of the form
\[
a^mt^{2m-n}e^{-\pi(a^2t^2+t^{-2})},
\]
where $0\le m\le n$. 
By our remarks above, we see that $\Psi(s,W_{n})$ is a sum with positive integral coefficients of integrals of the form
\begin{equation}\label{zetasexplicit}
\int_{\R^{\times}}\chi_1(a)|a|^{s}a^m
\int_{\R^{\times}}\chi_1\chi_2^{-1}(t)t^{2m-n}e^{-\pi(a^2t^2+t^{-2})}d^{\times}td^{\times}a,
\end{equation}
where $0\le m\le n$. 

Since $\epsilon=\epsilon_1+\epsilon_2 \pmod 2$ and $n$ have the same parity, we see that the inner integral
is unchanged under $t\mapsto -t$. 
If $\epsilon_1=0$, the outer integral is unchanged under $a\mapsto -a$ if $m$ is even, and zero otherwise, while if $\epsilon_1=1$, the outer integral is unchanged under $a\mapsto -a$ if $m$ is odd, and zero otherwise. Thus in both cases, the integral \eqref{zetasexplicit} is non zero only if $\epsilon_1$ and $m$ have the same parity. 
It follows that $\Psi(s,W_{n})$ is a linear combination with positive coefficients of terms of the form
\begin{equation}\label{nonzeroint}
4\int_0^{\infty}a^{\nu_1+m+s}\int_{0}^{\infty}t^{\nu_1-\nu_2+2m-n}
e^{-\pi(a^2t^2+t^{-2})}d^{\times}td^{\times}a,
\end{equation}
where $\epsilon_1$ and $m$ have the same parity. After the change of variable $t^2/a\mapsto t$, and using the formula 
\[
\int_0^{\infty}\int_0^{\infty}e^{-a(t+t^{-1})/2}t^{\nu}d^{\times}ta^sd^{\times} a=2^{s-2}\Gamma\left(\frac{s+\nu}{2}\right)\Gamma\left(\frac{s-\nu}{2}\right)
\]
for the Mellin transform of Macdonald $K$-Bessel function, the term
\eqref{nonzeroint} becomes
\begin{flalign}\label{gammaforwn}
&4\int_0^{\infty}a^{\frac{\nu_1+\nu_2+n}{2}+s}\int_{0}^{\infty}t^{\frac{\nu_1-\nu_2+2m-n}{2}}
e^{-2\pi a(t+t^{-1})/2}d^{\times}td^{\times}a\nonumber\\
&=(2\pi)^{-\frac{\nu_1+\nu_2+n}{2}}\pi^{-s}\Gamma\left(\frac{s+\nu_1+m}{2}\right)\Gamma\left(\frac{s+\nu_2+n-m}{2}\right).
\end{flalign}
We can recover the $L$-function
$L(s,\pi_{\infty})$ from \eqref{gammafactor} as follows.
\begin{enumerate}
 \item If $\epsilon=0$ and $\epsilon_1=\epsilon_2=0$, we take $n=0$. 
 \item If $\epsilon=0$ and $\epsilon_1=\epsilon_2=1$, we take $n=2$. In this case, only the term corresponding to $m=1$ will yield a non-zero term.
 \item If $\epsilon=1$, $\epsilon_1=1$ and $\epsilon_2=0$, we take
$n=1$. In this case, only the term corresponding to $m=1$ will yield a non-zero term.
\end{enumerate}
The last possibility -- $\epsilon=1$, $\epsilon_1=0$ and $\epsilon_2=1$, is already covered by the third case. Thus, with the choices above, we have in all cases
\[
\Psi(s,W_n)=L(s,\tau_{\infty}).
\]
Going back to equation \eqref{gammaforwn}, we see that 
the functional equation for the gamma function shows that 
the expression in \eqref{gammaforwn} has the form $Q(s)L(s,\tau_{\infty})$ for some polynomial $Q(s)$ with leading term
$s^{[n/2]}(2\pi)^{n/2}$ in cases (1) and (3) above, and 
$s^{[n-2/2]}(2\pi)^{n/2}$ in case (2). Recall that $\Psi(s,W_n)$ is a linear combination with positive integral coefficients of such terms and is thus once again
has a leading term of the same exponent.
Since $n$ was arbitrary subject only to the condition $n\equiv \epsilon \pmod 2$, it follows that $\Psi(s,W_n)$ form a basis for the space functions which are products of polynomials and $L(s,\tau_{\infty})$, 
and our result follows immediately by choosing $W(P)$ to be a suitable linear combination of the functions $W_n$. 
\end{proof}
As before, let $\chi$ be a Dirichlet character, so $\chi_{\infty}$ is either the trivial or sign character.
Recall that the Whittaker model  $\whit(\tau_{\infty}\otimes\chi_{\infty},\psi_{\infty})$ consists of functions
$W\otimes\chi_{\infty}$, where $W\in \whit(\tau_{\infty},\psi_{\infty})$.  
We will need the local functional equation which we now recall.
If $w=\begin{pmatrix}0&1\\1&0\end{pmatrix}$ is the Weyl element, define
\[
\reallywidecheck{\Psi}(s,W)=\int_{\R^{\times}}W\left[\begin{pmatrix} a&0\\
                                 0&1
                                \end{pmatrix}w\right]\omega_{\infty}(a)|a|^{s-\frac{1}{2}}d^{\times} a.
\] 
                            
The integrals $\Psi(s,W\otimes\chi_{\infty})$ satisfy the local functional equation (Theorem 5.13 of \cite{JaLa70})
\begin{equation}\label{firstlocfneqn}
\frac{\reallywidecheck{\Psi}(1-s,W\otimes\chi_{\infty}^{-1})}{L(1-s,\tilde{\tau}_{\infty}\times\chi_{\infty}^{-1})}=\varepsilon(s,\tau_{\infty}\times\chi_{\infty},\psi_{\infty})\frac{\Psi(s,W\otimes\chi_{\infty})}{L(s,\tau_{\infty}\times\chi_{\infty})},
\end{equation}
where $\tilde{\tau}_{\infty}$ is the representation contragredient to 
$\tau_{\infty}$, and $\varepsilon(s,\tau_{\infty}\times\chi_{\infty}
,\psi_{\infty})$ has the form $r(\tau_{\infty}\times\chi_{\infty},
\psi_{\infty})K(\tau_{\infty}\times\chi_{\infty},\psi_{\infty})^s$ for 
$r(\tau_{\infty}\times\chi_{\infty},\psi_{\infty})\in \C$ and
$K(\tau_{\infty}\times\chi_{\infty},\psi_{\infty})>0$ . For the choice of
character $\psi_{\infty}(x)=e^{2\pi ix}$ (made at the start of this section, as well as before equation \eqref{epsiloninftytwo}), we have $K(\tau_{\infty}\times\chi_{\infty},\psi_{\infty})=1$ and $r(\tau_{\infty}\times\chi_{\infty},\psi_{\infty})=r(\tau_{\infty},\psi_{\infty})$ a constant (independent of $\chi_{\infty}$) given by \eqref{epsiloninftytwo}. Thus, we have
\begin{equation}\label{locfneqn}
\frac{\reallywidecheck{\Psi}(1-s,W\otimes\chi_{\infty}^{-1})}{L(1-s,\tilde{\tau}_{\infty}\times\chi_{\infty}^{-1})}=r(\tau_{\infty},\chi_{\infty},\psi_{\infty})\frac{\Psi(s,W\otimes\chi_{\infty})}{L(s,\tau_{\infty}\times\chi_{\infty})},
\end{equation}

Assume now that $\tau_{\infty}$ and $\chi_{\infty}$ are both unitary. Then
we can write $\nu_1=\nu+ib_1$ and $\nu_2=-\nu+ib_2$ for $\nu, b_1,b_2\in \R$.
It follows that 
\[
L(1-s,\tilde{\tau}_{\infty}\times\chi_{\infty}^{-1})=\widetilde{L}(1-s,\tau_{\infty}\times\chi_{\infty}).
\]
From \eqref{gammaforwn} we see that for $W_n$, 
\[
\reallywidecheck{\Psi}(1-s,W_{n}\otimes \chi_{\infty}^{-1})=\widetilde{\Psi}(1-s,W_{n}\otimes\chi_{\infty}),
\]
and the same holds for the Whittaker function $W(P)$ for any polynomial $P$, since it is simply a linear combintation of the functions $W_n$. 
Using Theorem \ref{testvecforps} for $\tau_{\infty}\otimes\chi_{\infty}$, we can find a vector $W^{\prime}=W(p(s,\chi)\tilde{q}(1-s,\chi))$ which is a linear combination of vectors $W_n\otimes\chi_{\infty}$. Using this vector in the local functional equation \eqref{locfneqn} yields
\begin{equation}\label{modlocfneqn}
\frac{L(1-s,\tilde{\tau}_{\infty}\times\chi_{\infty}^{-1})}{\widetilde{\Psi}(1-s,W^{\prime}\otimes\chi_{\infty})}=r(\tau_{\infty},\chi_{\infty},\psi_{\infty})^{-1}\frac{L(s,\tau_{\infty}\times\chi_{\infty})}{\Psi(s,W^{\prime}\otimes\chi_{\infty})}.
\end{equation}
Muliplying \eqref{ftwopolyfneqn} by the equation above, we obtain the following theorem.
\begin{theorem}\label{correctfneqnthm} Let $\pi$ and $\rho$ satisfy the hypotheses of Lemma \ref{gtwofinite}. There is an irreducible unitary representation $\tau_{\infty}$ of $\gltwor$ such that for any
 primitive Dirichlet character $\chi$,
\begin{equation}\label{correctfneqn}
 \Phi_2(s,\chi)=L(s,\tau_{\infty}\times\chi_{\infty})
 F_2(s,\chi)=r(\tau_{\infty},\chi_{\infty},\psi_{\infty})^{-1}
 E_2(s,\chi)\widetilde{\Phi_2}(1-s,\chi),
\end{equation}
where $E_2(s,\chi)$ is defined in \eqref{etwodefn}.
\end{theorem}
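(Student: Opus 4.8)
The plan is to strip the spurious polynomial factor off the functional equation produced in Proposition~\ref{ftwopolyfneqnthm} by reinterpreting it, together with $L(s,\tau_{\infty}\times\chi_{\infty})$, as an archimedean zeta integral for a cleverly chosen Whittaker vector, and then feeding that integral through the local functional equation for $\gltwor$.

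Concretely: since $\pi$ and $\rho$ meet the hypotheses of Lemma~\ref{gtwofinite}, that lemma gives that $G_2$ has finitely many zeros, so Lemma~\ref{correctgamma} supplies the irreducible unitary representation $\tau_{\infty}$ and the coprime polynomials $p(s,\chi),q(s,\chi)$ (all of whose zeros lie in $\re(s)<1/2$) with \eqref{quotfneqn}, and Proposition~\ref{ftwopolyfneqnthm} gives \eqref{ftwopolyfneqn}. First I would apply Theorem~\ref{testvecforps} to the representation $\tau_{\infty}\otimes\chi_{\infty}$ with the polynomial $P(s)=p(s,\chi)\,\tilde q(1-s,\chi)$ (which is a polynomial in $s$), obtaining $W'=W(p(s,\chi)\tilde q(1-s,\chi))\in\whit(\tau_{\infty}\otimes\chi_{\infty},\psi_{\infty})$, a finite linear combination of the $W_n\otimes\chi_{\infty}$, for which $\Psi(s,W'\otimes\chi_{\infty})=p(s,\chi)\tilde q(1-s,\chi)L(s,\tau_{\infty}\times\chi_{\infty})$. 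Then the left-hand side of \eqref{ftwopolyfneqn} is precisely $\Psi(s,W'\otimes\chi_{\infty})F_2(s,\chi)$, and its right-hand side, being the $\widetilde{\cdot}$-conjugate evaluated at $1-s$, factors as $E_2(s,\chi)\,\widetilde{\Psi}(1-s,W'\otimes\chi_{\infty})\,\widetilde{F_2}(1-s,\chi)$.

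Next I would invoke the archimedean local functional equation. Because $W'$ is a linear combination of the $W_n\otimes\chi_{\infty}$ and $\tau_{\infty},\chi_{\infty}$ are unitary, the identities $\reallywidecheck{\Psi}(1-s,W'\otimes\chi_{\infty}^{-1})=\widetilde{\Psi}(1-s,W'\otimes\chi_{\infty})$ and $L(1-s,\tilde{\tau}_{\infty}\times\chi_{\infty}^{-1})=\widetilde{L}(1-s,\tau_{\infty}\times\chi_{\infty})$, together with $K(\tau_{\infty}\times\chi_{\infty},\psi_{\infty})=1$ for $\psi_{\infty}(x)=e^{2\pi ix}$, turn \eqref{firstlocfneqn} into \eqref{modlocfneqn}. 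Substituting the value of the ratio $\widetilde{\Psi}(1-s,W'\otimes\chi_{\infty})/\Psi(s,W'\otimes\chi_{\infty})$ read off from \eqref{modlocfneqn} (so only the constant $r(\tau_{\infty},\chi_{\infty},\psi_{\infty})^{-1}$ and the ratio $L(1-s,\tilde{\tau}_{\infty}\times\chi_{\infty}^{-1})/L(s,\tau_{\infty}\times\chi_{\infty})$ survive) into the factored form above, dividing through by $\Psi(s,W'\otimes\chi_{\infty})$ and multiplying back by $L(s,\tau_{\infty}\times\chi_{\infty})$, makes the dependence on $W'$ disappear; recombining the conjugated factors with $\Phi_2(s,\chi):=L(s,\tau_{\infty}\times\chi_{\infty})F_2(s,\chi)$ leaves exactly \eqref{correctfneqn}.

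The manipulation is purely formal: all the real content is already packaged in Lemma~\ref{correctgamma} (the gamma-factor cancellation) and Theorem~\ref{testvecforps} (realizing any polynomial as $\Psi(s,\cdot)/L(s,\tau_{\infty})$). The only points to watch are the bookkeeping of the $\widetilde{\cdot}$ operation (complex conjugation composed with $s\mapsto 1-\bar s$), so that the polynomial $\tilde q(1-s,\chi)$ from Lemma~\ref{correctgamma} is exactly what re-emerges from the conjugated side of \eqref{ftwopolyfneqn}, and keeping track of which archimedean $\varepsilon$-factor sits inside $E_2(s,\chi)$ versus the $r(\tau_{\infty},\chi_{\infty},\psi_{\infty})$ coming from \eqref{modlocfneqn} — that discrepancy being precisely the extra constant in \eqref{correctfneqn}. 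So there is no genuine obstacle at this step; it is an assembly of already-proved ingredients.
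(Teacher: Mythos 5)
Your proposal reconstructs the paper's own argument: apply Theorem~\ref{testvecforps} to $\tau_{\infty}\otimes\chi_{\infty}$ with $P(s)=p(s,\chi)\tilde q(1-s,\chi)$ to produce the Whittaker vector $W'$, rewrite the archimedean factor in \eqref{ftwopolyfneqn} as $\Psi(s,W'\otimes\chi_{\infty})$, and then multiply through by the local functional equation \eqref{modlocfneqn} (with its conjugation identities and $K=1$) so that the polynomial factor drops out and only $r(\tau_{\infty},\chi_{\infty},\psi_{\infty})^{-1}$ survives. This is exactly the paper's proof, just narrated in slightly more expansive language.
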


\section{The epsilon factor associated to $F_2(s,\chi)$}\label{secepsilon}

Theorem \ref{correctfneqnthm} tells us that the factor $G_2(s,\chi)$ that  appears in the functional equation of $F_2(s,\chi)$ is of $\gltwog$-type, that is, it has the same form as the archimedean $L$-function of a cuspidal automorphic representation of $\gltwo$. We will choose a Dirichlet character $\chi_0$ which is
sufficiently highly ramified at a certain finite set of primes and we set
$F_2(s,\chi_0)=F_0(s)$. In this section we will show that if $\chi$ is a
Dirichlet character ramified only at primes not in the chosen finite set,
the factor 
$E_0(s,\chi)=E_2(s,\chi_0\times\chi)$ that appears in the functional equation of
$F_0(s,\chi)$ is also of $\gltwog$-type. We will need one more ingredient from the local representation theory to accomplish this.

Assume that $\sigma=\otimes_v^{\prime}$ is an irreducible admissible representation of $\gln$ with a
central character $\omega_{\sigma}$. Let $\omega_{\sigma,v}$ be its component at the place $v$.
As before, $S_{\sigma}$ will denote the set of primes at which the representation $\sigma$ is ramified. 
The notation $L(s,\sigma,\chi)$ will denote the twist
of the Dirichlet series $L(s,\sigma)$ by $\chi$ as defined in \eqref{selchartwist} while
$L(s,\sigma\times\chi)$ denotes the twist in the sense of automorphic $L$-functions.
Suppose $S_{\sigma}$ is the set of primes where
$\sigma$ is ramified. Let $\chi$ be a Dirichlet character which is
unramified at all the primes of $S_{\sigma}$. It is not hard to 
see that $L(s,\sigma,\chi)=L(s,\sigma\times\chi)$. If we allow $\chi$ to be ramified at the
places of $S_{\sigma}$ the picture is more complicated, and is described by a theorem of Jacquet 
and Shalika on the ``stability of 
gamma factors'' (see the Proposition in (2.2) of \cite{JaSh85}) which we recall below. Part of the content of their theorem  is 
that the two kinds of twisting we have defined are the same provided that $\chi$ is sufficiently highly ramified at the places 
where $\sigma$ is ramified.
\begin{theorem}\label{stability}[Jacquet-Shalika] There exists an integer $M_{\sigma}$,
such that for all primitive Dirichlet characters $\chi_0$ with ramification greater than 
$p^{M_{\sigma}}$ at the places of $p\in S_{\sigma}$, the following hold:
\begin{enumerate}
\item $L(s,\sigma_p\times \chi_{0,p})=L(s,\sigma_p,\chi_{0,p})=1$, and 
\item $\varepsilon(s,\sigma_p\times\chi_{0,p},\psi_p)=\varepsilon(s,\chi_{0,p},\psi_p)^{n-1}\varepsilon(s,\omega_{\sigma,p}\chi_{0,p},\psi_p)$.
\end{enumerate}
\end{theorem}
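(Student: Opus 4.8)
The plan is to obtain both assertions as a repackaging of the stability of $\gamma$-factors proved by Jacquet and Shalika (the Proposition in (2.2) of \cite{JaSh85}), together with the elementary fact that local $L$-factors trivialise under sufficiently ramified twists. Since $S_\sigma$ is finite, it is enough to exhibit a threshold at each $p\in S_\sigma$ and then take $M_\sigma$ to be the largest of these.

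First I would dispose of part (1). Writing $\sigma_p$ via its Langlands data $\sigma_p=\sigma_{1,p}\boxplus\cdots\boxplus\sigma_{k,p}$ with $\sigma_{i,p}$ (quasi-)cuspidal on ${\rm GL}_{n_i}(\qp)$ and using the multiplicativity of $L$-factors \eqref{lfnofindrep}, the Godement-Jacquet factor $L(s,\sigma_p\times\chi_{0,p})$ is the product of the factors $L(s,\sigma_{i,p}\times\chi_{0,p})$; once the conductor exponent of $\chi_{0,p}$ exceeds the conductor exponents of all the $\sigma_{i,p}$, each such factor equals $1$ (for $n_i\ge 2$ this is automatic, for $n_i=1$ it holds because $\sigma_{i,p}\chi_{0,p}$ is then ramified), hence $L(s,\sigma_p\times\chi_{0,p})=1$. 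On the Dirichlet-series side, if $\chi_0$ is ramified at $p$ then $\chi_0(p)=0$, so the $p$-th Euler factor of the twisted series $L(s,\sigma,\chi_0)$ reduces to its constant term, i.e.\ $L(s,\sigma_p,\chi_{0,p})=1$. This proves (1).

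For part (2), the Jacquet-Shalika stability theorem gives, for $\chi_{0,p}$ ramified beyond a threshold depending only on $\sigma_p$,
\[
\gamma(s,\sigma_p\times\chi_{0,p},\psi_p)=\gamma(s,\chi_{0,p},\psi_p)^{\,n-1}\,\gamma(s,\omega_{\sigma,p}\chi_{0,p},\psi_p).
\]
I would then invoke $\gamma(s,\cdot,\psi_p)=\varepsilon(s,\cdot,\psi_p)\,L(1-s,\widetilde{\cdot})/L(s,\cdot)$: a ramified ${\rm GL}_1$ character has trivial $L$-factor, so for $\chi_{0,p}$ ramified enough both $\chi_{0,p}$ and $\omega_{\sigma,p}\chi_{0,p}$ (the latter has conductor equal to that of $\chi_{0,p}$ once $\chi_{0,p}$ dominates $\omega_{\sigma,p}$) have $\gamma$-factor equal to $\varepsilon$-factor, while on the left part (1) gives $\gamma(s,\sigma_p\times\chi_{0,p},\psi_p)=\varepsilon(s,\sigma_p\times\chi_{0,p},\psi_p)$. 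Substituting these identities into the displayed equation produces exactly the claimed formula for $\varepsilon(s,\sigma_p\times\chi_{0,p},\psi_p)$, after enlarging $M_\sigma$ if necessary.

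There is no genuine obstacle here, since the hard analytic input is \cite{JaSh85}; the only thing needing care is the bookkeeping of conductor exponents, so that a single integer $M_\sigma$ simultaneously forces the triviality of the twisted $L$-factors appearing above, matches the conductor of $\omega_{\sigma,p}\chi_{0,p}$ to that of $\chi_{0,p}$, and meets the Jacquet-Shalika stability bound, at every $p$ in the finite set $S_\sigma$.
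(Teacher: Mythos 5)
The paper does not actually supply a proof of Theorem~\ref{stability}: it is quoted as a result of Jacquet and Shalika, with a pointer to the Proposition in (2.2) of \cite{JaSh85} and a one-line gloss (``stability of gamma factors'') explaining why it implies that the two notions of twisting coincide for highly ramified $\chi$. Your derivation correctly unpacks that citation. You identify that the source proves stability of the local $\gamma$-factor, that for sufficiently ramified $\chi_{0,p}$ the local $L$-factors $L(s,\sigma_p\times\chi_{0,p})$, $L(s,\chi_{0,p})$, and $L(s,\omega_{\sigma,p}\chi_{0,p})$ (and their contragredient counterparts) all equal $1$, and that the relation $\gamma(s,\cdot,\psi_p)=\varepsilon(s,\cdot,\psi_p)\,L(1-s,\widetilde{\cdot})/L(s,\cdot)$ then converts the $\gamma$-factor identity into the stated $\varepsilon$-factor identity; the Dirichlet-series side of part~(1) follows because $\chi_0(p)=0$ kills the $p$-Euler factor of $L(s,\sigma,\chi_0)$; and the uniform threshold $M_\sigma$ exists because $S_\sigma$ is finite and all the constraints (trivialising $L$-factors, matching the conductor of $\omega_{\sigma,p}\chi_{0,p}$ to that of $\chi_{0,p}$, and the Jacquet--Shalika bound) are local and one-sided. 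This is the correct and standard route, and it is consistent with what the paper implicitly relies on; the only caveat worth flagging is that your ``for $n_i\ge 2$ this is automatic'' step uses that the $\sigma_{i,p}$ in the Langlands/isobaric data are supercuspidal (so that $L(s,\sigma_{i,p}\times\chi)=1$ for any $\chi$), which is what the paper's ``(quasi-)cuspidal'' means here; stated that way the argument is airtight.
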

\begin{remark} Analogues of the theorem above have been proved for the tensor product $L$-functions (a consequence of the Local Langlands Conjecture in \cite{HaTa01}, for instance), and for the exterior and symmetric square $L$-functions in \cite{CoShTs2017}.
\end{remark}

We recall that for a Dirichlet character $\chi_1$ of conductor $D_1$, 
\[
\varepsilon(s,\chi_1)=(-i)^{\epsilon_{\chi_1}}\tau(\chi_1) D_1^{-s}
\]
where $\tau(\chi_1)$ is the Gauss sum associated to $\chi_1$ and the additive character $\psi$. This follows easily from 
the definitions of the local $\varepsilon$-factors given in \eqref{epschip} and \eqref{epschiinfty}.
If $\chi_2$ is a Dirichlet character of conductor $D_2$ with $(D_1,D_2)=1$, one sees that 
\[
\varepsilon(s,\chi_1\chi_2)=\chi_1(D_2)\chi_2(D_1)\varepsilon(s,\chi_1)\varepsilon(s,\chi_2).
\]
We apply the theorem above to the representations $\pi$ and $\rho$ satisfying the hypotheses of Lemma \ref{ftwopolyfneqn} and having automorphic central characters. Let $S_{\pi}$ and $S_{\rho}$ denote the respective set of primes where $\pi$ and $\rho$ are ramified and $M_{\pi}$ and $M_{\rho}$ be chosen as in Theorem \ref{stability}. Finally, set 
$M=\max\{M_{\pi},M_{\rho},K\}+1$. We choose a primitive Dirichlet character $\chi_0$ such that $\chi_{0,p}$ has conductor $p^M$ for all $p\in S_{\pi}\bigcup S_{\rho}$, so the conclusions of Theorem \ref{stability} hold for each of the representations $\pi$ and $\rho$. Let the conductor of $\chi_{0}$ be $N_0=\prod_{i=1}^kp^M$. Then, the conductors of $\pi\times\chi_0$ and $\rho\times\chi_0$ are $N_0^n$ and $N_0^{n-2}$ respectively. Let $\chi$ be a (primitive) Dirichlet character of conductor $D$ which is unramified at all the places in $S=S_{\pi}\bigcup S_{\rho})$. Using Theorem \ref{stability}, we have
\[
\varepsilon(s,\pi_{p}\times\chi_{0,p}\chi_p)=\varepsilon(s,\chi_{0,p}\chi_p)^{n-1}\varepsilon(s,\omega_{\pi,p}\chi_{0,p}\chi_p)
\]
and 
\[
\varepsilon(s,\rho_{p}\times\chi_{0,p}\chi_p)=\varepsilon(s,\chi_{0,p}\chi_p)^{n-3}\varepsilon(s,\omega_{\rho,p}\chi_{0,p}\chi_p)
\]
for all $p\in S_{\pi}\cup S$. The corresponding global $\varepsilon$-factors are
\[
\varepsilon(s,\pi\times\chi_0\chi)=\omega_{\pi}(D)\chi_0(D)^n
\varepsilon(s,\omega_{\pi}\chi_0)\varepsilon(s,\chi_0)^{n-1}\chi(N_0)^n\varepsilon(s,\chi)^n
\]
and 
\[
\varepsilon(s,\rho\times\chi_0\chi)=\omega_{\rho}(D)\chi_0(D)^{n-2}\varepsilon(s,\omega_{\rho}\chi_0)\varepsilon(s,\chi_0)^{n-2-k}\chi(N_0)^{n-2}\varepsilon(s,\chi)^{n-2}.
\]
Let $N=N_0^2$. Taking the quotients of the two $\epsilon$-factors yields
\begin{equation}\label{ezerofactor}
E_0(s,\chi):=E_2(s,\chi_0\chi)=c_{\chi_0}\omega_{\pi}\omega_{\rho}^{-1}\chi_0^2(D)N^{1/2-s}\chi(N)\varepsilon(s,\chi)^2,
\end{equation}
where 
\[
c_{\chi_0}=\frac{r(\omega_{\pi}\chi_0,\psi)}{r(\omega_{\rho}\chi_0,\psi)}.
\]
does not depend on $\chi$ or $D$. We recall that the archimedean $\varepsilon$-factor is indepenedent of the character $\chi_{\infty}$ by which it is twisted. Hence, the factor$r(\tau_{\infty},\chi_{\infty},\psi_{\infty})^{-1}$ that appears in \eqref{correctfneqn} is indepependent of $\chi$. Combining our calculations above with \eqref{correctfneqn}, we see that
the twists $F_0(s,\chi):=F(s,\chi_0\chi)$ by primitive Dirichlet characters $\chi$ satisfy the functional equations
\begin{equation}\label{finalfneqn}
\Phi_0(s,\chi)=L(s,\tau_{\infty})F_0(s,\chi)=c_{\chi_0}^{\prime}\omega_{\pi}\omega_{\rho}^{-1}(D)N^{1/2-s}\chi(N)\varepsilon(s,\chi)^2
\tilde{\Phi}_0(1-s,\chi),
\end{equation}
where $c_{\chi_0}^{\prime}$ does not depend on $\chi$.

\section{A $\gltwog$ converse theorem and the proof of Theorem \ref{zerothm}}\label{conversesec}

The final ingredient we will require is a version of Weil's converse theorem valid for all Dirichlet series satisfying a
$\gltwog$-type functional equation.

We recall that if $F(s)=\sum_{n=1}^{\infty}a_nn^{-s}$ and $\chi$ is a Dirichlet character, we have defined $F(s,\chi)=\sum_{n=1}^{\infty}\chi(n)a_nn^{-s}$. If $S$ is a finite set of non-archimedean places of $\Q$, we define 
\[
F_S(s)=\sum_{(n,p)=1\,\text{if $p\in S$}}\frac{a_n}{n^{s}}.
\]
This is obviously consistent with the earlier notation for partial $L$-functions.

\begin{theorem}\label{weilplus} Let $\tau_{\infty}$ be an irreducible unitary representation of $\gltwor$. We denote by $k$ the weight of $\tau_{\infty}$ (resp. parity) if it is a (limit of) discrete series representation (resp. principal series representation). Let $D\in \N$ be fixed and let $N\in \N$ with $(N,D)=1$ be arbitrary. Let $\psi$ be a Dirichlet character modulo $N$ and assume that $\psi(-1)=(-1)^k$. 
We will assume that $F(s)$ is a Dirichlet series satisfying (P1), (P2), and the following condition.
\begin{enumerate} 
\item[(W3)] There exists a Dirichlet series $F_1(s)=\sum_{n=1}^{\infty} b_nn^{-s}$ satisfying (P1), such that for every primitive Dirichlet character $\chi \pmod D$ the functions
\[
\Phi(s,\chi)=L(s,\tau_{\infty}\times\chi_{\infty})F(s,\chi)\,\,\text{and}\,\, L(s,\tilde{\tau}_{\infty}\times\chi_{\infty}^{-1})F_1(s,\chi)
\]
are entire and satisfy functional equations of the form
\begin{equation}\label{weilfneqn}
\Phi(s,\chi)=\psi(D)N^{1/2-s}\chi(N)\varepsilon(s,\chi)^2\tilde{L}(1-s,\sigma_{\infty}\times\chi_{\infty})F_1(1-s,\bar{\chi}).
\end{equation}
\end{enumerate}
Then there exist distinct automorphic representations $\tau_i$, $1\le i\le r$, with $L(s,\tau_{i,\infty})=L(s,\tau_{\infty})$ for all
$1\le i\le r$, such that 
\begin{equation}\label{flincomb}
F_S(s)=\sum_{i=1}^{r}c_iL_S(s,\tau_i)
\end{equation}
for some constants $c_i$, $1\le i\le r$.
\end{theorem}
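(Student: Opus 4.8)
The plan is to recognise hypotheses (P1), (P2) and (W3) as precisely the hypotheses of a $\gltwog$-type converse theorem, apply it to produce a cusp form, and then read off \eqref{flincomb} from the structure theory of such forms. When $\tau_{\infty}$ is a (limit of) discrete series of weight $k$ the relevant theorem is that of Weil \cite{Weil67}; when $\tau_{\infty}$ is a unitary principal series of parity $k$ it is the analogue for Maass forms. In both cases the input is a pair of Dirichlet series, together with functional equations for all their character twists: here $F(s)$ and the series $F_1(s)$ of (W3) play these two roles, and once one observes that $\psi(D)N^{1/2-s}\chi(N)\varepsilon(s,\chi)^2$ is exactly the product of Gauss-sum and conductor factors occurring in the twisted Hecke functional equation at level $N$ and nebentypus $\psi$, the equations \eqref{weilfneqn} (for $F$, together with the companion equations for $F_1$) are precisely the twisted functional equations the converse theorem demands. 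The growth bound (P2) supplies the required vertical-strip (finite order) estimate, and the hypothesis that every $\Phi(s,\chi)$ be \emph{entire}, rather than merely meromorphic with controlled poles, forces the resulting form to be cuspidal rather than Eisenstein. Thus, after the standard normalisation of the variable, $F(s)=L(s,f)$ for a holomorphic (resp.\ Maass) cusp form $f$ of weight $k$, level $N$ and character $\psi$, and $F_1$ corresponds to the image of $f$ under the Fricke--Atkin--Lehner involution.

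Next I would decompose $f$ using the theory of newforms (Atkin--Lehner--Li and its adelic reformulation; the Maass analogue is classical as well). Writing $f=\sum_i\sum_{d\mid N/N_i}c_{i,d}\,f_i(dz)$, where the $f_i$ run over the newforms of levels $N_i\mid N$ occurring in $f$, each $f_i$ generates a cuspidal automorphic representation $\tau_i$ of $\gltwo$; since every $f_i(dz)$ shares the weight (resp.\ Laplace eigenvalue) pinned down by the archimedean factor $L(s,\tau_{\infty}\times\chi_{\infty})$ in \eqref{weilfneqn}, we have $L(s,\tau_{i,\infty})=L(s,\tau_{\infty})$ for all $i$. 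Translating into Dirichlet series, $f_i(dz)$ contributes $d^{-s}L(s,\tau_i)$, so
\[
F(s)=\sum_i Q_i(s)\,L(s,\tau_i),\qquad Q_i(s)=\sum_{d\mid N/N_i}c_{i,d}\,d^{-s},
\]
a finite sum in which each $Q_i$ is a Dirichlet polynomial supported on integers all of whose prime divisors divide $N$.

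Finally I would pass to the partial series. Enlarging $S$ so that it contains every prime dividing $N$ — which is the regime (``$S$ large enough'') in which the theorem is applied and which entails no loss — the deletion from $F(s)$ of all terms divisible by a prime of $S$ kills every $d>1$ contribution in each $Q_i$, so that $F_S(s)=\sum_i c_{i,1}L_S(s,\tau_i)$; discarding the indices with $c_{i,1}=0$ and keeping the distinct $\tau_i$ yields \eqref{flincomb} with $c_i=c_{i,1}$. The genuine content is concentrated in the first step: one must run the holomorphic and Maass-type converse theorems uniformly and in the generality required here (no Euler product for $F$, arbitrary nebentypus, and the precise shape of the $\varepsilon$-factor), and then carry out the elementary but careful bookkeeping that turns the \emph{a priori} mixture of oldforms and newforms into the clean identity \eqref{flincomb}. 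The remaining steps are routine once the classical converse theorems and newform theory are in hand.
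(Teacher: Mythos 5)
Your approach — run a $\gltwog$-type converse theorem (Weil in the holomorphic case, the Maass analogue in the principal-series case), produce a form of weight/parity $k$, level $N$ and nebentypus $\psi$, then decompose and pass to $F_S$ — is the same route the paper takes; the paper handles it in the remarks following the statement rather than a displayed proof, and your newform/oldform bookkeeping is a reasonable fleshing-out of what the paper leaves implicit.

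One claim in your write-up goes beyond both what the theorem asserts and what the paper argues, and I do not think it is justified as stated: you say that the entirety of every $\Phi(s,\chi)$ ``forces the resulting form to be cuspidal rather than Eisenstein.'' The theorem only concludes that the $\tau_i$ are automorphic representations (the paper's remark explicitly says ``modular form,'' not ``cusp form''), and your argument for cuspidality is not a proof. An Eisenstein contribution has an $L$-function of the shape $L(s,\chi_1)L(s+1-k,\chi_2)$; the potential poles occur only when $\chi_1\bar\chi$ or $\chi_2\bar\chi$ is trivial, and nothing in the hypotheses guarantees that the characters $\chi\pmod D$ realize such a twist (e.g.\ $D=1$ gives only the trivial twist). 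Even for the trivial twist, $F$ is a linear combination and a pole of one summand can in principle cancel against another, so entirety of $\Phi(s,1)$ alone does not rule out Eisenstein pieces. This matters because your subsequent step invokes Atkin--Lehner--Li newform theory, which is a statement about the cuspidal spectrum; without cuspidality you would additionally have to decompose the Eisenstein part against its classical ``new/old'' basis of Eisenstein series attached to pairs of characters. That decomposition exists and yields the same conclusion (those pieces correspond to isobaric, i.e.\ non-cuspidal, $\tau_i\in\auttwoall$, which the theorem allows), but as written your proof has a gap here: either justify the cuspidality rigorously, or — more simply, and matching what the paper actually does — drop the cuspidality claim, decompose the whole space $M_k(\Gamma_0(N),\psi)$ (cuspidal plus Eisenstein) against the basis $\{f_i(dz)\}$, and carry each eigenform, cuspidal or Eisenstein, to its automorphic representation $\tau_i$. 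The remainder of your argument (the Dirichlet-polynomial shape of each $Q_i$ supported on $N$-smooth integers, and the observation that $F_S(s)=\sum_i c_{i,1}L_S(s,\tau_i)$ once $S$ contains all primes dividing $N$) is correct and correctly carried out.
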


\begin{remark} A slightly stronger version of the theorem above was formulated and proved by Weil in \cite{Weil67} when $\sigma$ is a discrete series or a limit of discrete series representation. In this case Weil's theorem asserts that 
$f(z)=\sum_{n=1}^{\infty} \chi_0(n)a_ne^{2\pi inz}$ is a modular form of level $N$ and nebentypus $\psi$ of a suitable weight $k$. Since the modular eigenforms give a basis of the space of all modular forms of level $N$, nebentypus $\psi$ and weight $k$, and since each eigenform corresponds to an automorphic representation of $\gltwo$, we see that the conclusion of our theorem follows.
\end{remark}
\begin{remark}
When $\tau_{\infty}$ is a principal series representation, Weil's proof essentially goes through with minimal changes. Weil's proof hinges on producing an elliptic element $M\in \sltwor$ of infinite order such that $(g\vert_{\gamma}-\psi(D)g)\vert_M=g\vert_{\gamma}-\psi(D)g$, where $g(z)=\sum_{n=1}^{\infty}b_ne^{2\pi inz}$, the candidate modular form of which $F_1(s)$ will be the Mellin tranform,  for all $\gamma$ in a set of elements generating $\Gamma_0(N)$. This can be deduced from the functional equations for sufficiently many character twists of $F$.
On the other hand he shows that any holomorphic function $f$, such that $f\vert_M=f$ must be identically zero. Although, $f$ is holomorphic in Weil's theorem, the same argument works for real analytic $f$ as well, which is what the candidate automorphic form will be in general. Indeed, what one uses is only the fact that the torus generated by $M$ has a limit point and the principle of analytic continuation for power series.
\end{remark}
\begin{remark} Note that in one sense the functional equation \eqref{weilfneqn} is actually more general than the one satisfied by the elements of $\gsel$ because it allows the Dirichlet series $F_1(s)$ to be arbitrary.
\end{remark}
\begin{remark} As is well known, Theorem \ref{weilplus} requires only that the conditions on the twists $\Phi(s,\chi)$ hold for a suitably chosen finite set of primitive characters $\chi$ (see \cite{IIPS75}). Invoking this stronger form of the theorem does not simplify our proof.
\end{remark}
We are now in a position to prove Theorem \ref{mostgeneral} of this paper, from which Theorem \ref{zerothm} will follow almost immediately.
\begin{theorem} \label{mostgeneral} Let $\pi=\otimes_{v}^{\prime}\pi_v$ and $\rho=\otimes_{v}^{\prime}\rho_v$ be (global) irreducible unitary representations with automorphic central characters of $\gln$ and $\glntwo$ respectively, and with $L$-functions $L(s,\pi\times\chi)$ and $L(s,\rho\times\chi)$ having meromorphic continuations and satisfying functional equations of the form $\eqref{autlfneqn}$ for every primitive Dirichlet character $\chi$. Suppose that $F_2(s):=L(s,\pi)/L(s,\rho)\in \gselp$, that $G_2$ has at most finitely many zeros, and that it satisfies (G5). Then there exists $\tau\in \auttwoall$, and a finite set of finite places $S$ of $\Q$ such that 
\begin{equation}\label{ftwoaut}
F_S(s)=L_S(s,\tau)
\end{equation}
\end{theorem}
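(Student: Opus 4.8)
The strategy is to verify that, after twisting $F_2$ by a single suitably ramified auxiliary character, the resulting Dirichlet series meets every hypothesis of the $\gltwog$-type converse theorem \ref{weilplus}, and then to upgrade the linear combination of automorphic $L$-functions it outputs to a single one using the Euler product of $F_2$. First I would record that $F_2$ has degree $2$: its gamma factor $G_2$ in \eqref{gtwodefn} is a ratio of a degree-$n$ and a degree-$(n-2)$ product of $\Gamma$-factors, so by Theorem \ref{uniquegamma} (and additivity of the degree) $d_{F_2}=2$, i.e.\ $F_2\in\gselp_2\subseteq\gsel_2$. Together with the hypothesis (G5) this places $F_2$ in the setting of Theorem \ref{bookermodthm}, so Corollary \ref{phischientire} gives that $F_2(s,\chi)$ and $\Phi_2(s,\chi)$ are entire for every nonprincipal primitive Dirichlet character $\chi$, which is the holomorphy input the converse theorem requires.

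Next I would put the twisted functional equations into the correct shape. Since $G_2$ has finitely many zeros by hypothesis, Lemma \ref{correctgamma} supplies an irreducible unitary representation $\tau_\infty$ of $\gltwor$ with $G_2(s,\chi)=R(s,\chi)L(s,\tau_\infty\times\chi_\infty)$, $R$ a ratio of coprime polynomials with zeros in $\re(s)<1/2$; Proposition \ref{ftwopolyfneqnthm}, fed the functional equations \eqref{autlfneqn} for $L(s,\pi\times\chi)$ and $L(s,\rho\times\chi)$, produces \eqref{ftwopolyfneqn}, and Theorem \ref{correctfneqnthm} (via the test vectors of Theorem \ref{testvecforps} and the local functional equation \eqref{locfneqn}) clears the rational factor, leaving $\Phi_2(s,\chi)=L(s,\tau_\infty\times\chi_\infty)F_2(s,\chi)=r(\tau_\infty,\chi_\infty,\psi_\infty)^{-1}E_2(s,\chi)\widetilde{\Phi_2}(1-s,\chi)$. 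Choosing the character $\chi_0$ of Section \ref{secepsilon}, highly ramified on $S_\pi\cup S_\rho$, and setting $F_0=F_2(\cdot\,,\chi_0)$, the stability of $\varepsilon$-factors (Theorem \ref{stability}) converts $E_0(s,\chi)=E_2(s,\chi_0\chi)$ into a $\gltwog$-type $\varepsilon$-factor and yields \eqref{finalfneqn}. I would then do the short bookkeeping needed to invoke Theorem \ref{weilplus}: take the conductor $D$ of the twisting characters coprime to $N_0$ (so every such $\chi$ is unramified on $S_\pi\cup S_\rho$, whence $L(s,\pi,\chi_0\chi)=L(s,\pi\times\chi_0\chi)$ and likewise for $\rho$), put $N=N_0^2$, nebentypus $\psi=\omega_\pi\omega_\rho^{-1}$, $F_1=\widetilde{F_0}$, and check that the archimedean $\varepsilon$-factor \eqref{epsiloninftytwo} forces $\psi(-1)=(-1)^k$ with $k$ the weight (resp.\ parity) of $\tau_\infty$. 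Theorem \ref{weilplus} then delivers distinct automorphic representations $\tau_1,\dots,\tau_r$ of $\gltwo$, all with central character $\psi$ and with $L(s,\tau_{i,\infty})=L(s,\tau_\infty)$, such that $F_{0,S}(s)=\sum_{i=1}^r c_iL_S(s,\tau_i)$ for a finite set $S$, which we may take to contain $S_\pi\cup S_\rho$.

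The only genuinely nontrivial remaining step — which I expect to be the main obstacle, since everything above is assembly of results already in place — is collapsing this linear combination to a single term. Here I would use that $F_2$, and hence $F_0$ and $F_{0,S}$, has an Euler product by (P4), so its Dirichlet coefficients $g(n)$ are multiplicative with $g(1)=1$; writing $g(n)=\sum_i c_i\lambda_i(n)$ with $\lambda_i$ the multiplicative coefficients of $L_S(s,\tau_i)$ (so $\sum_i c_i=1$) and comparing $g(pq)$ with $g(p)g(q)$ for distinct primes $p,q\notin S$, multiplicativity of each $\lambda_i$ gives after a short rearrangement $\sum_i d_i(q)\lambda_i(p)=0$ for all such $p$, where $d_i(q)=c_i\big(\sum_{j\neq i}c_j(\lambda_j(q)-\lambda_i(q))\big)$. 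By the linear independence of the Satake parameters of distinct automorphic representations (a consequence of Rankin--Selberg theory and the results of Jacquet and Shalika), every $d_i(q)$ vanishes; when $c_i\neq0$ this forces $g(q)=\lambda_i(q)$ for all $q\notin S$, so all $\tau_i$ with $c_i\neq0$ agree at every $q\notin S$ and share the central character $\psi$, hence coincide by strong multiplicity one. Thus $r=1$, and matching the normalised constant terms of the two Euler products gives $c_1=1$, i.e.\ $F_{0,S}(s)=L_S(s,\tau_1)$. Finally I would untwist: $\tau:=\tau_1\otimes\chi_0^{-1}\in\auttwoall$, and (since $S\supseteq S_\pi\cup S_\rho$ already contains the support of the conductor of $\chi_0$) $F_{2,S}(s)=L_S(s,\tau)$, which is the assertion.
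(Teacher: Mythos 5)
Your proposal follows the same overall route as the paper: invoke Theorem \ref{bookermodthm} and Corollary \ref{phischientire} for holomorphy, pass through Lemma \ref{correctgamma}, Proposition \ref{ftwopolyfneqnthm}, and Theorem \ref{correctfneqnthm} to reach a clean $\gltwog$-type functional equation, stabilise the $\varepsilon$-factor by twisting with $\chi_0$, and feed the result into Theorem \ref{weilplus}. Where you differ is in collapsing the linear combination $F_{0,S}=\sum_i c_i L_S(s,\tau_i)$ to a single term: the paper multiplies through by $L_S(s,\rho)$, rephrases everything as sums of Hecke eigenfunctions on $\gln$, and argues that a linear combination of distinct eigenfunctions cannot again be an eigenfunction; you instead exploit the multiplicativity of the Dirichlet coefficients of $F_0$ (available because $F_0\in\gselp$ satisfies (P4)/(G4)), compare $g(pq)$ with $g(p)g(q)$, and appeal to linear independence of Hecke eigenvalue systems plus strong multiplicity one on $\gltwo$. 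Both arguments ultimately rest on the Jacquet--Shalika results, but yours stays in degree $2$, is more explicit, and makes the role of the Euler product transparent; the paper's is shorter but more telegraphic. Two cosmetic slips to flag: Theorem \ref{bookermodthm} and its corollary give that $\Phi_2(s,\chi)$ is entire and that $F_2(s,\chi)$ has no poles in $0<\re(s)\le 1$, \emph{not} that $F_2(s,\chi)$ itself is entire — it may still have finitely many poles outside the critical strip where $G_2(s,\chi)$ vanishes, which is exactly why the paper keeps track of ``finitely many poles'' rather than ``entire''; and the nebentypus produced by \eqref{ezerofactor} is $\omega_\pi\omega_\rho^{-1}\chi_0^2$, not $\omega_\pi\omega_\rho^{-1}$ (though since $\chi_0^2$ is even this does not affect the parity check $\psi(-1)=(-1)^k$). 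Neither affects the validity of the argument.
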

\begin{proof} 
Since $F_2\in \gselp$ and satisfies (G5) by assumption, we see that the hypotheses of Theorem \ref{bookermodthm} are satisfied. It follows that $F_2(s,\chi)$ is holomorphic in the strip $0<\re(s)\le 1$, for every non-principal character $\chi$. Further, $\Phi_2(s,\chi)$ is entire if $\chi\ne 1$ is primitive by Corollary \ref{phischientire}.

Let $\chi_0$ be the (primitive) character chosen in Section \ref{stability} and recall that $F_0(s)=F_2(s,\chi_0)$. By our remarks above, $F_0(s)$ is holomorphic in $0<\re(s)\le 1$ and $\Phi(s,\chi_0)$ is entire. Since $G_2$ has only finitely many zeros, it follows from Lemma \ref{gtwochifin} that $G_0(s)=G_2(s,\chi_0)$ has only finitely many zeros. It follows that $F_0(s)$ has only finitely many poles, since these poles (which lie outside the strip $0<\re(s)\le 1$) necessarily coincide with the zeros of $G_0(s)$, since $\Phi_0(s)=G_0(s)F_0(s)$ is entire.

Thus, we can now assume that $F_0(s)=L(s,\pi\times\chi_0)/L(s,\rho\times\chi_0)$ extends meromorphically to all of $\C$ with at most finitely many poles. Replacing $F_2$ by $F_0$ in the arguments of the first paragraph of the proof, we see that 
$F_0\in \gselp$. Now, let $\chi$ be a primitive Dirichlet character with conductor coprime to the conductor of $\chi_0$.
Since $L(s,\pi\times\chi_0\chi),L(s,\rho\times\chi_0\chi)\in \gselp$ by Theorem \ref{lautoing}, we know that $F_0(s,\chi)=F_2(s,\chi_0\chi)$ has a meromorphic continuation to all of $\C$ and satisfies \eqref{secquotfneqn}. Thus $F_0$ satisfies all the hypotheses of Proposition  \ref{ftwopolyfneqnthm}, and hence satisfies a functional equation of the form \eqref{ftwopolyfneqn}. Since we have assumed that $\pi$ and $\tau$ are unitary, so are $\pi_{\infty}$ and $\tau_{\infty}$, and the hypotheses of Theorem \ref{correctfneqnthm} hold, whence we know that $F_0(s,\chi)$ satisfies an equation of the form 
\eqref{correctfneqn}. Since $\chi_0$ has been chosen as in Section \ref{stability}, we see that $F_0(s,\chi)$ satisfies a functional equation of the form \eqref{weilfneqn} with $\psi=\omega_{\pi}\omega_{\rho}^{-1}\chi_0^2$, and where $F_1=c_{\chi_0}^{\prime}\tilde{F}_0$. The automorphy hypotheses on the central characters guarantee that $\psi$ is a Dirichlet character upto a unitary twist. Once again, an appeal to Corollary \ref{phischientire} shows that $\Phi_0(s,\chi)$ is entire.

The function $F_0(s)$ thus satisfies all the hypotheses of Theorem \ref{weilplus}. It
follows that 
\[
F_{0,S}(s)=L_S(s,\pi\times\chi_0)/L_S(s,\rho\times\chi_0)=\sum_{i=1}^rc_iL_S(s,\tau_i^{\prime}),
\]
for suitable unitary automorphic representations $\tau_i^{\prime}\in \auttwoall$, $1\le i\le r$ with $L(s,\tau_{i,\infty}^{\prime})=L(s,\tau_{\infty}^{\prime})$. Since $F_{2,S}(s)=F_{0,S}(s,\chi_0^{-1})$, we have
\begin{equation}\label{fnlincomb}
F_{2,S}(s)L_S(s,\rho)=L_S(s,\pi)=L_S(s,\rho)\left[\sum_{i=1}^rc_iL_S(s,\tau_i)\right],
\end{equation}
for $\tau_i=\tau_i^{\prime}\times\chi_0^{-1}$.

Since $\pi$ is admissible and unramified outside $S$, there exists a simultaneous eigenfunction $f$ on $\gln$ of all the Hecke operators $T_p$, $p\notin S$, such that $L(s,\pi)=L(s,f)$. Similarly, there exist eigenfunctions $f_i$ on $\gln$ such that $L(s,\rho)L(s,\tau_i)=L(s,f_i)$. At all places $p$ outside of $S$, we see that we have a sum of eigenfunctions $f_{i}$ which is once again an eigenfunction $f$. It follows that the eigenvalues corresponding to $f_{i}$ must all be the same for all $1\le i\le r$.
Hence, the representations $\tau_{i,p}$, $1\le i\le r$ which are characterised by their Satake parameters must all be the same representation $\tau_p$ at all $p\notin S$. By comparing the constant coefficient of the Dirichlet series on both sides of \eqref{fnlincomb}, we see that $\sum_{i=1}^rc_i=1$, and the theorem follows. 
\end{proof}

We now deduce Theorem\ref{zerothm} from Theorem \ref{mostgeneral}
\begin{proof}[Proof of Theorem \ref{zerothm}]
As we have already observed before, we can assume that $G_2(s)$ has at most a finite number of zeros.
If $\rho\in \temprep_{n-2}$, we know that 
$L(s,\rho)=\prod_{j=1}^kL(s,\rho_j)$ with $\rho_j\in \aut$. 
If $F_2$ has a finite number of poles in $\C$, Theorem \ref{lautoing} together with Theorem \ref{nonvanishthm} shows that $F_2\in \gselp$. By Theorem \ref{sigmaponeptwo} we know that $L(s,\pi)$ and $L(s,\rho)$ satisfy functional equations of the form \eqref{autlfneqn} and by Theorem \ref{nonvanishthm} we see that it satisfies (G5), so all the hypotheses of the Theorem \ref{mostgeneral} are satisfied. It follows
that $L_S(s,\pi)=L_S(s,\rho)L_S(s,\tau)$ for some $\tau\in \auttwoall$.
Twisitng both sides by $\tilde{\tau}$, we see that the right hand side will have a pole at $s=1$ while the right hand side will be holomorphic by \cite{JaSh811}. This proves the theorem.
\end{proof}
\begin{remark} We could have also deduced Theorem \ref{zerothm} from \eqref{fnlincomb} without using \eqref{ftwoaut} as follows. Once again, using \cite{JaSh811}, we see that each of the $L$-functions $L(s,\pi)$ and $L(s,\rho)L(s,\tau_i)$, $1\le i\le r$ are distinct. Then the main result of \cite{KMP06} says that these $L$-functions must be linearly independent, contradicting \eqref{fnlincomb}.
\end{remark}

\section{Applications to the Primitivity of cuspidal $L$-functions}\label{secprim}
 
In this section we will deduce Theorem \ref{primthm} from Corollary \ref{generaln}. We will need previously known classification results for the classes $\asel$ and $\gselp$ to do this. In \cite{Ragh20} and \cite{BaRa20} these are often formulated for series in the larger class $\asel$. We sometimes state only the versions relevant to the class $\gselp$ below.

Recall that $\asel_d$ (resp. $\gselp_d$) is the class of series in $\asel$ (resp. $\gselp$) of degree $d$. The four important classfication theorems for elements of $\gselp$ of small degree recorded below
are Theorems 4.3, 4.7 and 5.1 of \cite{Ragh20} and Theorem 1.1 of \cite{BaRa20} respectively.
\begin{theorem}\label{gselpzero} We have $\gselp_0=\{1\}$.
\end{theorem}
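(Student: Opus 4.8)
The plan is to adapt the classical argument that the Selberg class has no non-trivial element of degree $0$ to the larger class $\gselp$; the one genuinely new feature is that the archimedean factor $G(s)$ in \eqref{gammafactor} may carry gamma functions in its denominator. Since $\gselp\subset\asel$, Theorem~\ref{uniquegamma} already guarantees that $d_F$ is a well-defined invariant, so it is legitimate to fix $F\in\gselp$ with $d_F=0$ and work with any functional equation \eqref{fnaleqn} it satisfies.

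First I would extract the consequences of $d_F=0$. Then $\sum_j\lambda_j=\sum_{j'}\lambda'_{j'}$, so Stirling's formula makes the $e^{-\pi\lambda|t|/2}$ factors in the numerator and denominator of $G(\sigma+it)$ cancel: $|G(\sigma+it)|$ grows only polynomially in $|t|$, uniformly for $\sigma$ in compact sets, with exponent $\sum\re\mu_j-\sum\re\mu'_{j'}-\tfrac{r-r'}{2}$ independent of $\sigma$. Consequently the ratio $\overline{G(1-\bar s)}/G(s)$ that appears when \eqref{fnaleqn} is solved for $F(s)$ has polynomial growth in $|t|$ on vertical strips and, along horizontal lines, is asymptotic to a constant multiple of $\kappa^{-2s}$, where $\kappa>0$ is the explicit constant built from the $\lambda_j,\lambda'_{j'}$, up to a factor $1+O(1/s)$. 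Next I would show that $F$ is entire: by (P2) it has only finitely many poles, by (G1) it is holomorphic for $\re s>1$, by (G3) $G$ is holomorphic for $\re s>1/2$, and combining \eqref{fnaleqn} with the known location of the (finitely many) poles of $G$ in any strip leaves no room for a pole of $F$ in $0\le\re s\le 1$ either.

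The core of the proof then uses the Euler product (P4), and here the strict inequality $\theta<1/2$ in (G4) is essential. For $\re s=\sigma$ large one has $|\log F(s)|\le\sum_{p^k}|b_{p^k}|(p^k)^{-\sigma}\ll 2^{\theta-\sigma}$, whence $F(s)=1+a_2 2^{-s}+O(4^{\theta-\sigma})$ with $|a_2|\le C\,2^{\theta}<\sqrt2$; in particular $F(s)\to 1$ as $\sigma\to+\infty$ and $F$ is bounded and non-vanishing in some half-plane $\re s\ge\sigma_0$. Writing the degree-$0$ functional equation as $F(s)=\omega\,Q^{1-2s}\,\overline{G(1-\bar s)}\,G(s)^{-1}\,\overline{F(1-\bar s)}$ and inserting the Dirichlet series for $\overline{F(1-\bar s)}$ (valid in a left half-plane) together with the Stirling expansion above exhibits $F(s)$, as $\sigma\to-\infty$, with leading behaviour $\asymp (Q\kappa)^{-2\sigma}$. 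A careful comparison of this conductor-governed behaviour with the decay rate $2^{-\sigma}$ of $F(s)-1$ on the right — the borderline case being excluded precisely by $|a_2|<\sqrt2$ — forces $Q\kappa=1$. Once $Q\kappa=1$, degree-$0$ Stirling gives boundedness of $F$ in the critical strip by Phragm\'en--Lindel\"of (the strip has width $1<\pi$ against the order-$1$ growth coming from \eqref{fnaleqn}), and $F$ is bounded in both outer half-planes by the asymptotics just described; so $F$ is a bounded entire function, hence constant, and $a_1=1$ gives $F\equiv 1$.

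The step I expect to be the main obstacle is exactly this last reduction: pinning down the conductor constant $Q\kappa$ and ruling out the borderline behaviour, carried out uniformly in the presence of denominator gamma factors. It is here that (G4) must be used with the strict inequality, and not merely $\theta\le\tfrac12$: the degree-$0$ function $1+\sqrt2\cdot 2^{-s}$ satisfies a functional equation of the shape \eqref{fnaleqn} (with $Q=\sqrt2$, no gamma factors, so $Q\kappa=\sqrt2\neq 1$) and has an Euler product with $\theta=\tfrac12$, so the conclusion $\gselp_0=\{1\}$ genuinely fails once (G4) is weakened. The remaining ingredients — the contour shifts in \eqref{fnaleqn}, the Phragm\'en--Lindel\"of step, and checking that the $1+O(1/s)$ error from Stirling does not affect the leading-order comparison — are routine.
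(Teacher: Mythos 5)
The proposal has a genuine gap at its decisive step. You assert that (G4) gives $|a_2|\le C\cdot 2^{\theta}<\sqrt{2}$, but (P4) allows an \emph{arbitrary} constant $C>0$: the inequality $\theta<1/2$ bounds only the exponential growth rate of $|b_{p^k}|$ in $k$, not the size of any individual coefficient. So $|a_2|=|b_2|$ is completely unconstrained by the axioms, and the inequality $|a_2|<\sqrt{2}$ simply does not follow. With that estimate gone, the ``careful comparison'' you invoke to force $Q\kappa=1$ has nothing to compare: the $2^{-\sigma}$ decay of $F-1$ on the right half-plane and the $(Q\kappa)^{-2\sigma}$ growth of $F$ on the left half-plane live in disjoint regions, and there is no a priori numerical relation between the smallest prime in the support of the Dirichlet series and the conductor constant. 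Your own example already illustrates this: $1+\sqrt{2}\cdot 2^{-s}$ with $Q\kappa=\sqrt{2}$ satisfies a perfectly good degree-$0$ functional equation, and what kills it is not a bound on $a_2$ but the fact that $b_{2^k}=(-1)^{k-1}2^{k/2}/k$ grows like $2^{k/2}/k$, which violates $|b_{2^k}|\le C\,2^{k\theta}$ for every $\theta<1/2$ and every $C$.

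That points at the mechanism actually used (in \cite{Ragh20}, following Conrey--Ghosh and Kaczorowski--Perelli), and it is structurally different from your reduction to $Q\kappa=1$ via Phragm\'en--Lindel\"of. One first shows from the degree-$0$ functional equation that $F$ is a Dirichlet polynomial supported on divisors of a positive integer $N=(Q\kappa)^2$ (this is the genuinely nontrivial step, and it is where the denominator gamma factors must be controlled via Theorem~\ref{uniquegamma} and the Stirling analysis you sketch). The Euler product (P4) then factors $F=\prod_{p\mid N}F_p$ with each $F_p$ a nonconstant polynomial in $p^{-s}$; writing $F_p(s)=\prod_j(1-\gamma_jp^{-s})$ and feeding the roots back into $\log F_p=\sum_k b_{p^k}p^{-ks}$ with the bound $|b_{p^k}|\le Cp^{k\theta}$ shows $|\gamma_j|\le p^{\theta}$, so \emph{all} zeros of $F$ lie in $\re s\le\theta<1/2$. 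Meanwhile (G3) places all zeros of $G$ in $\re s<1/2$. Thus every zero of $\Phi=Q^sGF$ lies strictly left of the critical line, contradicting the $s\mapsto 1-\bar s$ symmetry of the zero set of $\Phi$ coming from \eqref{fnaleqn} --- unless $\Phi$ has no zeros, which forces $N=1$ and $F\equiv 1$. This is precisely where the strictness $\theta<1/2$ enters: it pushes the zeros of each local factor off the critical line, whereas with $\theta=1/2$ the zeros of $1+\sqrt{2}\cdot2^{-s}$ sit exactly on $\re s=1/2$ and the contradiction evaporates. The strictness in (G3) plays the parallel role for the denominator gammas, which is the new feature relative to $\sel$.

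One smaller point: your claim that $F$ is entire is not established by the reasoning given. (P2) only grants finitely many poles, and the functional equation with (G3) does not obviously exclude poles of $F$ in $1/2<\re s\le 1$ cancelling against zeros of $G$ after reflection. Fortunately the correct argument does not rely on $F$ being entire, only on the Dirichlet-polynomial structure and the zero locations.
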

\begin{theorem}\label{gselpzeroone}  If $0<d<1$, then $\aseld=\emptyset$.
\end{theorem}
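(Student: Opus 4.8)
This is the analogue for $\asel$ of the theorem of Kaczorowski and Perelli (see \cite{KaPe99}) that the extended Selberg class has no element of degree strictly between $0$ and $1$, and the plan is to adapt their method. The extra work, relative to the Selberg-class case, is to accommodate (i) gamma factors in the \emph{denominator} of $G(s)$ (the factors with $r'>0$), so that $G$ itself has trivial zeros as well as trivial poles; (ii) the absence of an Euler product and of any normalisation of the leading Dirichlet coefficient; and (iii) the hypothesis $\sigma_a\ge 1/2$, with no upper bound on the abscissa of absolute convergence.

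Suppose, for contradiction, that $F(s)=\sum_{n\ge 1}a_nn^{-s}\in\aseld$ with $0<d<1$. Multiplying by the polynomial $P(s)$ of (P2) I may work with the entire function $P(s)F(s)$ of finite order, the finitely many poles of $F$ being harmless. Writing \eqref{fnaleqn} in the asymmetric form $F(s)=\omega Q^{1-2s}\,\overline{G(1-\bar s)}\,G(s)^{-1}\,\overline{F(1-\bar s)}$, Stirling's formula applied to \eqref{gammafactor} gives $\bigl|\overline{G(1-\bar s)}\,G(s)^{-1}\bigr|\asymp |t|^{(1-2\sigma)d/2}$ on the line $\re(s)=\sigma$. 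This is the one place where the numerical value of $d$ enters, and the strict inequality $d<1$ is what ultimately forces the collapse below.

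The key object is the \emph{standard twist}
\[
F(s,\alpha)=\sum_{n\ge 1}\frac{a_n}{n^{s}}\,e\bigl(-\alpha n^{1/d}\bigr),\qquad e(x)=e^{2\pi i x},
\]
studied, for fixed $\alpha>0$, as a function of $s$; the twisting exponent $1/d$ is dictated by the degree and is $>1$ precisely in our range. The steps I would take are: (a) represent $e(-\alpha n^{1/d})$ by a Mellin--Barnes integral against a ratio of gamma functions and substitute the functional equation for $F$, so that the sum over $n$ recombines as $\overline{F}$ near its half-plane of absolute convergence; (b) shift contours and use Stirling together with a stationary-phase estimate to continue $F(s,\alpha)$ to a meromorphic function on $\C$ whose only possible pole, for each $\alpha$ in a suitable range, is simple, located at an explicit point $s_0(\alpha)$, and has residue an explicit nonzero constant times $\overline{a_{m(\alpha)}}$, where $m(\alpha)\in\N$ is determined by $\alpha$; (c) estimate the twisted exponential sum $\sum_{n\le x}a_ne(-\alpha n^{1/d})$ directly by van der Corput's method --- it is here that $1/d>1$ is used --- to show that $F(s,\alpha)$ is in fact (conditionally) convergent, hence holomorphic, in a half-plane containing $s_0(\alpha)$, so that the pole of (b) is spurious and $\overline{a_{m(\alpha)}}=0$; (d) let $\alpha$ range over the admissible set, so that $m(\alpha)$ runs over all sufficiently large integers, and conclude $a_m=0$ for $m$ large. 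Then $F$ is a Dirichlet polynomial; but a Dirichlet polynomial cannot satisfy \eqref{fnaleqn} with a gamma factor $G$ of degree $d\in(0,1)$ --- since $\sum_j\lambda_j\ne\sum_{j'}\lambda'_{j'}$, the trivial zeros forced on $F$ by $G$ would march off to $-\infty$ along the real axis, which is impossible for a Dirichlet polynomial. This contradiction gives $\aseld=\emptyset$.

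The step I expect to be the main obstacle is (b) in the generality of $\asel$: the Mellin--Barnes manipulation and the stationary-phase bounds must be carried out for a genuine quotient $G(s)=\prod_j\Gamma(\lambda_js+\mu_j)\big/\prod_{j'}\Gamma(\lambda'_{j'}s+\mu'_{j'})$ with arbitrary complex $\mu_j,\mu'_{j'}$ --- reorganising the gamma factors with the duplication and reflection formulae --- and all estimates must be made uniform in $\alpha$ and in the data $(Q,\lambda_j,\lambda'_{j'})$; one must also verify that weakening the Selberg-class bound $\sigma_a\le 1$ to the sole hypothesis $\sigma_a\ge 1/2$ still leaves enough room to move the contours in (a) and (c). Once these uniform estimates are in place, the remaining steps follow the pattern of \cite{KaPe99}.
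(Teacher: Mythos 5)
The statement you are asked to prove is not actually proved in this paper: it is imported verbatim from \cite{Ragh20} (Theorem~4.7 there), just as the paper explicitly says in the sentence preceding it. So there is no in-paper argument to compare against. I can only assess your proposal against what the cited sources do and on its own merits.

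Your high-level strategy (show the Dirichlet coefficients eventually vanish, conclude $F$ is a Dirichlet polynomial, and then derive a contradiction from the degree of $G$) is the right shape, and your final step is sound: a Dirichlet polynomial has bounded growth on vertical lines and zeros confined to a vertical strip, while a functional equation with $d>0$ forces $|t|^{d(1/2-\sigma)}$-growth and trivial zeros drifting to $-\infty$, so no Dirichlet polynomial can satisfy \eqref{fnaleqn} with $d\in(0,1)$. However, the mechanism you propose to kill the coefficients is the Kaczorowski--Perelli \emph{nonlinear/standard twist} $F(s,\alpha)=\sum a_n n^{-s}e(-\alpha n^{1/d})$, which is their tool for the far harder range $1<d<2$ (\cite{KaPe03,KaPe11}). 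The $0<d<1$ result in \cite{KaPe99}, which \cite{Ragh20} generalises, is proved by a shorter route using the Mellin--Riemann formula for $\sum_n a_n e^{-nz}$ together with the periodicity $z\mapsto z+2\pi i$ and Stirling; the nonlinear twist is not needed. You have chosen a genuinely different --- and substantially heavier --- weapon than the sources use.

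More seriously, your step (c) contains a gap that I do not see how to close as written. Van der Corput's method bounds exponential sums $\sum e(f(n))$ (or sums with \emph{smooth} weights); it says nothing about $\sum_{n\le x}a_n\,e(-\alpha n^{1/d})$ when the $a_n$ are arbitrary complex numbers with no multiplicativity, sign pattern, or size control beyond finiteness of $\sigma_a$. In the extended Selberg class $\esel$ this step would in fact be \emph{trivial}, because there $\sigma_a\le 1$ and the candidate pole $s_0(\alpha)$ has $\re(s_0)=\tfrac{d+1}{2d}>1$, so $F(s,\alpha)$ converges absolutely at $s_0$ and cannot have a pole; no oscillation estimate is required. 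But the class $\asel$ imposes only $\sigma_a\ge 1/2$, with no upper bound, which is exactly the issue you flag in your opening paragraph under item (iii) and then leave unresolved: if $\sigma_a>\tfrac{d+1}{2d}$, the absolute-convergence argument fails, and van der Corput cannot substitute for it with arbitrary coefficients. Until you either derive an a priori upper bound on $\sigma_a$ from (P2)--(P3) (which is not obvious and is not supplied), or replace (c) by a different mechanism --- for instance the Riemann-formula/periodicity argument of \cite{KaPe99}, adapted to allow $r'>0$ and general $P(s)$ as \cite{Ragh20} must do --- the proposal does not constitute a proof for $\asel$.
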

\begin{theorem}\label{gselpone} The set $\gselp_1$ consists of the functions of the form
$L(s+iA,\chi)$, where $A\in \R$ and $\chi$ is a primitive Dirichlet character.
\end{theorem}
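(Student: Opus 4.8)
The plan is to adapt Kaczorowski and Perelli's classification of the degree-one elements of the Selberg class (\cite{KaPe99}) to the monoid $\gselp$; the one genuinely new feature to handle is that the gamma factor in \eqref{fnaleqn} may now carry $\Gamma$-functions in its denominator. So fix $F\in\gselp_1$, write $F(s)=\sum_{n\ge1}a_nn^{-s}$ (so $a_1=1$ by (P4)), and recall that $d_F=1$ means $\sum_j\lambda_j-\sum_{j'}\lambda'_{j'}=1/2$.

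The first step is to normalise the gamma factor. Using Stirling's formula in the form \eqref{stirlingcons} together with the Legendre duplication formula $\Gamma_{\C}(s)=\Gamma_{\R}(s)\Gamma_{\R}(s+1)$, I would show that $G(s)=D\,B^{s}\,\Gamma_{\R}(s+a)\,R(s)$ for some constants $D,B>0$, some $a\in\C$ with $\re(a)>-1/2$ (this last bound being exactly \eqref{gselbound}), and some rational function $R(s)$ whose zeros and poles all lie in $\re(s)<1/2$; this is an elementary pole-matching argument of precisely the kind already carried out in Lemma \ref{gtwochifin} and the other lemmas of Section \ref{redtogltwo}. Since $F$ has an Euler product it is non-vanishing in some right half-plane, hence so is $\Phi(s)=Q^{s}G(s)F(s)$, and together with (P2) and \eqref{fnaleqn} this forces $R$ to be constant (equivalently, the leftover piece has degree $0$, and $\gselp_0=\{1\}$ by Theorem \ref{gselpzero}). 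After absorbing constants, $F$ satisfies a Hecke-type functional equation $q^{s/2}\Gamma_{\R}(s+a)F(s)=\omega\,q^{(1-s)/2}\Gamma_{\R}(1-s+\bar a)\,\tilde F(1-s)$ for some conductor $q>0$.

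The second step is to pin down the Euler factors, the aim being $F_p(s)=(1-a_pp^{-s})^{-1}$ with $|a_p|\in\{0,1\}$, so that the $a_n$ are completely multiplicative. This is the degree-one rigidity phenomenon: with $d_F=1$, the bound $|b_{p^k}|\le Cp^{k\theta}$ and $\theta<1/2$ from (P4), the abscissa bound (G1), and the Hecke functional equation above together rule out local factors of higher degree — the $\gselp$-analogue of the corresponding argument in \cite{KaPe99}, with Theorems \ref{gselpzero} and \ref{gselpzeroone} available to excise any spurious factor of degree in $[0,1)$. Reading off the functional equation of conductor $q$ then gives $a_p=0$ exactly for $p\mid q$ and $|a_p|=1$ otherwise, and forces $a_n=\chi(n)n^{-iA}$ for a Dirichlet character $\chi$ modulo $q$ and some $A\in\R$ (the reality of $A$ reflecting the fact that the functional equation of $F$ is the reflection $s\leftrightarrow 1-s$). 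Thus $F(s)=L(s+iA,\chi)$; writing $\chi^{\ast}$ for the primitive character inducing $\chi$, matching conductors and gamma factors in the two functional equations (using Theorem \ref{uniquegamma}) forces $\chi=\chi^{\ast}$ to be primitive, so $F(s)=L(s+iA,\chi^{\ast})$. The converse inclusion is immediate.

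The main obstacle I expect is the coupling of the first two steps. Because $\gselp$ permits $\Gamma$-functions in the denominator of the gamma factor, one must first establish that the gamma factor genuinely collapses to a single $\Gamma_{\R}(s+a)$ up to a harmless rational function — ruling out, for instance, an irreducible $\Gamma_{\C}(s+a)/\Gamma_{\R}(s+b)$ — before the rigidity argument bounding the Euler factors can even be set up. Both are a matter of the pole-and-growth bookkeeping the paper has already developed in Section \ref{redtogltwo}, but running it while simultaneously tracking holomorphy, the bound \eqref{gselbound}, and the Euler product is where the real work lies.
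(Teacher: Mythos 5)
The paper does not actually prove this statement; in Section~\ref{secprim} it is recalled verbatim as Theorem~5.1 of \cite{Ragh20} alongside Theorems~4.3, 4.7 of that paper and Theorem~1.1 of \cite{BaRa20}, none of which are re-proved here. So I can only assess your proposal on its own terms, and there I see a genuine gap in Step~1 that also propagates to Step~2.

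Your Step~1 asserts that for a degree-one $G$ one can write $G(s)=D\,B^{s}\,\Gamma_{\R}(s+a)\,R(s)$ with $R$ rational, and that this is ``pole-matching of the kind carried out in Lemma~\ref{gtwochifin}.'' But in the axioms (P3) for $\gselp$ the parameters $\lambda_j,\lambda'_{j'}$ are \emph{arbitrary} positive reals; nothing restricts them to $\{1/2,1\}$. The lemmas of Section~\ref{redtogltwo} succeed only because there $G_2$ is built from \eqref{archlfn}--\eqref{twistedarchlfn}, so every factor is $\Gamma_{\R}$ or $\Gamma_{\C}$ and the poles of each factor fall into arithmetic progressions of step~$2$ (up to translation); pole-matching then cancels them. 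For a general $F\in\gselp_1$, $G$ might a priori look like $\Gamma(0.3s+\mu_1)\Gamma(0.2s+\mu_2)$, whose pole sets interleave in no such pattern. Stirling's formula, as in \eqref{stirlingcons}, only gives an \emph{asymptotic} normal form, and the duplication formula only converts $\Gamma_{\C}$ into two $\Gamma_{\R}$'s; neither produces an exact identity $G(s)=D\,B^s\Gamma_{\R}(s+a)R(s)$ for general $\lambda$'s. The claim that the gamma factor of a degree-one element collapses to a single $\Gamma_{\R}$ (after which Theorem~\ref{uniquegamma} fixes it up to a constant) is precisely an \emph{output} of the classification, not an input you can establish by bookkeeping at the start.

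This matters because your Step~2 (``degree-one rigidity'') leans on the resulting Hecke-type functional equation to force $F_p(s)=(1-a_pp^{-s})^{-1}$, but (P4), (G1), (G4) and a Hecke functional equation do not on their own exclude, e.g., local factors $(1-\alpha p^{-s})^{-1}(1-\beta p^{-s})^{-1}$ with small $\alpha,\beta$. In the Kaczorowski--Perelli strategy that \cite{Ragh20} adapts, the engine is a Fourier-theoretic/hypergeometric analysis of the additive twists $F(s,\alpha)=\sum_n a_n e^{2\pi in\alpha}n^{-s}$ (the same object that reappears in Theorem~\ref{bookermodthm}), which uses the degree-one functional equation to force $(a_n n^{iA})_n$ to be periodic with period the (integral) conductor. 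Once periodicity is in hand, multiplicativity from (P4) together with $\theta<1/2$ yields a Dirichlet character, primitivity drops out by comparing conductors, and only then does one recover that $G$ must be proportional to $\Gamma_{\R}(s+a)$, via Theorem~\ref{uniquegamma}. In short: your two steps are in the wrong order, and the missing idea in both is the periodicity argument.
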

\begin{theorem}\label{gselponetwo}
If $1<d<2$, then $\aseld=\emptyset$.
\end{theorem}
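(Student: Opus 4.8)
The plan is to adapt the method of non-linear (``standard'') twists of Kaczorowski and Perelli to the class $\asel$; Theorem \ref{gselponetwo} is the degree-$(1,2)$ analogue of Theorems \ref{gselpzeroone} and \ref{gselpone}, and should be provable by carrying the same circle of ideas one step further. Suppose, for contradiction, that $F(s)=\sum_{n\ge1}a_n n^{-s}\in\asel$ has degree $d_F=d$ with $1<d<2$; in particular $F\not\equiv 0$, it satisfies (P1)--(P3), and by Theorem \ref{uniquegamma} the degree $d$ and the conductor $\mathfrak q_F$ (built from $Q$ and the $\lambda_j,\lambda'_{j'}$ of \eqref{gammafactor}) are well defined.

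The first task is to assemble, in the setting of $\asel$ --- that is, without an Euler product and allowing $\Gamma$-factors in the denominator of $G(s)$ --- the analytic input on which the Selberg-class arguments rely. Stirling's formula for the ratio $G(s)$ (cf.\ \eqref{stirlingcons}) controls the exponential type of $F$ in $|t|$ on vertical lines, the relevant exponent being governed by $\sum_j\lambda_j-\sum_{j'}\lambda'_{j'}=d/2$; together with (P2) and the Phragm\'en--Lindel\"of principle this gives a convexity bound for $F$ in vertical strips, and a contour shift using \eqref{fnaleqn} then yields polynomial bounds on average for the $a_n$, in particular that $\sigma_a$ is bounded in terms of $d$ and that the twist below converges absolutely for $\re(s)$ large. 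None of this uses (P4), and the denominator $\Gamma$-factors only improve the estimates.

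The second task is to establish, for $\alpha>0$, the analytic properties of the standard twist $T_F(s,\alpha):=\sum_{n\ge1}a_n n^{-s}e^{-2\pi i\alpha n^{1/d}}$. By a contour-integration argument analogous to the one used for the standard twist in the Selberg class --- apply the functional equation \eqref{fnaleqn} and evaluate the resulting Bessel-type kernel by the saddle-point method, using the asymptotics of the inverse Mellin transform of $G(s)$ --- one should obtain that $T_F(s,\alpha)$ continues meromorphically to $\C$, is holomorphic for $\alpha$ outside a discrete set $\mathrm{Sp}(F)\subset\R_{>0}$ determined by $\mathfrak q_F$ and the coefficients, has at most simple poles on $\mathrm{Sp}(F)$ with residues explicit in the $a_n$ and in the dual coefficients of $F$, and satisfies a transformation formula expressing $T_F(s,\alpha)$ through a non-linear twist of the dual data. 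The only place $r'>0$ intervenes is the asymptotic expansion of that kernel, which I would isolate as a separate Stirling-for-ratios lemma; the conclusion is insensitive to the denominators because $d/2=\sum_j\lambda_j-\sum_{j'}\lambda'_{j'}$ still fixes the order of the stationary point.

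The decisive --- and hardest --- step is then the bootstrap argument of Kaczorowski and Perelli for degree $1<d<2$. The hypothesis $1<d<2$ places the critical exponent $1/d$ in the interval $(1/2,1)$, which is precisely what allows the transformation formula to be iterated (equivalently, the family of twists with exponents in $(1/2,1/d]$ to be played off against one another); carrying this out forces rigid arithmetic relations among $d$, $\mathfrak q_F$ and $\mathrm{Sp}(F)$ --- notably $\sqrt{\mathfrak q_F}$ is confined to a discrete set, while the polar structure of $T_F(s,\alpha)$ becomes incompatible with every $d\in(1,2)$ unless all the $a_n$ vanish, i.e.\ $F\equiv 0$, contradicting $F\in\asel$. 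Reproducing this bootstrap --- or, better, extracting a black-box version of it that uses only the functional equation and the coefficient bounds of the first task, and then checking its robustness both to the $\Gamma$-denominators and to the weaker hypothesis $\sigma_a\ge 1/2$ --- is where essentially all the difficulty lies. A tempting shortcut, clearing the denominator $\Gamma$-factors by multiplying $F$ by an auxiliary element of $\asel$ and quoting the known result for $\sel$ or $\esel$, does not appear to go through cleanly: the product would in general have degree $\ge 2$ and would lie outside those smaller classes.
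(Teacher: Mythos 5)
This theorem is not proved in the paper: it is quoted verbatim as Theorem~1.1 of \cite{BaRa20}, so there is no in-paper proof to compare your attempt against; the best I can do is assess your sketch as a plausible reconstruction of that reference. Your outline is in the right spirit --- extending the Kaczorowski--Perelli nonlinear-twist machinery from $\esel$ to $\asel$ is the natural route, and the two new obstacles you single out (denominator $\Gamma$-factors in $G(s)$; no upper bound on $\sigma_a$ and no analogue of $\esel$'s constraint $\re(-\mu_j)\le 0$ on the archimedean parameters) are the right ones to worry about.

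But, as you say yourself, what you have written is a plan rather than a proof, and the gap is genuine. The analytic properties of $T_F(s,\alpha)$ --- continuation, the spectrum $\mathrm{Sp}(F)$, simplicity and residues of poles, the transformation formula --- are asserted with ``one should obtain'' rather than derived; and they are not free, since in $\asel$ the numerator $\Gamma$-factors of $G$ may have poles with $\re(s)>0$, so $\Phi(s)$ can acquire poles beyond those of $F$, which changes the bookkeeping in the contour shift that underlies the whole nonlinear-twist computation. The decisive bootstrap --- the step that forces $\sqrt{\mathfrak{q}_F}$ into a discrete set and makes the polar structure of $T_F$ incompatible with every $d\in(1,2)$ --- you describe in one sentence and then explicitly defer: ``reproducing this bootstrap \ldots is where essentially all the difficulty lies.'' That sentence correctly locates the problem, and it also means the theorem is not established by what precedes it. To turn the sketch into a proof you would need to state and prove the Stirling-for-ratios and polar-structure lemmas you allude to, verify the twist transformation formula in the $\asel$ setting, and then actually carry the bootstrap through to the contradiction.
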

An easy consequence of Theorems \ref{gselpzero} and \ref{gselpzeroone} is Corollary 6.2 of
\cite{Ragh20}:
\begin{theorem}\label{factorisation} Every element of $\gselp$ can be factored in primitive elements.
\end{theorem}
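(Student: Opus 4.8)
The plan is to prove this by a maximal-factorisation argument resting on two facts: the degree $d_F$ is additive under multiplication, and no non-trivial element of $\gselp$ has small positive degree. Granting these, every admissible factorisation of a fixed $F$ into non-units has a bounded number of factors, and a factorisation realising that bound must consist of primitive elements.

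First I would record the additivity of the degree. If $F=F_1F_2$ with $F_1,F_2\in\gselp$, choose gamma factors $G_1,G_2$ of the form \eqref{gammafactor} attached to $F_1,F_2$ via (P3). Then $G_1G_2$ is again of the form \eqref{gammafactor} and serves as a gamma factor for $F$ (with $Q=Q_1Q_2$, $\omega=\omega_1\omega_2$); comparing the defining expression $d_F=2\bigl(\sum_j\lambda_j-\sum_{j'}\lambda'_{j'}\bigr)$ term by term gives $d_F=d_{F_1}+d_{F_2}$. By the first assertion of Theorem \ref{uniquegamma} this number does not depend on the chosen gamma factor, so the identity is well posed. (That $\gselp$ is a monoid is already known; preservation of (G1), (G3), (G4), (P2) under products is routine, with $\sigma_a(F)\le\max(\sigma_a(F_1),\sigma_a(F_2))$, the archimedean parameters of $G_1G_2$ being those of $G_1$ and $G_2$, and $b_{p^k}(F)=b_{p^k}(F_1)+b_{p^k}(F_2)$.)

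Next I would isolate the key lower bound. By Theorem \ref{gselpzero} the only element of $\gselp$ of degree $0$ is $1$, and by Theorem \ref{gselpzeroone} there is no element of $\asel$, hence none of $\gselp$, with $0<d<1$; together with the non-negativity of the degree for elements of $\gselp$ this says that every $F\in\gselp$ with $F\ne 1$ satisfies $d_F\ge 1$. Now let $F\in\gselp$; if $F=1$ it is the empty product of primitive elements, so assume $F\ne 1$. Call a factorisation $F=G_1\cdots G_k$ \emph{admissible} if each $G_i\in\gselp\setminus\{1\}$. Admissible factorisations exist ($F$ itself, with $k=1$), and by the previous two paragraphs any admissible factorisation obeys $k\le\sum_{i=1}^k d_{G_i}=d_F$, so $k$ is bounded above. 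Pick an admissible factorisation with $k$ maximal: if some $G_i$ failed to be primitive we could write $G_i=H_0H_1$ with $H_0,H_1\in\gselp\setminus\{1\}$ and splice this in, obtaining an admissible factorisation with $k+1$ factors — a contradiction. Hence each $G_i$ is primitive and $F$ is a product of primitive elements.

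There is no serious obstacle here, which is why the author calls it an easy consequence: the entire weight of the argument is the uniform bound $d_F\ge 1$ for non-units, which is exactly the content of Theorems \ref{gselpzero} and \ref{gselpzeroone}. The one point that deserves a word of care is that the degree is not a priori an integer, so I would phrase the termination via a maximal admissible factorisation (bounded in size by $d_F$) rather than by induction on $d_F$ itself; I would also make explicit the non-negativity of degrees in $\gselp$, which is part of the same small-degree circle of results.
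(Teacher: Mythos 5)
Your proof is correct and takes exactly the route the paper intends: the paper explicitly presents Theorem \ref{factorisation} as "an easy consequence of Theorems \ref{gselpzero} and \ref{gselpzeroone}," and your argument — additivity of degree via multiplying gamma factors together with the well-definedness from Theorem \ref{uniquegamma}, the lower bound $d_F\ge 1$ for non-units from those two classification theorems, and then a maximal admissible factorisation bounded in length by $d_F$ — is precisely that consequence spelled out. Your flagged caveat about non-negativity of degrees is the right thing to note; it is part of the same degree classification circle in \cite{Ragh20} and poses no real obstruction.
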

\begin{proof}[Proof of Theorem \ref{primthm}]
The theorems above allow us to reduce the proof of Theorem \ref{primthm} to
Corollary \ref{generaln}. Indeed, suppose $\pi\in \autthree$. Then $L(s,\pi)\in \gselp_3$. By Theorem
\ref{factorisation}, we know that $L(s,\pi)$ must factor into primitive elements. By Theorems \ref{gselpzero} and  \ref{gselpzeroone}, the degree of the smallest non-trivial factor must be at least $1$. 
Thus, there can be at most three non-trivial factors. There can be no factor of degree between $1$ and $2$ by Theorem \ref{gselponetwo}, so we see that the only possible factorisations are
\[
L(s,\pi)=f_1(s)f_2(s)f_3(s),
\]
where $f_j(s)\in \gselp_1$ for $1\le j\le 3$, or
\[
 L(s,\pi)=f_1(s)f_2(s),
\]
with $f_1(s)\in \gselp_1$ and $f_2(s)\in \gselp_2$. 
In either case, Theorem \ref{gselpone} asserts that $f_1(s)=L(s+iA_1,\chi_1)$ for $A_1\in \R$, and a primitive Dirichlet character $\chi_1$. We have 
\begin{equation}\label{threefactors}
L(s,\pi)=L(s+iA_1,\chi_1)F_2(s),
\end{equation}
where $F_2(s)\in \gselp_2$ with $F_2=f_2f_3$ in the first case above, or $F_2=f_2$ in the second case. If $\rho=\chi_1\norm^{-iA}\in \autone$, $L(s,\pi)=L(s,\rho)F_2(s)$, contradicting Corollary
\ref{generaln}.
\end{proof}

\begin{remark} We require only that $\gselp_d$ be empty for $1<d\le 3/2$ for our argument to go through. 
\end{remark}
\begin{remark} If one restricts oneself to $\sel$, the analogues of Theorems \ref{gselpzero}, \ref{gselpzeroone} and \ref{gselpone} for $\esel$ can be found in \cite{KaPe99}. In \cite{KaPe03}, the authors prove that $\esel_d=\emptyset$ for $1<d<5/3$, which would be adequate if one wanted to prove Theorem \ref{primthm} for the class $\sel$. The analogue of Theorem \ref{gselponetwo} for 
$\esel$ was proved in \cite{KaPe11}.
\end{remark}
\begin{remark} Motivated by the problem of establishing the primitivity of $L(s,\pi)$ for $\pi\in \autthree$, I was able to prove a version of Theorem \ref{primthm} several years ago for the case $n=3$ and the class $\sel$. But the result did not appear to be general enough, since most cuspidal $L$-functions of $\glthree$ are not known to belong to $\sel$. This was my iniitial motivation for the introduction of the classes $\asel$ and $\gsel$.
\end{remark}
\begin{remark} One could work with a notion of ``almost primitive'' elements, defined as the product of a primtive element of degree greater than zero and an element of degree $0$, and formulate our results of the class $\asel$. The class $\gselp$ however, allows the cleanest formulation of primitivity results. 
\end{remark}

\section{Comparing the zero sets of $L$-functions}\label{zerosets}

Theorem \ref{zerothm} asserts that there are infinitely many zeros
(counted with multiplicty) of $L(s,\rho)$ 
which are not zeros (counted with multiplicty) of $L(s,\pi)$. More precisely, let $m_{\theta}$
denote the multiplicity of a zero $\theta$ of $L(s,\sigma)$, where 
$\sigma\in \temprep$. We let 
\[
S_{\sigma}=\{(\theta,m_{\theta})\,\vert\, L(\theta,\sigma)=0\}.
\]
Then the theorem asserts that $\vert S_{\rho}\setminus S_{\pi}\vert=\infty$.

When $G_2$ has finitely many zeros, the theorem tells us that there are infinitely many non-trivial zeros of $L(s,\rho)$ which are not zeros of $L(s,\pi)$ in the critical strip. Let 
$S_{c,\sigma}=\{s\in S_{\sigma}\,\vert\,0<\re(s)<1\}$. We record this reformulation separately as
\begin{corollary}\label{zerosincs} Let $\pi\in \autn$ and let $\rho\in \temprep_{n-2}$. If $G_2(s)=L(s,\pi_{\infty})/L(s,\rho_{\infty})$ has at most a finite number of zeros, then 
$|(S_{c,\rho}\setminus S_{c,\pi})|=\infty$.
\end{corollary}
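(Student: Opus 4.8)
The plan is to read the corollary off from the second assertion of Theorem~\ref{zerothm}; the only genuine work is to translate a statement about poles of $F_2$ into one about the zero multisets $S_{c,\rho}$ and $S_{c,\pi}$. First I would recall that since $\pi\in\autn$, Theorem~\ref{sigmaponeptwo} tells us $L(s,\pi)$ is entire, and that since $\rho\in\temprep_{n-2}$ we have $L(s,\rho)=\prod_{j=1}^{k}L(s,\rho_j)$ with $\rho_j\in\aut$, whose only possible poles lie on the lines $\re(s)=0$ and $\re(s)=1$ (coming from $\glone$-summands such as $\zeta(s)$); hence $L(s,\rho)$ is holomorphic in the open strip $0<\re(s)<1$. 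Consequently, for $\theta$ with $0<\re(\theta)<1$, the quotient $F_2(s)=L(s,\pi)/L(s,\rho)$ has a pole at $\theta$ exactly when the order $m_\theta$ of vanishing of $L(s,\rho)$ at $\theta$ strictly exceeds the order $m_\theta'\ge 0$ of vanishing of $L(s,\pi)$ at $\theta$.

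Next I would observe that each such pole $\theta$ yields an element of $S_{c,\rho}\setminus S_{c,\pi}$: by definition $(\theta,m_\theta)\in S_{c,\rho}$, while $(\theta,m_\theta)\notin S_{c,\pi}$ because either $m_\theta'=0$, so $\theta$ is not a zero of $L(s,\pi)$ at all, or $0<m_\theta'<m_\theta$, in which case the unique pair of $S_{c,\pi}$ recording $\theta$ is $(\theta,m_\theta')\neq(\theta,m_\theta)$. Since the assignment $\theta\mapsto(\theta,m_\theta)$ is injective, the cardinality of $S_{c,\rho}\setminus S_{c,\pi}$ is at least the number of poles of $F_2$ in $0<\re(s)<1$.

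Finally, the hypothesis that $G_2(s)=L(s,\pi_\infty)/L(s,\rho_\infty)$ has at most finitely many zeros is exactly the hypothesis of the second part of Theorem~\ref{zerothm}, which then guarantees that $F_2$ has infinitely many poles in the open critical strip. Combining this with the previous paragraph gives $|(S_{c,\rho}\setminus S_{c,\pi})|=\infty$, as claimed. I do not expect any real obstacle here: all the substance is already packaged in Theorem~\ref{zerothm}, and the only point requiring a little care is the multiplicity bookkeeping built into the definition of the sets $S_\sigma$ together with the remark that $L(s,\rho)$ has no poles strictly inside the critical strip.
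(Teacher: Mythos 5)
Your proof is correct, but it divides the labour differently from the paper's own short argument. The paper's proof of Corollary~\ref{zerosincs} does not invoke the second assertion of Theorem~\ref{zerothm} as a black box; instead it re-derives it from the first assertion: it observes that any pole of $F_2$ outside the critical strip must be a zero of $G_2$ (because $\Phi_2(s)=G_2(s)F_2(s)$ is holomorphic for $\re(s)\le 0$, and $F_2$ is holomorphic for $\re(s)>1$ by (G1)), and since $G_2$ has only finitely many zeros by hypothesis, the infinitely many poles provided by the first part of Theorem~\ref{zerothm} must accumulate inside $0<\re(s)<1$. The paper then leaves the passage from ``infinitely many poles of $F_2$ in the strip'' to ``$|S_{c,\rho}\setminus S_{c,\pi}|=\infty$'' implicit. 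You do the opposite: you take the strip statement from Theorem~\ref{zerothm} as given and carry out the multiplicity bookkeeping explicitly, noting that $L(s,\pi)$ is entire (Theorem~\ref{sigmaponeptwo}) and that $L(s,\rho)=\prod_j L(s,\rho_j)$ has no poles in the open strip, so a pole of $F_2$ at $\theta$ forces $\mathrm{ord}_{\theta}L(s,\rho)>\mathrm{ord}_{\theta}L(s,\pi)\ge 0$, whence $(\theta,m_\theta)\in S_{c,\rho}\setminus S_{c,\pi}$. Both routes are sound; yours is more explicit on the translation to zero multisets, while the paper's is more self-contained in that it actually supplies the argument behind the second clause of Theorem~\ref{zerothm} (which, in the proof of that theorem, is otherwise only implicitly reduced to the first clause). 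It would be worth being aware that if one reads the paper's proof of Theorem~\ref{zerothm} strictly, the ``in $0<\re(s)<1$'' refinement is really established here in the corollary, so citing it as an input risks a mild circularity unless you also record the $\Phi_2(s)$-holomorphy observation.
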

\begin{proof} We know that the poles of $F_2$ outside the critical strip are necessarily zeros of $G_2$, since $\Phi_2(s)$ is holomorphic when $\re(s)\le0$. Since there are at most finitely many of these, we see by Theorem \ref{zerothm} that there are infinitely many poles of $F_2$ in the critical strip.
\end{proof}
In \cite{Ragh99} and \cite{Ragh10}, we treated the special case of the corollary above when $G_2(s)=L(s,\tau_{\infty})$ for some unitary representation $\tau_{\infty}$ of $\gltwor$, that is, when $R(s)=1$, and when $F_2(s)$ had conductor $1$ (or, more precisely, when $\pi$ and $\rho$ had the same conductor).

Special cases of $\pi\in \autn$ and $\rho\in \temprep_{n-1},\temprep_n$
were treated in \cite{Ragh99}, and then considerably generalised in
\cite{Booker2015} and still further generalised in Theorem 7.1 of \cite{Ragh20}. This last theorem has no restrictions on the archimedean factors or even on the sizes of the Satake parameters, and is thus applicable to the tensor product, exterior square and symmetric square $L$-functions. We refer to the introduction of \cite{Ragh99} for the previous history of comparing zero sets of $L$-functions.

We give one class of examples to illustrate the generality of Corollary \ref{zerosincs} and how it improves on the existing results.

\begin{example} Let $\tau$ be a unitary cuspidal automorphic representation of $\gltwo$. Let 
$\pi={\sym}^n\tau$, $n\ge 0$, and $\rho=\sym^{n-2}\tau$, $n\ge 2$, the automorphic symmetric power lifts of 
$\tau$ (if they exist). These examples are particularly interesting for the following reason. When $\tau_p$ is unramified
 let $\alpha_p$ and $\alpha_p^{-1}$ be its Satake parameters. In this case the local Euler factor of $F_2(s)$ at $p$ has the form
\[
F_{2,p}(s)=(1-\alpha_p^{n+1}p^{-s})^{-1}(1-\alpha_p^{-n-1}p^{-s})^{-1}.
\]
Thus, we may view the local Euler factor as the local $L$-function of an admissible representation of $\gltwoqp$. 

When $\tau_{\infty}$ is a principal series representation (this case corresponds to representations associated to Maass cusp forms), the symmetric power lifts ${\sym}^n\tau\in \aut_{n+1}$ are known to exist for $n\le 4$ (\cite{GeJa76}, \cite{KiSh02a}, \cite{Kim03}). In this case, one checks easily that $G_2(s)$ has the form
\[
\Gamma\left(\frac{s+\varepsilon +ni\nu}{2}\right)\Gamma\left(\frac{s+\varepsilon -ni\nu}{2}\right)
\]
which clearly has no zeros. Kim and Shahidi give precise criteria for the cuspidality of $\pi$ in \cite{KiSh02a}. In this case Corollary \ref{zerosincs} can be stated as
\begin{corollary}  Let $\tau\in \auttwo$ such that $\tau_{\infty}$ is a
 principal series representation and assume that $\pi={\sym}^n\tau\in \autn$ for $2\le n\le 4$, and that $\rho={\sym}^{n-2}\tau$. Then, $|(S_{c,\rho}\setminus S_{c,\pi})|=\infty$.
\end{corollary}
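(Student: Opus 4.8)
The plan is to obtain this corollary as a direct application of Corollary~\ref{zerosincs}. Since $\sym^n\tau$ has degree $n+1$ and $\sym^{n-2}\tau$ has degree $n-1$, the pair $(\pi,\rho)=(\sym^n\tau,\sym^{n-2}\tau)$ is exactly of the type considered there — a cuspidal representation together with an isobaric one whose degree is two smaller — so the whole task is to check the two hypotheses of that corollary: that $\pi$ lies in the cuspidal spectrum, that $\rho\in\temprep$, and that $G_2(s)=L(s,\pi_{\infty})/L(s,\rho_{\infty})$ has at most finitely many zeros.

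First I would recall the automorphy input. Because $\tau_{\infty}$ is a principal series representation, the symmetric power lifts $\sym^m\tau$ are known to be automorphic for all $m\le 4$ by Gelbart--Jacquet \cite{GeJa76}, Kim--Shahidi \cite{KiSh02a} and Kim \cite{Kim03}; the hypothesis $\pi=\sym^n\tau\in\autn$ is precisely the assertion that this lift is \emph{cuspidal} for the given $n$, $2\le n\le 4$, which is governed by the explicit criteria of Kim and Shahidi in \cite{KiSh02a}. For $\rho=\sym^{n-2}\tau$ we have $n-2\in\{0,1,2\}$: when $n=2$, $\rho$ is the trivial Hecke character and $L(s,\rho)=\zeta(s)$; when $n=3$, $\rho=\tau\in\auttwo$; and when $n=4$, $\rho=\sym^2\tau$, which must be cuspidal since the cuspidality of $\sym^4\tau$ already forces $\tau$ to be non-dihedral. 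In all three cases $\rho\in\temprep$, so the automorphy and isobaricity requirements of Corollary~\ref{zerosincs} are met.

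Next I would verify that $G_2$ has no zeros, reprising the computation made in the Example above. Writing $\tau_{\infty}$ as the unitary principal series representation attached to a pair of characters $(\sgn^{\varepsilon}|\cdot|^{i\nu},\sgn^{\varepsilon}|\cdot|^{-i\nu})$ with $\nu\in\R$, the archimedean parameters of $\sym^m\tau_{\infty}$ are $\{(m-2j)\,i\nu:0\le j\le m\}$, each contributing a $\Gamma_{\R}$-factor carrying the sign character $\sgn^{m\varepsilon}$. Since $n$ and $n-2$ have the same parity, every $\Gamma_{\R}$-factor of $L(s,\sym^{n-2}\tau_{\infty})$ matches one of $L(s,\sym^n\tau_{\infty})$ exactly, and in the quotient only the two extreme factors ($j=0$ and $j=n$) survive, giving
\[
G_2(s)=\Gamma_{\R}(s+\varepsilon'+ni\nu)\,\Gamma_{\R}(s+\varepsilon'-ni\nu),\qquad \varepsilon'\equiv n\varepsilon\pmod{2}.
\]
Being a product of Gamma functions, $G_2$ is nowhere zero; in particular it has at most finitely many zeros, and the last hypothesis of Corollary~\ref{zerosincs} holds. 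That corollary then yields $|S_{c,\rho}\setminus S_{c,\pi}|=\infty$, which is the claim.

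The argument has no serious obstacle; the only point requiring care is the archimedean bookkeeping in the last paragraph — confirming that each $\Gamma_{\R}$-factor of $L(s,\rho_{\infty})$ genuinely \emph{cancels} against one of $L(s,\pi_{\infty})$ rather than merely sharing infinitely many poles with it, and keeping track of the sign characters. Because all the parameters here are explicit and purely imaginary, and there is no rational-function ambiguity, this is exactly the elementary type of analysis already carried out in the proofs of Lemma~\ref{gtwochifin} and Lemma~\ref{correctgamma}, and it causes no difficulty.
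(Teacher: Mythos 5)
Your proposal is correct and follows exactly the approach the paper takes: verify that $G_2$ is a product of two $\Gamma_{\R}$-factors (hence nowhere zero), and invoke Corollary~\ref{zerosincs}. The paper dispatches the computation with ``one checks easily that $G_2(s)$ has the form $\Gamma\left(\frac{s+\varepsilon+ni\nu}{2}\right)\Gamma\left(\frac{s+\varepsilon-ni\nu}{2}\right)$''; you supply the bookkeeping (the parameters $(m-2j)i\nu$ of $\sym^m\tau_{\infty}$, the uniform sign character $\sgn^{m\varepsilon}$, the cancellation leaving only $j=0,n$) that the paper treats as routine, and you correctly note that $\rho=\sym^{n-2}\tau$ lands in $\temprep$ in each of the three cases $n=2,3,4$. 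One small point worth flagging for precision (and it applies equally to the paper's terse version): the claim that every $\Gamma_{\R}$-factor of $\sym^m\tau_{\infty}$ carries the same sign $\sgn^{m\varepsilon}$ relies on writing $\tau_{\infty}$ as induced from a pair with \emph{equal} parities $(\sgn^{\varepsilon}|\cdot|^{i\nu},\sgn^{\varepsilon}|\cdot|^{-i\nu})$; this holds for the Maass-form situation the paper has in view (trivial central character forces $\varepsilon_1=\varepsilon_2$), but a fully general unitary principal series could have $\varepsilon_1\neq\varepsilon_2$, in which case the archimedean parities alternate with $j$ and the cancellation pattern has to be re-examined. Since the corollary is stated in the context of the Maass-form example, this is not a gap in your argument, but it is the one place where ``the parameters are explicit and purely imaginary'' is not the whole story.
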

When $F_2$ has conductor 1, this was treated in \cite{Ragh10} and when $n=2$, this is the work of \cite{NeOl2020}.

When $\tau_{\infty}$ is a (limit of) discrete series representation and $\tau$ is not CM, the $n$-th symmetric power lifts are known to exist and be cuspidal for all $n\in \N$ by the work of Newton and Thorne \cite{NeTh21}. As observed in Remark \ref{ramanujanrmk}, these symmetric power $L$-functions satisfy the Generalised Ramanujan Conjecture at all places.  
In this case, we can check that $G_2(s)$ has the form $P(s)L(s,\tau_{\infty})$ for some polynomial $P(s)\not\equiv 1$ if $n\ge 4$, so $G_2(s)$ has at most finitely many zeros. 
We record this special case of Corollary \ref{zerosincs} as
\begin{corollary} \label{sympowers} Let $\tau\in \auttwo$ such that $\tau_{\infty}$ is a (limit of) discrete series representation and $\tau$ is not CM, and let $\pi={\sym}^n\tau$ and $\rho={\sym}^{n-2}\tau$. Then for $n\ge 3$, $\vert S_{c,\rho}\setminus S_{c,\pi}\vert=\infty$.
\end{corollary}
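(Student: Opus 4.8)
The plan is to deduce the statement directly from Corollary \ref{zerosincs}: once we know that $\pi=\sym^n\tau$ and $\rho=\sym^{n-2}\tau$ are honest (unitary, cuspidal) automorphic representations of the right ranks, and that the archimedean quotient $G_2(s)=L(s,\pi_\infty)/L(s,\rho_\infty)$ has only finitely many zeros, the corollary applies verbatim.

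For the first point I would invoke \cite{NeTh21}: since $\tau$ is not CM and $\tau_\infty$ is a (limit of) discrete series, symmetric power functoriality provides a cuspidal automorphic representation $\sym^m\tau$ of ${\rm GL}_{m+1}(\aq)$ for every $m\ge 0$. Hence $\pi=\sym^n\tau\in {\mathcal A}_{n+1}^{\circ}$ and, for $n\ge 3$ (so that $n-2\ge 1$), $\rho=\sym^{n-2}\tau\in {\mathcal A}_{n-1}^{\circ}\subseteq \temprep_{n-1}$. The difference of degrees is $(n+1)-(n-1)=2$, so $(\pi,\rho)$ is exactly a pair to which Theorem \ref{zerothm} and Corollary \ref{zerosincs} apply (with the role of ``$n$'' played by $n+1$). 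Moreover $\pi$ and $\rho$ automatically satisfy the analytic hypotheses needed, by Theorems \ref{lautoing}, \ref{sigmaponeptwo} and \ref{nonvanishthm}.

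The remaining work is the archimedean computation. Write $\phi_\infty=\mathrm{Ind}_{W_{\C}}^{W_{\R}}(\chi)$ for the two-dimensional Langlands parameter of $\tau_\infty$, where $\chi$ is a character of $W_{\C}=\C^\times$ with $\chi^{\sigma}=\chi^{-1}$. Restricting to $W_{\C}$ one has $\sym^n\phi_\infty|_{W_{\C}}=\sym^{n-2}\phi_\infty|_{W_{\C}}\oplus(\chi^{n}\oplus\chi^{-n})$, so by the multiplicativity of archimedean $L$-factors $L(s,\rho_\infty)$ is obtained from $L(s,\pi_\infty)$ by deleting a single factor of the form $\Gamma_{\C}(s+c_n)$, $c_n=n(k-1)/2\ge 0$ (with $k$ the weight of $\tau_\infty$), coming from the constituent $\mathrm{Ind}(\chi^{n})$, and possibly altering the shift of one $\Gamma_{\R}$-factor by $1\pmod 2$ when $n$ is even (whether or not this alteration occurs can be pinned down via the Clebsch--Gordan identity $\phi_\infty\otimes\sym^{n-1}\phi_\infty\cong \sym^n\phi_\infty\oplus(\det\phi_\infty)\otimes\sym^{n-2}\phi_\infty$ together with $\det\phi_\infty=\mathrm{sgn}^{k}$, but this is not needed). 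The poles of the deleted factor $\Gamma_{\C}(s+c_n)$ form a full arithmetic progression of step $1$ tending to $-\infty$, so they absorb, with multiplicity, all but finitely many poles of $L(s,\rho_\infty)$; consequently $G_2$ has at most finitely many zeros — none at all when $n$ is odd, where no $\Gamma_{\R}$-discrepancy arises. Feeding this into Corollary \ref{zerosincs} yields $|S_{c,\rho}\setminus S_{c,\pi}|=\infty$ for every $n\ge 3$.

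I do not anticipate any real obstacle: this corollary is essentially a worked example of the main theorem. The one place that rewards a little care is the even-$n$ case, where a genuine ratio $\Gamma_{\R}(s+1)/\Gamma_{\R}(s)$ (or its reciprocal) survives in $G_2$; taken alone this ratio has infinitely many zeros, and the point is precisely that the poles of the surviving $\Gamma_{\C}$-factor of $\sym^n\tau_\infty$ cancel all but finitely many of them. The robust way to argue this is the arithmetic-progression observation above, which sidesteps any case analysis on the parities of the $\varepsilon$-parameters.
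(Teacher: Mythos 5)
Your proposal is correct and follows essentially the same route as the paper: invoke Newton--Thorne for cuspidality of all symmetric power lifts of a non-CM $\tau$, verify that $G_2$ has only finitely many zeros, and then apply Corollary \ref{zerosincs}. The only difference is cosmetic: where the paper asserts that $G_2(s)=P(s)L(s,\tau_\infty)$ for a polynomial $P$, you instead argue directly that the extra factor $\Gamma_{\C}(s+n(k-1)/2)$ coming from the top piece $\mathrm{Ind}(\chi^{n})$ has poles along a step-one arithmetic progression of integers, so it cancels all but finitely many of the potential zeros produced by a surviving $\Gamma_{\R}$-ratio in the even-$n$ case; this is arguably a cleaner and more robust way to reach the same conclusion.
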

Note that for the corollary above we really do need to allow for a polynomial factor to appear in the expression for $G_2(s)$.
To summarise, for every $n\ge 3$, the Euler product of $F_2(s)$ in the case above has the same form as the Euler product of a a global admissible representation of $\gltwo$, so $F_2(s)$ is the (partial) $L$-function of an admissible representation of $\gltwo$. It also satisfies (G1), (G3) and (G4) but has infinitely many poles (in the critical strip) and thus does not satisfy (G2). 
\end{example}

\begin{remark} The case $n=2$ in the corollary above requires a somewhat different argument which we will give in Section \ref{gtwoinfty}.
\end{remark}

\begin{remark} In \cite{Ragh99} we had stated the result above for the pair $\pi={{\rm Sym}}^4(\tau_{\Delta})$ and $\rho={{\rm Sym}}^2(\tau_{\Delta})$ where $\tau_{\Delta}$ is the cuspidal automorphic representation attached to $\Delta$, but the proof was incomplete. Corollary \ref{zerosincs} allows us to correct this mistake.
\end{remark}

\section{Relaxing the condition of automorphy}\label{relaxaut}

In this section we will give examples of pairs of $L$-functions $L(s,\pi)$ and $L(s,\rho)$ which have not been shown to be automorphic but for which we can nonetheless prove that $F_2(s)=L(s,\pi)/L(s,\rho)$ has infinitely many poles (in the critical strip).
Theorem \ref{mostgeneral} is applicable in a very wide variety of situations and will allows us to conclude that outside of a finite number of primes, $F_2$ coincides with the $L$-function of an automorphic representation.
It is possiblle to give a large number of pairs of $L$-functions for which we can extend the results of Theorem \ref{zerothm}. We content ourselves with two sets of examples, the first involving Artin $L$-functions, and the second involving the tensor product $L$-functions.

\begin{corollary}\label{artin} Let $\pi$ be an irreducible three dimensional Artin representation and let $\rho$ be a (unitary twist of) a Dirichlet character, and suppose that $G_2$ has only finitely many zeros. Then 
$\vert S_{c,\rho}\setminus S_{c,\pi}\vert=\infty$.
\end{corollary}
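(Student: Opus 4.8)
The plan is to argue by contradiction, paralleling the proofs of Corollary~\ref{zerosincs} and Theorem~\ref{mostgeneral}. Suppose $|S_{c,\rho}\setminus S_{c,\pi}|<\infty$. Exactly as in the proof of Corollary~\ref{zerosincs}, any pole of $F_2$ outside the open critical strip is forced to be a zero of $G_2$ (the functional equation makes $\Phi_2$ holomorphic for $\re(s)\le 0$, while $F_2$ is holomorphic on $\re(s)=1$ since there $L(s,\pi)$ is holomorphic and $L(s,\rho)$ has no zeros), and $G_2$ has only finitely many zeros by hypothesis; combined with our assumption this shows $F_2$ has at most finitely many poles in $\C$.

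The next step is to verify that $(\pi,\rho)$ meets the hypotheses of Theorem~\ref{mostgeneral}. Using the local Langlands correspondence for $\mathrm{GL}_3$ one views $\pi$ as an irreducible admissible representation $\otimes_v^{\prime}\pi_v$ of $\glthree$ whose central character is, via class field theory, the Dirichlet character $\det\pi$ (hence automorphic) and whose archimedean component is unitary since $\pi$ has finite image. Brauer's induction theorem supplies the meromorphic continuation of every twist $L(s,\pi\otimes\chi)$ and a functional equation of the form \eqref{autlfneqn}; for $\rho$ these are classical, and the analogue for Artin representations of the Jacquet--Shalika stability of $\gamma$-factors used in Section~\ref{secepsilon} follows from local Langlands. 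Checking (G1), (P2), (G3), (G4) is routine --- $L(s,\pi)$ and $L(s,\rho)$ have Euler parameters on the unit circle and archimedean parameters of non-positive real part, $F_2$ has finitely many poles by the previous step, and $F_2$ has finite order by Phragm\'en--Lindel\"of --- so $F_2\in\gselp$; and (G5) holds because for non-principal $\chi$ the function $F_2(s,\chi)$ is the quotient of a twisted irreducible three-dimensional Artin $L$-function, holomorphic on $\re(s)=1$, by a twisted Hecke $L$-function, non-vanishing there. Theorem~\ref{mostgeneral} then produces $\tau\in\auttwoall$ and a finite set of finite places $S$ with
\[
L_S(s,\pi)=L_S(s,\rho)\,L_S(s,\tau).
\]

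To finish, I would enlarge $S$ to a finite set $S'$ containing the primes ramified in $\rho$ and twist this identity of Euler products by $\bar\rho$, getting
\[
L_{S'}(s,\pi\otimes\bar\rho)=\zeta_{S'}(s)\,L_{S'}(s,\tau\otimes\bar\rho),
\]
since $\rho\bar\rho$ is the trivial id\`ele class character. Now compare the order of the pole at $s=1$. On the left, $\pi\otimes\bar\rho$ is a unitary twist of an irreducible three-dimensional --- hence non-trivial --- Artin representation, so $L(s,\pi\otimes\bar\rho)$ is holomorphic (indeed non-zero) at $s=1$ by Brauer's theorem, and the finitely many bad Euler factors, being reciprocals of polynomials in $p^{-s}$ with unit-circle roots, are non-zero at $s=1$ as well; thus the left side has order $0$ there. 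On the right, $\zeta_{S'}(s)$ contributes a simple pole, $L(s,\tau\otimes\bar\rho)$ has no zero at $s=1$ by non-vanishing of automorphic $L$-functions on $\re(s)=1$, and the bad $\mathrm{GL}_2$ Euler factors are again non-zero there; thus the right side has order $\le-1$. This contradiction shows $F_2$ has infinitely many poles, all but finitely many of which lie in $0<\re(s)<1$, where each is a zero of $L(s,\rho)$ of multiplicity strictly larger than that of $L(s,\pi)$; hence $|S_{c,\rho}\setminus S_{c,\pi}|=\infty$.

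The main obstacle is not any single computation but checking that Theorem~\ref{mostgeneral}, built for automorphic $L$-functions, genuinely applies to the non-automorphic $L(s,\pi)$: one passes through local Langlands to realise $\pi$ adelically, uses Brauer to obtain the analytic continuation and functional equations of all its character twists together with the Artin version of the stability of $\gamma$-factors, and confirms the membership $F_2\in\gselp$ and condition (G5). Once $\tau$ has been produced, the twist-by-$\bar\rho$ comparison of pole orders at $s=1$ is short and insensitive to the shape of $\tau$.
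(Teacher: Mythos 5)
Your proof is correct and follows essentially the same route as the paper: deduce that $F_2$ has finitely many poles from the counting hypothesis, invoke Theorem~\ref{mostgeneral} to get $L_S(s,\pi)=L_S(s,\rho)L_S(s,\tau)$, then twist by $\bar\rho$ and compare pole orders at $s=1$ using the irreducibility (hence non-triviality) of $\pi\otimes\bar\rho$. You simply spell out more carefully than the paper the verification that $F_2\in\gselp$ (via local Langlands, Brauer induction, and the Artin analogue of the Jacquet--Shalika stability) and the step where bad Euler factors are checked to be non-zero at $s=1$, both of which the paper compresses into a single sentence.
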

\begin{proof} Artin representations give rise to global admissible representations of $\gln$. The irreducibility of $\pi$ implies that $F_2$ satisfies (G5), and the other hypotheses of Theorem \ref{mostgeneral} follow easily from the well known properties of Artin $L$-functions. It follows from Theorem \ref{mostgeneral} that for some finite set of places $S$, we have
\begin{equation}\label{artinaut}
L_{S}(s,\pi)=L_S(\rho)L_S(s,\tau).
\end{equation}
Twisting both sides of the equation above by $\rho^{-1}$, we see that the left hand side is holomorphic on the line $\re(s)=1$, while the right hand side has a pole at $s=1$, yielding a contradiction.
\end{proof}
The corollary above generalises the work of Hochfilzer and Oliver in \cite{HoOl2022} where $\rho$ is taken to be the trivial character and $\pi$ is in a somewhat restricted subset of representations. One reason we are able to handle arbitrary characters $\rho$ is our use of the stability results of Theorem \ref{stability}.

Corollary \ref{artin} indicates a possible strategy for showing the irreducibility of certain Artin, or more generally, Galois representations, since the reducibility of a representation implies that its $L$-function factorises as a product Galois $L$-functions. However, I was unable to come up with any concrete non-trivial examples where this strategy might be exploited.

Corollary \ref{artin} permits the following generalisation.
\begin{corollary} Let $\pi$ be an irreducible Artin representation of dimension $n$ and let $\rho\in \autntwoall$. If $G_2$ has at most finitely many zeros, $\vert S_{c,\rho}\setminus S_{c,\pi}\vert=\infty$.
\end{corollary}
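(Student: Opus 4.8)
The plan is to follow the proof of Corollary \ref{artin} verbatim up to the application of Theorem \ref{mostgeneral}, and then to replace the final twist by a Dirichlet character with a Rankin--Selberg comparison. First I would check that the pair $(\pi,\rho)$ satisfies the hypotheses of Theorem \ref{mostgeneral} once $F_2$ is assumed to have finitely many poles. The Artin representation $\pi$ gives rise to a global irreducible admissible representation of $\gln$ whose central character $\det\pi$ is a finite order Hecke character, hence automorphic; by Brauer's theorem $L(s,\pi\times\chi)$ extends meromorphically to $\C$ and satisfies a functional equation of the form \eqref{autlfneqn} for every primitive Dirichlet character $\chi$. Since $n\ge 3$ and $\pi$ is irreducible, $\pi\otimes\chi$ is a nontrivial irreducible Artin representation, so $L(s,\pi\times\chi)$ is holomorphic on $\re(s)=1$; together with the nonvanishing of $L(s,\rho\times\chi)$ on that line (Theorem \ref{nonvanishthm}) this gives (G5) for $F_2$. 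The archimedean factor of $L(s,\pi)$ is a product of factors $\Gamma_{\R}(s+a)$ with $a\in\{0,1\}$ and $\Gamma_{\C}(s+b)$, so (G3) holds, its non-archimedean parameters are roots of unity so (G4) holds with $\theta=0$, and (P2), (G1), (P4) are standard; combined with $L(s,\rho)\in\gselp$ (Theorem \ref{lautoing}) this shows $F_2\in\gselp$ whenever $F_2$ has at most finitely many poles.

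Next, suppose $G_2$ has finitely many zeros but, for contradiction, that $F_2$ has finitely many poles. By the previous step Theorem \ref{mostgeneral} applies and produces $\tau\in\auttwoall$ and a finite set of places $S$, which I enlarge so that it contains every place where $\pi$, $\rho$ or $\tau$ ramifies, with
\[
L_S(s,\pi)=L_S(s,\rho)L_S(s,\tau)=L_S(s,\rho\boxplus\tau).
\]
Comparing Euler factors at the unramified primes gives $\pi_p\cong\rho_p\boxplus\tau_p$ as unramified representations for all $p\notin S$, hence the local Rankin--Selberg factors agree as well and
\[
L_S(s,\pi\times\tilde{\pi})=L_S(s,\rho\times\tilde{\rho})\,L_S(s,\rho\times\tilde{\tau})\,L_S(s,\tau\times\tilde{\rho})\,L_S(s,\tau\times\tilde{\tau}).
\]

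Then I would compare the order of the pole at $s=1$ on the two sides. On the left, $L(s,\pi\times\tilde{\pi})$ is the Artin $L$-function of $\pi\otimes\tilde{\pi}$, in which the trivial representation occurs with multiplicity $\langle\pi,\pi\rangle=1$ (here one uses the standard fact that the pole order of an Artin $L$-function at $s=1$ equals the multiplicity of the trivial constituent, which follows from Brauer induction and the non-vanishing of Hecke $L$-functions at $s=1$); since the Euler factors removed in passing to $L_S$ are finite and nonzero at $s=1$, $L_S(s,\pi\times\tilde{\pi})$ has a pole of order exactly $1$ at $s=1$. On the right, $L_S(s,\rho\times\tilde{\rho})$ has a simple pole at $s=1$ because $\rho$ is unitary cuspidal, $L_S(s,\tau\times\tilde{\tau})$ has a pole of order at least $1$ at $s=1$ (whether $\tau$ is cuspidal or an isobaric sum of unitary Hecke characters, exactly as used in the proof of Theorem \ref{zerothm}), and all four factors are nonzero at $s=1$ by the non-vanishing of automorphic $L$-functions on $\re(s)=1$; hence the right-hand side has a pole of order at least $2$ at $s=1$, which is impossible. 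Therefore $F_2$ has infinitely many poles, and exactly as in Corollary \ref{zerosincs} the poles with $\re(s)\le 0$ or $\re(s)\ge 1$ lie among the finitely many zeros of $G_2$, so $|S_{c,\rho}\setminus S_{c,\pi}|=\infty$.

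The one genuinely new point, and the main thing to get right, is that the left-hand side of the final identity must stay entirely on the Artin side: one cannot twist $L_S(s,\pi)$ by $\tilde{\rho}$ as in Corollary \ref{artin}, because $\pi$ is not known to be automorphic and the Rankin--Selberg $L$-function of a Galois representation against a cuspidal automorphic representation of $\glntwo$ is not available unconditionally. Pairing $\pi$ with $\tilde{\pi}$ keeps the left side a genuine Artin tensor-product $L$-function, whose order at $s=1$ is governed by the multiplicity of the trivial constituent, and the irreducibility of $\pi$ forces that multiplicity to be exactly $1$ --- too small to match the contribution of $L_S(s,\rho\times\tilde\rho)L_S(s,\tau\times\tilde\tau)$ on the right.
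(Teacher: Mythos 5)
Your proof matches the paper's: both apply Theorem \ref{mostgeneral} to obtain $L_S(s,\pi)=L_S(s,\rho)L_S(s,\tau)$, then tensor with $\tilde{\pi}$ and compare pole orders at $s=1$, using irreducibility of $\pi$ to get a simple pole on the left against an at-least-double pole on the right. Your rendering of the cross terms as $L_S(s,\rho\times\tilde{\tau})L_S(s,\tau\times\tilde{\rho})$ is slightly more careful than the paper's abbreviation $L_S(s,\rho\times\tilde{\tau})^2$, but since the two factors are complex conjugates this does not change the pole count.
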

\begin{proof} 
 The proof is almost identical to the previous case. As before we get
 \[
L_S(s,\pi)=L_S(s,\rho)L(s,\tau)
 \]
by using Theorem \ref{mostgeneral}.  Thus, $\pi=\rho\boxplus\tau$
outside of a finite number of places $S$. We twist both sides above by $\tilde{\pi}$ to obtain
\[
L_S(s,\pi\times\tilde{\pi})=L_S(s,\rho\times\tilde{\rho})
L_S(s,\rho\times\tilde{\tau})^2L_S(s,\tau\times\tilde{\tau}).
\]
The irreducibility of $\pi$ shows that $L_S(s,\pi\times\tilde{\pi})$ has a simple pole at $s=1$, while the right hand side has at least a double pole at $s=1$
\end{proof}

As a variant on the theme above, we can assume that $\rho$ is also an Artin representation of dimension $n-2$ but that $G_2(s)=(2\pi)^{-s}\Gamma(s)$. Then, $\tau$ will arise from a weight $1$ form, and under some additional hypotheses, will correspond to an Artin representation of dimension $2$. But this will contradict the irreducibility of $\pi$. Other variations involving the $L$-functions of (compatible families of) Galois representations can also be formulated.

We record a second situation in which Theorem \ref{zerothm} does not suffice, but where Theorem \ref{mostgeneral} works.
\begin{corollary} \label{tensprodzeros} Let $\pi_i\in {\mathcal A}_{n_i}^{\circ}$, $1\le i\le 3$, and suppose that $n_1n_2=n_3+2$ and $n_2n_3>6$ . Assume that $\pi_2$ is tempered at all places, and that there is at least one prime $p$ where $\pi_1$, $\pi_2$ and $\pi_3$ are all unramified, and such that at least one of the local parameters $\alpha_{\pi_3,p}$ (as in \eqref{nonarchlfn}) is not a parameter of $\pi_{1,p}$ or $\pi_{2,p}$. If $G_2(s)=L(s,\pi_{1,\infty}\times\pi_{2,\infty})/L(s,\pi_{3,\infty})$ has at most finitely many zeros, then $\vert S_{c,\pi_3}\setminus S_{c,\pi_1\times\pi_2}\vert=\infty$.
\end{corollary}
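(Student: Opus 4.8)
The plan is to apply Theorem~\ref{mostgeneral} with $\rho:=\pi_3$ and with $\pi:=\otimes_v^{\prime}(\pi_{1,v}\boxtimes\pi_{2,v})$, the admissible representation of $\mathrm{GL}_{n_1n_2}(\aq)$ obtained from the local Langlands tensor products, so that $L(s,\pi\times\chi)=L(s,\pi_1\times\pi_2\times\chi)$ for every Dirichlet character $\chi$, the central character $\omega_{\pi_1}^{n_2}\omega_{\pi_2}^{n_1}$ is automorphic, and (since $n_1n_2-2=n_3$) the quotient $F_2(s)=L(s,\pi_1\times\pi_2)/L(s,\pi_3)$ has degree $2$. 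Because $|S_{c,\pi_3}\setminus S_{c,\pi_1\times\pi_2}|$ counts exactly the poles of $F_2$ in $0<\re(s)<1$, I would argue by contradiction and assume $F_2$ has only finitely many poles there.

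First I would check that this assumption makes $F_2$ have only finitely many poles in all of $\mathbb{C}$, so that $F_2\in\gselp$. Indeed $F_2$ is holomorphic for $\re(s)>1$; on $\re(s)=1$ it has at most the pole of $L(s,\pi_1\times\pi_2)$ at $s=1$, since $L(1+it,\pi_3)\ne0$ by Theorem~\ref{nonvanishthm}; and for $\re(s)\le0$ every pole of $F_2$ is a zero of $G_2$, because $\Phi_2(s)=G_2(s)F_2(s)$ is holomorphic there by the functional equation while $G_2$ has finitely many zeros by hypothesis. So $F_2$ satisfies (P2). By Theorem~\ref{rsthm}, $L(s,\pi_1\times\pi_2\times\chi)$ satisfies (G1), (P4), (G5) and a functional equation of the form~\eqref{autlfneqn}, and because $\pi_2$ is tempered at every place Remark~\ref{rsntrem} supplies (G3) and (G4) as well; combined with $L(s,\pi_3\times\chi)\in\gselp$ (Theorem~\ref{lautoing}) and its non-vanishing on $\re(s)=1$, this gives $F_2\in\gselp$, (G5) for $F_2$, and, together with the hypothesis on $G_2$, all the remaining hypotheses of Theorem~\ref{mostgeneral}.

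Theorem~\ref{mostgeneral} then yields $\tau\in\auttwoall$ and a finite set $S$ of finite places with $L_S(s,\pi_1\times\pi_2)=L_S(s,\pi_3)\,L_S(s,\tau)$; inspecting its proof, $S$ may be taken inside $S_{\pi_1}\cup S_{\pi_2}\cup S_{\pi_3}$, so the prime $p$ furnished by the hypothesis lies outside $S$. Matching the Euler factors at $p$ and using that $L_S(s,\tau)$ is a genuine degree-$2$ Euler product forces $\tau_p$ to be unramified, with Satake parameters $\beta_1,\beta_2$ say, and yields the multiset identity
\[
\{\alpha_{\pi_1,p,i}\alpha_{\pi_2,p,j}:1\le i\le n_1,\ 1\le j\le n_2\}=\{\alpha_{\pi_3,p,k}:1\le k\le n_3\}\sqcup\{\beta_1,\beta_2\}.
\]
In particular every Satake parameter of $\pi_{3,p}$ is a product $\alpha_{\pi_1,p,i}\alpha_{\pi_2,p,j}$, hence occurs among the parameters of the Rankin--Selberg factor $\pi_{1,p}\times\pi_{2,p}$, contradicting the hypothesis that at least one parameter of $\pi_{3,p}$ is not of this form. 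This forces $F_2$ to have infinitely many poles in $0<\re(s)<1$, i.e. $|S_{c,\pi_3}\setminus S_{c,\pi_1\times\pi_2}|=\infty$.

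The hard part will be the middle step: checking that the $\gselp$-package genuinely transfers to the non-automorphic Rankin--Selberg quotient $F_2$. The decisive input there is the temperedness of $\pi_2$ at all places, which through Remark~\ref{rsntrem} is precisely what provides (G3) and (G4) for the numerator; the numerical conditions, including $n_2n_3>6$, serve only to exclude the few degenerate low-degree configurations in which the cited results, or Theorem~\ref{mostgeneral} itself, would not apply verbatim.
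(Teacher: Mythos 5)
Your proof takes the same route as the paper's: argue by contradiction, apply Theorem~\ref{mostgeneral} to $F_2(s)=L(s,\pi_1\times\pi_2)/L(s,\pi_3)$ to obtain $L_S(s,\pi_1\times\pi_2)=L_S(s,\pi_3)L_S(s,\tau)$ for some $\tau\in\auttwoall$, then contradict this identity by matching Euler factors at the distinguished unramified prime $p$ lying outside $S$. The intermediate verifications you supply (passage from finitely many critical-strip poles to finitely many poles in $\C$, membership of $F_2$ in $\gselp$ via Theorem~\ref{rsthm} and the temperedness of $\pi_2$ through Remark~\ref{rsntrem}, and the multiset bookkeeping at $p$) are correct and merely make explicit what the paper's terse proof leaves implicit.
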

\begin{proof}
Note that if $n_1n_2\le 6$, the corollary already follows from Theorem \ref{zerothm}, since $L(s,\pi_1\times\pi_2)$ is known to be automorphic in these cases (\cite{Ramak00}, \cite{KiSh02a}). As we have recorded in Theorem \ref{rsthm}, $L(s,\pi_1\times\pi_2)$ satisfies all the hypotheses of Theorem \ref{mostgeneral}. Thus, if the quotient $F_2(s)$ has only finitely many poles in $\C$, there is finite set $S$ of primes such that
\begin{equation}\label{nonautlincomb}
L_S(s,\pi_1\times\pi_2)= L_S(s,\pi_3)L_S(s,\tau)
\end{equation}
with $\tau\in \auttwoall$. One checks easily in the proof of Theorem \ref{mostgeneral} that we choose $S$ so that $p\notin S$.
Since one of the local 
parameters $\alpha_{\pi_3,p}$ (as in \eqref{nonarchlfn}) is not a parameter of $\pi_{1,p}$ or $\pi_{2,p}$, we know that 
$L_S(s,\pi_3)L_S(s,\tau)\ne L_S(s,\pi_1\times\pi_2)$, giving a contradiction.
\end{proof}

In view of Remark \ref{exttwontrem}, we can formulate similar corollaries involving the exterior and symmetric square $L$-functions.
\section{An example when $G_2$ has infinitely many zeros}\label{gtwoinfty}

When $\pi,\rho\in \aut$ and $G_2$ has infinitely many zeros, we know that $F_2$ has infinitely many poles in $\C$, but our arguments thus far have not been able to show that infinitely many of these lie in the critical strip $0<\re(s)<1$. In this section, we will be able to obtain this more subtle conclusion for one class of pairs.

Let $\tau\in \auttwo$ such that $\tau$ is not CM and $\tau_{\infty}$ is a (limit of) discrete series representation of weight $k\in \N$. In this
case, we have
\begin{equation}\label{lsymtwoinfty}
L(s,{\sym}^2\tau_{\infty})=\Gamma_{\R}\left(\frac{s+k+1}{2}\right)\Gamma_{\R}\left(\frac{s+1}{2}\right)\Gamma_{\R}\left(\frac{s+k-1}{2}\right),
\end{equation}
while $L(s,{\sym}^0\tau_{\infty})=L(s,1_{\infty})=\Gamma(s/2)$, where $1$ denotes the trivial representation of $\glone$. If $k$ is odd,
\[
G_2(s)=\frac{L(s,{\sym}^2\tau_{\infty})}{L(s,{\sym}^0\tau_{\infty})}=[(s+k-2)(s+k-3)\cdots s]\Gamma_{\R}\left(\frac{s+k+1}{2}\right)\Gamma_{\R}\left(\frac{s+1}{2}\right)
\]
has only finitely many zeros, so this is a special case of Theorem \ref{zerothm}. When $k$ is even, $G_2$ has infinitely many zeros, and does not fit into our usual template. Nonetheless, we can proceed as follows.

As before, we denote the quotient $L(s,{\sym}^2\tau)/\zeta(s)$ by $F_2(s)$. Recall that ${\sym}^2\tau$ is cuspidal by the work of Gelbart and Jacquet in \cite{GeJa76}.
Suppose $F_2$ has finitely many poles. Then, by the arguments we have seen before, $F_2\in \gsel$ and it satisfies (G5). By Theorem \ref{bookermodthm}, $F_2(s,\chi)$ has no poles in the strip $0<\re(s)\le 1$ for every non-principal character $\chi$.

Let $\chi_1\ne 1$ be an even primitive Dirichlet character and let $F_3(s)=L(s,\chi_1)F_2(s)$. We let $G_3(s)=L(s,\chi_{1,\infty})G_2(s)=L(s,{\sym}^2\tau_{\infty})$.
Since $L(s,\chi)$ is entire, $F_3(s,\chi_1\times\chi)$ has no poles in the strip $0<\re(s)\le 1$ for any primitive Dirichlet character 
$\chi\ne 1$. Outside of the critical strip, the poles of $F_3(s,\chi_1\times\chi)$ are necessarily among the zeros of
$G_2(s,\chi_1\times\chi)$. It follows that $G_3(s,\chi_1\times\chi)F_3(s,\chi_1\times\chi)$ is entire.

Recall that $F_{2,p}(s)=(1-\alpha_p^{2}p^{-s})^{-1}(1-\alpha_p^{-2}p^{-s})^{-1}$ at every place where $\tau$ is unramified.
After twisting by a primitive Dirichlet character $\chi_0$ that is sufficiently highly ramified at the places $R$ where $\tau$ and $\chi_1$ are ramified, and unramified outside $R$, we can assume that $(\chi_{1,p}\times\chi_{0,p})\boxplus(\tau_p\times\chi_{0,p})$ is a ramified principal series representation, and that  $F_{3,p}(s,\chi_0)=1$ for $p\in R$. For any prime $p$, we set
\[
\pi_{3,p}=(\chi_{1,p}\times\chi_{0,p})\boxplus(\tau_p\times\chi_{0,p}).
\]
The factor $G_3(s,\chi_{0,\infty})$ determines an irreducible admissible representation $\pi_{3,\infty}$ of $\glthreer$. 
Let $\pi_3=\otimes_v\pi_v^{\prime}$. If $\chi$ is a Dirichlet character unramified at all the places of $R$, and $T$ is the 
set of places where it is ramified, $\pi_3\times\chi$ is unramified outside of $S=R\bigcup T$. It follows that $\pi_3\times\chi$ is an irreducible admissible representation of $\glthree$. Clearly $\Lambda(s,\pi_3\times \chi)=G_3(s,\chi)F_3(s,\chi)$ is entire of order $1$ and satisfies a functional equation of the form \eqref{autlfneqn}. Thus, all the hypotheses of the converse theorem of Jacquet, Piatetski-Shapiro and Shalika for $\glthree$ in  \cite{JPSS792} are satisfied, so $\pi_3$ is quasi automorphic. This leads to the equation
\[
L_S(s,{\sym}^2\tau)L_S(s,\chi_0)=\zeta_S(s)L_S(s,\pi_3^{\prime})
\]
for some $\pi_3^{\prime}\in \autthreeall$. The right hand side above obviously has a pole at $s=1$, while the left hand side is holomorphic at $s=1$, giving a contradiction. 
We have thus proved
\begin{theorem}\label{symsquarebyzeta} Let $\tau\in \auttwo$. Assume that $\tau$ is not CM and that $\tau_{\infty}$ is a (limit of) discrete series representation. Then $L(s,{\sym}^2\tau)/\zeta(s)$ has infinitely many poles in the strip $0<\re(s)<1$.
\end{theorem}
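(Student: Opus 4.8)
The plan is to argue by contradiction: assume that $F_2(s) := L(s,\sym^2\tau)/\zeta(s)$ has only finitely many poles, and then manufacture a $\glthree$ automorphic object whose $L$-function cannot behave as required. Exactly as in the proof of Theorem \ref{mostgeneral}, the finiteness of the poles forces $F_2 \in \gsel$; since $\sym^2\tau$ is cuspidal by Gelbart--Jacquet \cite{GeJa76} and $\zeta(1+it) \ne 0$, the quotient is holomorphic on $\re(s) = 1$, so $F_2$ satisfies (G5). Theorem \ref{bookermodthm} then applies and gives that $F_2(s,\chi)$ is holomorphic in $0 < \re(s) \le 1$ for every non-principal Dirichlet character $\chi$.

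The difficulty is that when the weight $k$ of $\tau_{\infty}$ is even, $G_2(s) = L(s,\sym^2\tau_{\infty})/\Gamma(s/2)$ has infinitely many zeros coming from the gamma function in the denominator, so $G_2$ is not of $\glthree$-type and no converse theorem can be applied to $F_2$ directly. The key device is to multiply through by $L(s,\chi_1)$ for an auxiliary \emph{even} primitive character $\chi_1 \ne 1$: setting $F_3(s) = L(s,\chi_1)F_2(s)$, the archimedean factor becomes $G_3(s) = L(s,\chi_{1,\infty})G_2(s) = L(s,\sym^2\tau_{\infty})$ up to a harmless exponential, which is a genuine degree-$3$ archimedean factor with no gamma functions in the denominator and, having no zeros whatsoever, causes no trouble outside the critical strip. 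Since $L(s,\chi_1)$ is entire, $F_3(s,\chi)$ inherits from $F_2$ the absence of poles in $0 < \re(s) \le 1$, and then the functional equation forces $\Phi_3(s,\chi) := G_3(s,\chi)F_3(s,\chi)$ to be entire for every primitive $\chi$ by the same argument as in Corollary \ref{phischientire}.

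Next I would carry out the $\varepsilon$-factor bookkeeping of Section \ref{secepsilon}: twist by a primitive character $\chi_0$ sufficiently highly ramified at the finite set $R$ of primes where $\tau$ and $\chi_1$ ramify (and unramified elsewhere), invoking the Jacquet--Shalika stability result Theorem \ref{stability} and the $\varepsilon$-factor formula \eqref{epschip} for characters, so that $F_{3,p}(s,\chi_0) = 1$ for $p \in R$ and the twisted functional equations acquire the correct shape with the degree-$3$ factor $L(s,\sym^2\tau_{\infty})$. One then assembles the admissible representation $\pi_3 = \otimes_v \pi_v'$ with $\pi_{3,p} = (\chi_{1,p}\times\chi_{0,p}) \boxplus (\tau_p\times\chi_{0,p})$ at finite $p$, and $\pi_{3,\infty}$ the irreducible admissible representation of $\glthreer$ whose $L$-factor is $G_3(s,\chi_{0,\infty})$. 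For any Dirichlet character $\chi$ unramified at $R$, the twist $\pi_3\times\chi$ is unramified outside $S := R \cup S_\chi$, and $\Lambda(s,\pi_3\times\chi) = G_3(s,\chi)F_3(s,\chi)$ is entire of order $1$ and satisfies a functional equation of the form \eqref{autlfneqn}, with nebentypus $\psi = \omega_{\sym^2\tau}\,\chi_0^2$ (a Dirichlet character up to a unitary twist) satisfying $\psi(-1) = (-1)^k$ precisely because $\chi_1$ was chosen even.

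Finally I would invoke the converse theorem of Jacquet, Piatetski-Shapiro and Shalika for $\glthree$ in \cite{JPSS792}, all of whose hypotheses have now been verified, to conclude that $\pi_3$ is quasi-automorphic: there is $\pi_3' \in \autthreeall$ with
\[
L_S(s,\sym^2\tau)L_S(s,\chi_0) = \zeta_S(s)L_S(s,\pi_3').
\]
The right-hand side has a pole at $s=1$ (from $\zeta_S$, while $L_S(s,\pi_3')$ is entire as $\pi_3'$ is cuspidal), whereas the left-hand side is holomorphic at $s=1$ since $\sym^2\tau$ is cuspidal and $\chi_0$ is non-principal --- a contradiction, which proves the theorem. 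I expect the main obstacle to be the verification that the twisted completed functions $\Lambda(s,\pi_3\times\chi)$ are entire of order $1$; this is exactly where the strengthened Booker-type Theorem \ref{bookermodthm} (valid in $\gsel$ and allowing gamma quotients) together with the cancellation trick via $L(s,\chi_1)$ do the real work, and a secondary point of care is keeping the central characters and parity consistent with the converse theorem's hypotheses.
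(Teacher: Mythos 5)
Your proposal follows essentially the same route as the paper: assume finitely many poles, deduce $F_2\in\gsel$ with (G5), apply Theorem~\ref{bookermodthm}, multiply by $L(s,\chi_1)$ for an auxiliary \emph{even} primitive $\chi_1\neq 1$ so that the archimedean factor becomes the genuine degree-$3$ factor $L(s,\sym^2\tau_\infty)$, run the stability/$\varepsilon$-factor bookkeeping with a highly ramified $\chi_0$, assemble $\pi_3$, invoke the $\glthree$ converse theorem of Jacquet, Piatetski-Shapiro and Shalika from \cite{JPSS792}, and contradict via the pole at $s=1$. One small point of confusion worth correcting: the reason $\chi_1$ must be chosen even is the one you already state first --- so that $L(s,\chi_{1,\infty})=\Gamma_{\R}(s)$ cancels the $\Gamma_{\R}(s)$ coming from $\zeta$ in the denominator, eliminating the gamma factor in the denominator of $G_2$; the later remark that it makes the nebentypus satisfy $\psi(-1)=(-1)^k$ is imported from the $\gltwog$-type Weil converse theorem (Theorem~\ref{weilplus}) and is not a hypothesis of the $\glthree$ converse theorem being applied here, so it plays no role.
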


\bibliographystyle{alpha}
\bibliography{../../../../Bibtex/master2023}

\begin{thebibliography}{{Rag}20}

\bibitem[BB13]{BlBr2013}
Valentin Blomer and Farrell Brumley.
\newblock The role of the {R}amanujan conjecture in analytic number theory.
\newblock {\em Bull. Amer. Math. Soc. (N.S.)}, 50(2):267--320, 2013.

\bibitem[BK14]{BoKr2014}
Andrew~R. Booker and M.~Krishnamurthy.
\newblock Weil's converse theorem with poles.
\newblock {\em Int. Math. Res. Not. IMRN}, (19):5328--5339, 2014.

\bibitem[BL22]{BFL2022}
Farmer~Michael Booker, Andrew~R. and Min Lee.
\newblock An extension of {V}enkatesh's converse theorem to the {S}elberg
  class.
\newblock {\em arXiv:2207.0045}, 2022.

\bibitem[BLS20]{BLS2020}
Andrew~R. Booker, Min Lee, and Andreas Str\"{o}mbergsson.
\newblock Twist-minimal trace formulas and the {S}elberg eigenvalue conjecture.
\newblock {\em J. Lond. Math. Soc. (2)}, 102(3):1067--1134, 2020.

\bibitem[Boo03]{Booker2003}
Andrew~R. Booker.
\newblock Poles of {A}rtin {$L$}-functions and the strong {A}rtin conjecture.
\newblock {\em Ann. of Math. (2)}, 158(3):1089--1098, 2003.

\bibitem[Boo15]{Booker2015}
Andrew~R. Booker.
\newblock {$L$}-functions as distributions.
\newblock {\em Math. Ann.}, 363(1-2):423--454, 2015.

\bibitem[BR20]{BaRa20}
R~Balasubramanian and Ravi Raghunathan.
\newblock Beyond the extended {S}elberg class: {$1<d<2$}.
\newblock {\em arXiv:2011.07525}, 2020.
\newblock Submitted for publication.

\bibitem[BS07]{BoSt2007}
Andrew~R. Booker and Andreas Str\"{o}mbergsson.
\newblock Numerical computations with the trace formula and the {S}elberg
  eigenvalue conjecture.
\newblock {\em J. Reine Angew. Math.}, 607:113--161, 2007.

\bibitem[CN62]{KCRN62}
K.~Chandrasekharan and R.~Narasimhan.
\newblock Functional equations with multiple gamma factors and the average
  order of arithmetical functions.
\newblock {\em Ann. of Math. (2)}, 76:93--136, 1962.

\bibitem[CST17]{CoShTs2017}
J.~W. Cogdell, F.~Shahidi, and T.-L. Tsai.
\newblock Local {L}anglands correspondence for {${\rm GL}_n$} and the exterior
  and symmetric square {$\varepsilon$}-factors.
\newblock {\em Duke Math. J.}, 166(11):2053--2132, 2017.

\bibitem[GJ72]{GoJa72}
Roger Godement and Herv{\'e} Jacquet.
\newblock {\em Zeta functions of simple algebras}.
\newblock Springer-Verlag, Berlin, 1972.
\newblock Lecture Notes in Mathematics, Vol. 260.

\bibitem[GJ76]{GeJa76}
Stephen Gelbart and Herv{\'e} Jacquet.
\newblock A relation between automorphic forms on {${\rm GL}(2)$} and {${\rm
  GL}(3)$}.
\newblock {\em Proc. Nat. Acad. Sci. U.S.A.}, 73(10):3348--3350, 1976.

\bibitem[GS01]{GeSh01}
Stephen Gelbart and Freydoon Shahidi.
\newblock Boundedness of automorphic {$L$}-functions in vertical strips.
\newblock {\em J. Amer. Math. Soc.}, 14(1):79--107 (electronic), 2001.

\bibitem[HO22]{HoOl2022}
Leonhard Hochfilzer and Thomas Oliver.
\newblock Ratios of {A}rtin {$L$}-functions.
\newblock {\em J. Number Theory}, 236:1--40, 2022.

\bibitem[HT01]{HaTa01}
Michael Harris and Richard Taylor.
\newblock {\em The geometry and cohomology of some simple {S}himura varieties},
  volume 151 of {\em Annals of Mathematics Studies}.
\newblock Princeton University Press, Princeton, NJ, 2001.
\newblock With an appendix by Vladimir G. Berkovich.

\bibitem[JL70]{JaLa70}
H.~Jacquet and R.~P. Langlands.
\newblock {\em Automorphic forms on {${\rm GL}(2)$}}.
\newblock Springer-Verlag, Berlin, 1970.
\newblock Lecture Notes in Mathematics, Vol. 114.

\bibitem[JPSS79]{JPSS792}
Herv{\'e} Jacquet, Ilja~Iosifovitch Piatetski-Shapiro, and Joseph Shalika.
\newblock Automorphic forms on {${\rm GL}(3)$}. {II}.
\newblock {\em Ann. of Math. (2)}, 109(1):169--212, 1979.

\bibitem[JPSS83]{JPSS83}
H.~Jacquet, I.~I. Piatetskii-Shapiro, and J.~A. Shalika.
\newblock Rankin-{S}elberg convolutions.
\newblock {\em Amer. J. Math.}, 105(2):367--464, 1983.

\bibitem[JS81a]{JaSh812}
H.~Jacquet and J.~A. Shalika.
\newblock On {E}uler products and the classification of automorphic forms.
  {II}.
\newblock {\em Amer. J. Math.}, 103(4):777--815, 1981.

\bibitem[JS81b]{JaSh811}
H.~Jacquet and J.~A. Shalika.
\newblock On {E}uler products and the classification of automorphic
  representations. {I}.
\newblock {\em Amer. J. Math.}, 103(3):499--558, 1981.

\bibitem[JS85]{JaSh85}
Herv{\'e} Jacquet and Joseph Shalika.
\newblock A lemma on highly ramified {$\epsilon$}-factors.
\newblock {\em Math. Ann.}, 271(3):319--332, 1985.

\bibitem[JS77]{JaSh1976}
Herv\'{e} Jacquet and Joseph~A. Shalika.
\newblock A non-vanishing theorem for zeta functions of {${\rm GL}_{n}$}.
\newblock {\em Invent. Math.}, 38(1):1--16, 1976/77.

\bibitem[Kim99]{Kim99}
Henry~H. Kim.
\newblock Langlands-{S}hahidi method and poles of automorphic {$L$}-functions:
  application to exterior square {$L$}-functions.
\newblock {\em Canad. J. Math.}, 51(4):835--849, 1999.

\bibitem[Kim03]{Kim03}
Henry~H. Kim.
\newblock Functoriality for the exterior square of {${\rm GL}\sb 4$} and the
  symmetric fourth of {${\rm GL}\sb 2$}.
\newblock {\em J. Amer. Math. Soc.}, 16(1):139--183 (electronic), 2003.
\newblock With appendix 1 by Dinakar Ramakrishnan and appendix 2 by Kim and
  Peter Sarnak.

\bibitem[KMP06]{KMP06}
Jerzy Kaczorowski, Giuseppe Molteni, and Alberto Perelli.
\newblock Linear independence of {$L$}-functions.
\newblock {\em Forum Math.}, 18(1):1--7, 2006.

\bibitem[KP99]{KaPe99}
Jerzy Kaczorowski and Alberto Perelli.
\newblock On the structure of the {S}elberg class. {I}. {$0\leq d\leq 1$}.
\newblock {\em Acta Math.}, 182(2):207--241, 1999.

\bibitem[KP03]{KaPe03}
Jerzy Kaczorowski and Alberto Perelli.
\newblock Factorization in the extended {S}elberg class.
\newblock {\em Funct. Approx. Comment. Math.}, 31:109--117, 2003.

\bibitem[KP11]{KaPe11}
Jerzy Kaczorowski and Alberto Perelli.
\newblock On the structure of the {S}elberg class, {VII}: {$1<d<2$}.
\newblock {\em Ann. of Math. (2)}, 173(3):1397--1441, 2011.

\bibitem[KS02]{KiSh02a}
Henry~H. Kim and Freydoon Shahidi.
\newblock Functorial products for {${\rm GL}\sb 2\times{\rm GL}\sb 3$} and the
  symmetric cube for {${\rm GL}\sb 2$}.
\newblock {\em Ann. of Math. (2)}, 155(3):837--893, 2002.
\newblock With an appendix by Colin J. Bushnell and Guy Henniart.

\bibitem[LRS95]{LRS1995}
W.~Luo, Z.~Rudnick, and P.~Sarnak.
\newblock On {S}elberg's eigenvalue conjecture.
\newblock {\em Geom. Funct. Anal.}, 5(2):387--401, 1995.

\bibitem[LRS99]{LRS1999}
Wenzhi Luo, Ze{\'e}v Rudnick, and Peter Sarnak.
\newblock On the generalized {R}amanujan conjecture for {${\rm GL}(n)$}.
\newblock In {\em Automorphic forms, automorphic representations, and
  arithmetic (Fort Worth, TX, 1996)}, volume~66 of {\em Proc. Sympos. Pure
  Math.}, pages 301--310. Amer. Math. Soc., Providence, RI, 1999.

\bibitem[MW89]{MoWa89}
C.~M{\oe}glin and J.-L. Waldspurger.
\newblock Le spectre r\'esiduel de {${\rm GL}(n)$}.
\newblock {\em Ann. Sci. \'Ecole Norm. Sup. (4)}, 22(4):605--674, 1989.

\bibitem[NO20]{NeOl2020}
Michael Neururer and Thomas Oliver.
\newblock Weil's converse theorem for {M}aass forms and cancellation of zeros.
\newblock {\em Acta Arith.}, 196(4):387--422, 2020.

\bibitem[NT21]{NeTh21}
James Newton and Jack~A. Thorne.
\newblock Symmetric power functoriality for holomorphic modular forms, {II}.
\newblock {\em Publ. Math. Inst. Hautes \'{E}tudes Sci.}, 134:117--152, 2021.

\bibitem[P{\v{S}}75]{IIPS75}
I.~I. Pjateckij-{\v{S}}apiro.
\newblock On the {W}eil-{J}acquet-{L}anglands theorem.
\newblock In {\em Lie groups and their representations (Proc. Summer School,
  Bolyai J\'anos Math. Soc., Budapest, 1971)}, pages 583--595. Halsted, New
  York, 1975.

\bibitem[Rag99]{Ragh99}
Ravi Raghunathan.
\newblock A comparison of zeros of {$L$}-functions.
\newblock {\em Math. Res. Lett.}, 6(2):155--167, 1999.

\bibitem[Rag10]{Ragh10}
R.~Raghunathan.
\newblock On {$L$}-functions with poles satisfying {M}aass's functional
  equation.
\newblock {\em J. Number Theory}, 130(6):1255--1273, 2010.

\bibitem[{Rag}20]{Ragh20}
Ravi {Raghunathan}.
\newblock Beyond the extended {S}elberg class: {$d_F\le 1$}.
\newblock {\em {arXiv:2005.11381}}, 2020.
\newblock Submitted for publication.

\bibitem[Ram00]{Ramak00}
Dinakar Ramakrishnan.
\newblock Modularity of the {R}ankin-{S}elberg {$L$}-series, and multiplicity
  one for {${\rm SL}(2)$}.
\newblock {\em Ann. of Math. (2)}, 152(1):45--111, 2000.

\bibitem[Sha81]{Shahidi81}
Freydoon Shahidi.
\newblock On certain {$L$}-functions.
\newblock {\em Amer. J. Math.}, 103(2):297--355, 1981.

\bibitem[Sha88]{Shahidi88}
Freydoon Shahidi.
\newblock On the {R}amanujan conjecture and finiteness of poles for certain
  {$L$}-functions.
\newblock {\em Ann. of Math. (2)}, 127(3):547--584, 1988.

\bibitem[Sha90]{Shahidi90}
Freydoon Shahidi.
\newblock A proof of {L}anglands' conjecture on {P}lancherel measures;
  complementary series for {$p$}-adic groups.
\newblock {\em Ann. of Math. (2)}, 132(2):273--330, 1990.

\bibitem[Sha97]{Shahidi97}
Freydoon Shahidi.
\newblock On non-vanishing of twisted symmetric and exterior square
  {$L$}-functions for {${\rm GL}(n)$}.
\newblock Number Special Issue, pages 311--322. 1997.
\newblock Olga Taussky-Todd: in memoriam.

\bibitem[Tak15]{Takeda2015}
Shuichiro Takeda.
\newblock On a certain metaplectic {E}isenstein series and the twisted
  symmetric square {$L$}-function.
\newblock {\em Math. Z.}, 281(1-2):103--157, 2015.

\bibitem[Wei67]{Weil67}
Andr{\'e} Weil.
\newblock \"{U}ber die {B}estimmung {D}irichletscher {R}eihen durch
  {F}unktionalgleichungen.
\newblock {\em Math. Ann.}, 168:149--156, 1967.

\end{thebibliography}

\end{document}